\documentclass{patmorin}
\usepackage{pat,float,subcaption}
\usepackage[T1]{fontenc}
\usepackage[utf8]{inputenc}
\usepackage{mathtools}
\usepackage{thmtools}
\usepackage{thm-restate}
\usepackage{bm}
\usepackage{stmaryrd}
\newcommand{\sm}{\smallsetminus}
\usepackage[inline]{enumitem}
\usepackage{soul}
\usepackage[normalem]{ulem}

\crefname{p}{}{}
\creflabelformat{p}{#2(#1)#3}

\usepackage{todonotes}
\newenvironment{clmproof}{\begin{proof}[Proof of Claim:]}{\end{proof}}

\usepackage[longnamesfirst,numbers,sort&compress]{natbib}

\newcommand{\ZZ}{\mathbb{Z}}
\newcommand{\Acal}{\mathcal{A}}

\newcommand{\RR}{\mathbb R}

\newcommand{\SSS}{\mathbb{S}}

\newcommand{\calV}{\mathcal{V}}

\DeclareMathOperator{\init}{init}
\DeclareMathOperator{\ter}{ter}

\DeclareMathOperator{\xx}{\mathbf{x}}
\DeclareMathOperator{\yy}{\mathbf{y}}
\DeclareMathOperator{\supp}{\mathsf{supp}}

\definecolor{OliveGreen}{RGB}{120, 140, 47}

\title{\MakeUppercase{$2$-dimensional unit vector flows}%
  \thanks{This research was partly funded by NSERC.}}

\author{Hussein Houdrouge%
  \thanks{School of Computer Science, Carleton University.},\quad
  Bobby Miraftab\footnotemark[2],\quad
  and Pat Morin\footnotemark[2]}

\date{}

\begin{document}
\maketitle

\begin{abstract}
We study \emph{$2$-dimensional unit vector flows} on graphs, that is, nowhere-zero flows that assign to each oriented edge a unit vector in $\RR^{3}$.
We give a new geometric characterization of $\SSS^{2}$-flows on cubic graphs.
We also prove that the class of cubic graphs admitting an $\SSS^{2}$-flow is closed under a natural composition operation, which yields further constructions; in particular, \emph{blowing up} a vertex into a triangle preserves the existence of an $\SSS^{2}$-flow.
Our second contribution is algebraic: we extend the rank-based approach of [SIAM J. Discrete Math., 29 (2015), pp.~2166--2178] from $\SSS^{1}$-flows to $\SSS^{2}$-flows.
More precisely, we show that if an $\SSS^{2}$-flow $\varphi$ satisfies $\operatorname{rank}(S_{\mathbb{Q}}(\varphi))\le 2$ and $S_{\mathbb{Q}}(\varphi)$ is odd-coordinate-free, then the graph admits a nowhere-zero $4$-flow.
\end{abstract}

\section{Introduction}
In this paper, an oriented graph $\vec{G}$ is an undirected graph $G = (V, E)$ with two maps $\init\colon E \longrightarrow V$ and $\ter\colon E \longrightarrow V$ that assign an initial and a terminal vertex for every edge $e \in E(G)$. The set $\mathdefin{E(\vec{G})}$ denotes the set of oriented edges, that is $E(\vec{G}) \coloneqq  \{ (e, \init(e), \ter(e)) \; |\; e \in E(G)\}$. 
We will simply refer to an element $(e, \init(e), \ter(e)) \in E(\vec{G})$ by $e$. 
Sometimes, we refer to an oriented graph $\vec{G}$ as an \defin{orientation} of the graph $G$. 
For $X \subseteq V(G)$, we define $ E^+(X)$ to be the set of all edges $e$ whose $\init(e) \in X$ and $\ter(e) \in V(G)\sm X$. 
The set $E^-(X) $ is defined as $ E^+(V(G) \sm X)$. 
For a vertex $v \in V(G)$, we write $E^+(v)$ instead of $E^+(\{v\})$ and $ E^-(v)$ instead of $E^-(\{v\})$.

Let $ \Acal$ be an additive abelian group, we define a \defin{$\Acal$-circulation} over an oriented graph $\vec{G}$ as function $ \varphi \colon E(\vec{G}) \rightarrow \Acal $ that satisfies Kirchhoff's current Law (KCL):
\begin{equation}\label{KL}
\sum_{e \in E^{+}(v)} \varphi(e) = \sum_{e \in E^{-}(v)} \varphi(e)
\end{equation}
for every $v \in V(G)$. 
An \defin{$\Acal$-flow} for an oriented graph $\vec{G}$ is a nowhere zero $\Acal$-circulation $\varphi$ of $\vec{G}$, that is $\varphi(e) \neq 0$ for every $e \in E(\vec{G})$. Note that setting $\varphi(e) = -\varphi(e)$ after swapping $\init(e)$ and $\ter(e)$ will preserve \cref{KL}. 
Thus, if an oriented graph $\vec{G}$ has an $\Acal$-flow, then any oriented graph with $G$ as underling graph has an $\Acal$-flow. Therefore, an undirected graph $G$ has an $\Acal$-flow if and only if one of its orientations $\vec{G}$ has an $\Acal$-flow.

For a positive integer $k$, a $\ZZ$-flow $\varphi$ for a graph $G$ that satisfies $0 < |\varphi(e)| < k$ for every $e \in E(\vec{G})$ for an orientation $\vec{G}$ is called a \defin{$k$-flow}. In $1950$, Tutte proved that having a $k$-flow is equivalent to having a $\mathbb Z_k$-flow, see \cite[Theorem 6.3.3]{diestel2017graph}. 
In \cite{DBLP:journals/jct/Seymour81a}, Seymour proves that every bridgeless graph has a $6$-flow.
Later, alternative proofs were discovered by \citet{new_6_flow} and \citet{new_6_flow_2}. However, the characterisation of graphs with $k$-flows for $k<6$ remains open. For $k = 5$, Tutte conjectured the following.

\begin{conj}[Tutte's $5$-flow conjecture]\label{tutte_five}
Every bridgeless graph has a $5$-flow.
\end{conj}

In the definition of $\Acal$-flow, we can replace the abelian group $\Acal$ by a vector space $\mathbb{V}$ to define a \defin{vector flow}. Since $\mathbb{V}$ is an abelian group with respect to the addition operation with identity element being the zero vector, denoted by $\mathbf{0}$, vector flow is well defined. In this work, we are interested in vector spaces over $\mathbb{R}$ equipped with the Euclidean norm $\|\cdot\|$. Let $ r \ge 2 $ be a real number and $d$ a positive integer. 
An \defin{$(r, d)$-flow} for a graph $G$ is an $\mathbb{R}^d$-flow such that $\|\varphi(e)\|$ lies in the interval $[1, r - 1]$ for every edge $e \in E(\vec{G})$ for any orientation $\vec{G}$. For $r = 2$, the $(2, d)$-flow is called \defin{unit vector flow}. Since every element in $\SSS^{d - 1}$, the unit sphere of dimension $d - 1$, is a unit vector in $\R^{d}$, we refer to $(2, d)$-flow by \defin{$\SSS^{d - 1}$-flow}. 

In order to approach \cref{tutte_five}, \citet{bobby_conjecture} proposes the following two conjectures that imply \cref{tutte_five}.  
\begin{conj}\label{Bobby_conj}
Every bridgeless cubic graph has an $\SSS^2$-flow.
\end{conj}
\begin{conj}\label{Bobby_complement_conj}
There exists a map $q:\SSS^2 \rightarrow \{\pm 1, \pm 2,  \pm 3, \pm 4\} $ such that the antipodal points of $ \SSS^2 $ receive opposite values, and any three points which are equidistant on a great circle have values which sum to zero.
\end{conj}

Vector flows and in particular  \Cref{Bobby_conj} has been studied by several authors, \cite{VecAndInt,thom2,NZF}.
In this paper, we study $\SSS^2$-flow from both geometric and algebraic perspectives.
More precisely in order to prove or disprove \cref{Bobby_conj}, we provide a new geometric characterisation for $\SSS^2$-flows. We show that a cubic graph $G$ has an $\SSS^2$-flow is equivalent to having \defin{equiangular $\SSS^2$-immersion}. Informally, an equiangular $\SSS^2$-immersion is a mapping of $V(G)$ to points in $\SSS^2$, and a mapping of $E(G)$ to arcs of great circles. The corresponding arcs of edges that are incident to the same vertex are at angle of $2\pi/3$ apart from each other. Precisely, we prove the following.

\begin{thm}\label{thm:S2}
A cubic graph $ G $ admits an equiangular $ \SSS^2 $-immersion if and only if $ G $ admits an $ \mathbb{S}^2 $-flow.
\end{thm}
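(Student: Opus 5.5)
The proof rests on one observation: at a vertex of degree three, the flow condition forces the three unit vectors to be coplanar and pairwise at angle $2\pi/3$. The normal to that plane is the natural point of $\SSS^2$ at which to place the vertex. I assume the definition of an equiangular $\SSS^2$-immersion records, for every edge $e=uv$, an actual arc of a great circle from the image $x_u$ of $u$ to the image $x_v$ of $v$, rather than just its endpoints. At each endpoint $v$ it then has a well-defined unit tangent vector $t_{v,e}\in T_{x_v}\SSS^2$ pointing into the arc. Equiangularity says the three tangent vectors at each vertex are pairwise at angle $2\pi/3$, i.e.\ they sum to $\mathbf 0$.

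To avoid sign errors I first make the orientation bookkeeping symmetric. For a flow $\varphi$ and an edge $e$ at $v$, put $\psi_v(e)=\varphi(e)$ if $v=\init(e)$ and $\psi_v(e)=-\varphi(e)$ otherwise. Then \cref{KL} at $v$ reads $\sum_{e\ni v}\psi_v(e)=\mathbf 0$, and $\psi_u(e)=-\psi_v(e)$ for $e=uv$. The key computation concerns a point moving at unit speed along the great circle with unit normal $n$ in a fixed rotational sense: its velocity at a point $p$ is $n\times p$, up to a global sign.

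\textbf{From flow to immersion.}
\begin{enumerate*}[label=(\arabic*)]
\item Since three unit vectors summing to zero lie in a plane at mutual angle $2\pi/3$, let $x_v$ be a unit normal of that plane at $v$ (either sign).
\item For $e=uv$, the vector $\psi_v(e)$ is orthogonal to both $x_u$ and $x_v$. Hence both points lie on the great circle $C_e$ with normal $\psi_v(e)$. Map $e$ to the arc of $C_e$ that leaves $x_v$ with tangent $t_{v,e}=x_v\times\psi_v(e)$ and runs to $x_u$ (the full circle if $x_u=x_v$).
\item Check consistency: by the velocity formula, traversing this arc backwards from $x_u$ gives tangent $x_u\times(-\psi_v(e))=x_u\times\psi_u(e)$. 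So the recipe is symmetric in $u$ and $v$.
\item At $v$, the map $w\mapsto x_v\times w$ is a rotation by $\pi/2$ of the plane $x_v^{\perp}$. It therefore carries the three $\psi_v(e)$, which are pairwise at $2\pi/3$, to three tangent vectors also pairwise at $2\pi/3$.
\end{enumerate*}

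\textbf{From immersion to flow.} Conversely, given an immersion, set $\psi_v(e)=t_{v,e}\times x_v$. This is a unit vector, since $t_{v,e}\perp x_v$, and it is a unit normal of the great circle carrying $e$. By linearity of the cross product, $\sum_{e\ni v}\psi_v(e)=\big(\sum_e t_{v,e}\big)\times x_v=\mathbf 0$. The same velocity computation, now read along the arc from $x_v$ to $x_u$, shows $\psi_u(e)=-\psi_v(e)$: the two ends see the same oriented circle traversed in opposite directions. Setting $\varphi(e)=\psi_{\init(e)}(e)$ then gives a nowhere-zero unit-vector circulation, i.e.\ an $\SSS^2$-flow.

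\textbf{Main obstacle.} The difficulty is not conceptual but lies in the sign and orientation bookkeeping of the velocity identity along an arc. It must work uniformly for arcs longer than $\pi$, for loops, and for the degenerate cases $x_u=\pm x_v$, where the endpoints do not determine the great circle. This is exactly why the immersion must carry the arc, or equivalently its tangent data, as part of the structure. I would handle it by parametrizing the arc as $\gamma(s)=\cos s\,x_v+\sin s\,t_{v,e}$ for $s\in[0,\ell]$. Then $\gamma(s)\times\gamma'(s)=x_v\times t_{v,e}$ is constant, and at $s=\ell$ this constant equals $x_u\times\big(-t_{u,e}\big)$. That gives $\psi_u(e)=t_{u,e}\times x_u=-\psi_v(e)$ directly, for every arc length $\ell$, including the degenerate cases.
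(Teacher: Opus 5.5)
Your proof is correct and follows the paper's argument. Vertices go to a unit normal of the plane spanned by the three flow vectors (the paper fixes the sign as $(2/\sqrt{3})\,\varphi(vv_1)\times\varphi(vv_2)$). Each edge goes to an arc of the great circle orthogonal to its flow value, with direction fixed by the orientation, so the tangent at a vertex is the flow value rotated by $\pi/2$ about that vertex's image. Conversely, the flow value is read off as the normal of the arc's great circle.

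Your symmetric bookkeeping via $\psi_v$ and the conserved quantity $\gamma(s)\times\gamma'(s)$ makes explicit the end-to-end sign consistency. The paper handles this only through its ``winds counterclockwise'' convention and the reversal observations.
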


In addition to this characterisation, we illustrate the use of \cref{thm:S2}. We provide an equiangular $\SSS^2$-immersion for some families including Petersen graph and for generalised variations of Petersen graph. 
In \cref{thm::bipartite_immersion}, we show that bipartite graphs are exactly the graphs that admit an equiangular $\SSS^2$-immersion to one or two points in $\SSS^2$. 
Finally, we show how to construct graphs with an $\SSS^2$-flow from two cubic graphs that have an $\SSS^2$-flow, see \cref{thm::injection_thm}.

Another interesting question in this area is the following:

\begin{problem}\label{problem1}
For a non-negative integer $d$, characterize $\SSS^d$-flows for which if $G$ admits an $\SSS^d$-flow with certain properties, then $G$ also admits a nowhere-zero integer $k$-flow for some $k \leq 5$.
\end{problem}

\citet{VecAndInt} studied the preceding question for $\SSS^1$-flow
\begin{lem}{\rm \cite[Theorem 1.10]{VecAndInt}}
If a graph $G$ admits a vector $\SSS^1$-ﬂow with rank at most two, then $G$ admits a nowhere-zero integer $3$-ﬂow.
\end{lem}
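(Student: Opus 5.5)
We interpret "rank" as the rank over $\mathbb{Q}$ of the $\mathbb{Q}$-span $S_{\mathbb{Q}}(\varphi)$ of the flow values $\{\varphi(e) : e\in E(\vec G)\}\subseteq \RR^2$. The plan is to turn a rank-$\le 2$ unit vector $\SSS^1$-flow into an integer flow, and then to use the unit-length constraint to bound the integer values.

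\emph{Step 1 (reduction to a $\ZZ^2$-flow).} Suppose $S_{\mathbb{Q}}(\varphi)$ has rank $r\le 2$. Choose a $\mathbb{Q}$-basis $b_1,\dots,b_r$ of $S_{\mathbb{Q}}(\varphi)$ and write $\varphi(e)=\sum_i q_i(e)b_i$ with $q_i(e)\in\mathbb{Q}$. Kirchhoff's law \cref{KL} holds coordinatewise, by linear independence over $\mathbb{Q}$. After clearing denominators, $q=(q_1,q_2)$ becomes a nowhere-zero $\ZZ^2$-circulation, and it is nowhere zero because $\varphi(e)\neq\mathbf 0$. If $r=1$, all values are rational multiples of a single vector. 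Unit length then forces $\varphi(e)=\pm b$, so $\pm1$ is already a $\ZZ$-flow, i.e.\ a $2$-flow, and hence a $3$-flow.

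\emph{Step 2 (rank exactly $2$).} Apply an $\RR$-linear map $T$ sending $b_1,b_2$ to the standard basis; this preserves \cref{KL}. The values $\varphi(e)$ are unit vectors lying in the $\mathbb{Q}$-lattice spanned by $b_1,b_2$. The key observation is that \cref{KL} is preserved by any $\mathbb{Q}$-linear functional. Following Tutte's classical argument that a $\ZZ^2$-flow with values in a suitable finite set yields a $\ZZ_3$-flow, we would seek a map $f\colon \{\varphi(e)\}\to \ZZ_3\sm\{0\}$ that is the restriction of a group homomorphism $S_{\mathbb{Z}}\to\ZZ_3$, where $S_{\mathbb{Z}}$ is the $\ZZ$-span of the values, and that is nonzero on every flow value. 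Composing $\varphi$ with $f$ then gives a $\ZZ_3$-flow, and Tutte's theorem converts it into a $3$-flow.

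\emph{Step 3 (main obstacle).} The crux is showing that such a homomorphism exists, i.e.\ that no flow value lies in the kernel. The unit vectors in a rank-$2$ $\mathbb{Q}$-space spanned by two unit vectors $u,v$ with $u\cdot v=c$ satisfy a quadratic norm form $x^2+2cxy+y^2=1$. I would analyse this conic. If $c$ is irrational, the only rational points should be $\pm u$ and $\pm v$. Choosing $f(u)=1$ and $f(v)=1$ over $\ZZ_3$ then works unless $u\pm v$ also occurs; here I would use the sixth-root-of-unity structure ($u+v$ a unit vector forces $c=-1/2$). If $c$ is rational, there may be infinitely many solutions, and I would instead try $p$-adic valuations: pick a homomorphism to $\ZZ_3$ derived from the denominators/parities of coordinates, so that the norm equation rules out both coordinates being divisible by $3$. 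I expect this step, namely ensuring $f$ is nowhere zero on all the infinitely many possible unit values, to be the hardest. If mod $3$ fails for some $c$, I would fall back on choosing among several homomorphisms, using the fact that finitely many edges are involved, so a generic homomorphism avoids finitely many kernels.
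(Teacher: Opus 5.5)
\textbf{The proposal proves a different statement, because it misreads ``rank.''} The paper quotes this lemma from \cite{VecAndInt} without proof, but it fixes the terminology. The rank of $\varphi$ is $\dim S(\varphi)$, where $S(\varphi)\subseteq\RR^b$ (or $S_{\mathbb{Q}}(\varphi)\subseteq\mathbb{Q}^b$) is spanned by the balanced vectors $\bm{\epsilon}(v)$, $v\in V(G)$. It counts the independent relations $\sum_i\epsilon_i(v)\mathbf{v}_i=\mathbf 0$ that the vertices impose on the value classes $E_1,\dots,E_b$. It is not the $\mathbb{Q}$-rank of the set of flow values in $\RR^2$, which is what you assume.

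The two notions are unrelated. Decompose an even graph into cycles $C_1,\dots,C_m$, orient each cyclically, and give every edge of $C_j$ its own generic unit vector $\mathbf u_j$. Then every $\bm\epsilon(v)=\mathbf 0$, so the rank in the paper's sense is $0$, while the values span a $\mathbb{Q}$-space of rank $m$. Conversely, your hypothesis allows balanced vectors spanning a space of dimension up to $b-2$. So your Step~1 reduction to a $\ZZ^2$-flow is not available under the actual hypothesis.

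\textbf{Even under your own reading, Step~3 is unproved and the strategy cannot work.} Let $\Lambda\cong\ZZ^2$ be the $\ZZ$-span of the values. Every homomorphism $\Lambda\to\ZZ_3$ factors through $\Lambda/3\Lambda\cong\ZZ_3^2$. So there are only four possible kernels and no ``generic'' choice. Worse, a flow value can lie in $3\Lambda$.

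Here is a concrete instance:
\begin{itemize}
\item Take unit vectors $\mathbf b_1,\mathbf b_2$ with $\mathbf b_1\cdot\mathbf b_2=1/3$, and set $\mathbf u:=\tfrac23\mathbf b_1-\mathbf b_2$. One checks $\|\mathbf u\|=1$.
\item Then $\mathbf b_1=3(\mathbf b_1-\mathbf b_2-\mathbf u)\in 3\Lambda$, so every homomorphism to $\ZZ_3$ kills $\mathbf b_1$.
\item This is realized by an $8$-regular bipartite graph with parts $A,B$. Each $a\in A$ has two out-edges valued $\mathbf b_1$, three in-edges valued $\mathbf b_2$ and three in-edges valued $\mathbf u$; vertices of $B$ have the mirrored pattern. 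Kirchhoff's law holds because $2\mathbf b_1=3\mathbf b_2+3\mathbf u$, and the balanced vectors are $\pm(2,-3,-3)$.
\end{itemize}
The values span a rank-$2$ $\mathbb{Q}$-space, so your hypothesis holds. The graph is even, so the conclusion of the lemma holds too, yet your homomorphism $f$ cannot exist.

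The same flow has rank $1$ in the paper's sense. Even so, no nowhere-zero $\ZZ_3$-labelling $g$ of the value classes annihilates $(2,-3,-3)$, since the condition reduces to $2g_1\equiv 0 \pmod 3$. So a correct proof of the lemma cannot simply assign one group element per flow value, as a naive mod-$3$ analogue of the mod-$2$ argument behind \cref{rank_odd_cor} would. It must exploit $\dim S(\varphi)\le 2$ more flexibly.
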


Here we explain the terminology of vector flows with rank.
Suppose $\calV := \{\varphi(e)\mid e\in E(\vec{G})\} = \{{\bm v}_{1},\dots, {\bm v}_{b}\}$. Then for every ${\bm v}_i \in \calV$, we let $E_i$ to be the set of edges with flow value equal to ${\bm v}_i$. For a vertex $v \in V(G)$, we set $\epsilon_i(v)\coloneqq  |E^+(v) \cap E_i| - |E^-(v) \cap E_i|$. As in \cite{VecAndInt}, we define the \defin{balanced vector} $\bm{\epsilon}(v)$ of a vertex $v \in V(G)$ as $\boldsymbol{\epsilon}(v) = \left( \epsilon_1(v), \dots, \epsilon_b(v) \right)$. We define  $\mathdefin{S_{\mathbb{Q}}(\varphi)}$ as the span of $\{\bm{\epsilon}(v)| v \in V(G)\}$ over the field of rational numbers $\mathbb{Q}$. We say $S_{\mathbb{Q}}(\varphi)$ is \defin{odd-coordinate-free} if it contains no integer vector with exactly one odd coordinate. 

We now are ready to present the extension of \cite[Theorem 1.10]{VecAndInt} to $\SSS^2$-flows.

\begin{thm}\label{rank_odd_cor}
If a graph $G$ admits an $\SSS^2$-flow such that $S_{\mathbb{Q}}(\varphi)$ is odd-coordinate-free of rank at most $2$, then $G$ admits a $4$-flow.
\end{thm}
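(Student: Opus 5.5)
The plan is to build an integer circulation from a linear functional on $S_{\mathbb{Q}}(\varphi)$, following the strategy of \cite[Theorem 1.10]{VecAndInt}. The starting point is that the flow $\varphi$ gives a relation. For each vertex $v$, Kirchhoff's law reads $\sum_i \epsilon_i(v)\,\bm v_i=\mathbf 0$. Hence the $3\times b$ real matrix $M$ whose columns are $\bm v_1,\dots,\bm v_b$ annihilates every balanced vector, so $S_{\mathbb{Q}}(\varphi)\subseteq \ker M$.

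Conversely, suppose $\bm w\in\mathbb{Z}^b$ satisfies $\bm w\cdot\bm\epsilon(v)=0$ for every $v$. Then setting $\psi(e)=w_i$ for $e\in E_i$ defines a $\mathbb{Z}$-circulation, because $\sum_{e\in E^+(v)}\psi(e)-\sum_{e\in E^-(v)}\psi(e)=\bm w\cdot\bm\epsilon(v)$. So the task reduces to finding $\bm w\in S_{\mathbb{Q}}(\varphi)^{\perp}\cap\mathbb{Z}^b$ with all coordinates nonzero and $|w_i|\le 3$. Equivalently, by Tutte's theorem, it suffices to find $\bm w$ that is nowhere zero modulo $4$, or a pair of $\mathbb{Z}_2$-circulations whose supports cover $E$, giving a $\mathbb{Z}_2\times\mathbb{Z}_2$-flow.

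The case analysis runs on the rank $k$ of $S=S_{\mathbb{Q}}(\varphi)$.
\begin{itemize}
\item If $k=0$, every $\bm\epsilon(v)=0$, so any constant $\bm w=(1,\dots,1)$ works.
\item If $k\le2$, write $S$ as the row space of a $k\times b$ integer matrix $A$ whose columns $\bm a_i\in\mathbb{Z}^k$ are all nonzero. A zero column $\bm a_i$ would let us set $w_i$ freely, so it is harmless.
\end{itemize}
For $k\le 2$ we seek $\bm w$ with $A\bm w=0$ that is nowhere zero mod $4$. The natural choice is a $\mathbb{Z}_2\times\mathbb{Z}_2$-flow: choose two vectors $\bm x,\bm y\in\{0,1\}^b$ with $A\bm x\equiv A\bm y\equiv 0 \pmod 2$ and $\bm x\vee\bm y=\bm 1$. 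Both must be genuine $\mathbb{Z}_2$-circulations, which requires $\bm\epsilon(v)\cdot\bm x\equiv0\pmod 2$ for all $v$; this is a condition on the mod-$2$ reduction of $S$, and pure $\mathbb{Q}$-span information is too weak for it. This is exactly where odd-coordinate-freeness enters. Since $S$ contains no integer vector with exactly one odd coordinate, the mod-$2$ image $\bar S\subseteq\mathbb{F}_2^b$ of $S\cap\mathbb{Z}^b$ contains no standard basis vector $e_i$. Then $\bar S^{\perp}$ is not contained in any coordinate hyperplane $\{x_i=0\}$, so for each $i$ some $\bm x\in\bar S^\perp$ has $x_i=1$.

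Because $\dim\bar S\le 2$, I would show that two elements of $\bar S^\perp$ suffice to cover all coordinates. The columns $\bar{\bm a}_i\in\mathbb{F}_2^2$ are nonzero, so they take at most three values: $(1,0)$, $(0,1)$, $(1,1)$. A vector $\bm x\in\mathbb{F}_2^b$ orthogonal to both rows can be viewed as choosing a subset of the columns that sums to zero in $\mathbb{F}_2^2$. We must cover every column with two such subsets, and we can pair columns of equal value, or use a triple $(1,0)+(0,1)+(1,1)$. The only obstruction is a column value occurring an odd number of times with no other types available to balance it. A parity/case check shows that this obstruction would force some combination of rows to be $e_i$ mod $2$, contradicting odd-coordinate-freeness.

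The main obstacle is lifting: $\bar S$ must be defined from $S\cap\mathbb{Z}^b$, not from the integer rows $\bm\epsilon(v)$ alone. The mod-$2$ span of the $\bm\epsilon(v)$ is contained in $\bar S$, so orthogonality to $\bar S$ implies the circulation condition. I would therefore use $\bar S$ throughout, taking a $\mathbb{Z}$-basis of the lattice $S\cap\mathbb{Z}^b$ for $A$.

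Finally, I would note where unit length is used. It guarantees that the columns $\bm a_i$ are nonzero, through the argument of \cite{VecAndInt}, and it rules out a degenerate edge class that is forced to be zero. I expect this part to mirror the $\SSS^1$ case, with rank $\le2$ replacing the parity facts used there.
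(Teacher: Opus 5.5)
Your architecture is the paper's. You give every edge of the class $E_i$ the same value $g_i=(x_i,y_i)\in\mathbb{Z}_2\times\mathbb{Z}_2$, so $\mathbf{x},\mathbf{y}$ must be orthogonal mod $2$ to all balanced vectors. You then show the relevant mod-$2$ space has dimension at most $2$ and, by odd-coordinate-freeness, contains no $\mathbf{e}_i$. Finally you cover all coordinates by two vectors of its orthogonal complement.

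There is one small difference in setup. You work with $\bar S$, the reduction of the lattice $S\cap\mathbb{Z}^b$, so excluding $\mathbf{e}_i$ is immediate and $\dim\bar S\le 2$ follows from a $\mathbb{Z}$-basis of rank $k$. The paper instead uses the smaller space $S'$ spanned by the $\pmb{\epsilon}'(v)$, and needs a lifting argument to exclude $\mathbf{e}_j$. Both are valid, since $S'\subseteq\bar S$.

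The genuine divergence is the covering step, Lemma~\ref{Zauber_lem}. The paper argues that failure would make $V$ a union of at most two proper subspaces $V_{\mathbf{w}}$, one for each odd-weight $\mathbf{w}\in V^\perp$. You propose a direct analysis of the columns of a $2\times b$ parity-check matrix over $\mathbb{F}_2$. That route is more elementary and characterizes exactly when such a cover exists.

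However, the one concrete thing you say about that check is wrong. A column type occurring an odd number $\ge 3$ of times is not an obstruction, because the supports of $\mathbf{x}$ and $\mathbf{y}$ may overlap. For example, three columns equal to $(1,0)$ are covered by $\{1,2\}$ and $\{2,3\}$. In that configuration $\bar S$ contains no $\mathbf{e}_i$, so the implication you announce fails as stated.

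The correct check goes as follows.
\begin{enumerate}
\item Let $n_p,n_q,n_r$ count the columns equal to the three nonzero vectors of $\mathbb{F}_2^2$. Zero columns go into $\mathbf{x}$ freely, so you need not assume the columns are nonzero.
\item A set of columns sums to zero iff its three counts have the same parity.
\item For $\mathbf{c}\neq\mathbf{0}$, the row $\mathbf{c}^{T}\bar A$ is supported exactly on the columns of the two types outside $\mathbf{c}^{\perp}$. Hence $\mathbf{e}_i\notin\bar S$ for all $i$ is equivalent to $n_p+n_q\neq 1$, $n_p+n_r\neq 1$ and $n_q+n_r\neq 1$.
\item If all three counts have the same parity, take $\mathbf{x}=\mathbf{1}$.
\item Otherwise, let $\mathbf{x}$ omit one column of each odd type. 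Let $\mathbf{y}$ consist of these one or two leftover columns, together with either one column of each remaining type or a second column of each leftover type. The three inequalities guarantee one of these options exists.
\end{enumerate}
With this filled in, your proof is complete.

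Drop your final paragraph about unit length. Nothing about $\varphi$ beyond the partition of $E(G)$ into the classes $E_i$ is used; the hypotheses on $S_{\mathbb{Q}}(\varphi)$ carry the whole argument.
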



\section{Preliminaries}\label{prelim}
First, we start with recalling some basic facts about $\SSS^0$-flows.
\begin{obs}\label{obs:S0}
For a graph $G$, the following statements are equivalent:
\begin{compactenum}
\item[{\rm (i)}] $G$ has an $\SSS^0$-flow.
\item[{\rm (ii)}] $G$ has a $\Z_2$-flow.
\item[{\rm (iii)}] All vertices of $G$ have even degree.
\end{compactenum}
\end{obs}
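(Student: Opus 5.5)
We prove the cycle of implications (i)$\Rightarrow$(iii)$\Rightarrow$(ii)$\Rightarrow$(i). The two facts we use throughout are that $\SSS^0=\{-1,+1\}\subset\RR$, and that, as noted in the introduction, reversing an edge and negating its value preserves \cref{KL}. So we may fix any orientation $\vec G$ at the start.

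\emph{(i)$\Rightarrow$(iii).} Let $\varphi$ be an $\SSS^0$-flow, so $\varphi(e)\in\{\pm1\}$ for every edge. For a vertex $v$, \cref{KL} gives
\[
\sum_{e\in E^+(v)}\varphi(e)-\sum_{e\in E^-(v)}\varphi(e)=0 .
\]
This is a sum of $\deg(v)$ terms, each equal to $\pm1$. Reducing modulo $2$, every term contributes $1$, so $\deg(v)\equiv 0\pmod 2$.

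Two remarks handle the edge cases. A loop contributes nothing to $E^\pm(v)$ under the definition, but it adds $2$ to the degree, so parity is unaffected. Parallel edges cause no difficulty, since each edge is counted separately.

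\emph{(iii)$\Rightarrow$(ii).} Assume every vertex has even degree and set $\varphi\equiv 1\in\ZZ_2$. Then at each vertex the difference of the two sides of \cref{KL} equals $\deg(v)\bmod 2$. This holds because $1=-1$ in $\ZZ_2$, so in- and out-edges count alike; loops again contribute $0\equiv2$. Hence $\varphi$ is a circulation, and it is nowhere zero.

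\emph{(ii)$\Rightarrow$(i).} Let $\psi$ be a $\ZZ_2$-flow. Nowhere-zero forces $\psi\equiv1$, so \cref{KL} in $\ZZ_2$ says exactly that every vertex has even degree. It therefore suffices to prove (iii)$\Rightarrow$(i), and this is the only step needing a construction.

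Each component of $G$ is an even graph, so its edge set decomposes into edge-disjoint cycles (Veblen's theorem); equivalently, each component has an Euler circuit. Orient every cycle of the decomposition cyclically and assign $+1$ to each of its edges. At each vertex, every cycle through it contributes one incoming and one outgoing edge of value $1$, so \cref{KL} holds with values in $\{1\}\subset\SSS^0$. This gives an $\SSS^0$-flow on this orientation, and by the reorientation remark it gives one on any orientation.

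The main obstacle is only this cycle-decomposition step. It is standard, either via Veblen's theorem or via Euler tours on each component, so no genuine difficulty is expected.
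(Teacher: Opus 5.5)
Your proof is correct. The paper states this as an observation without proof, and your argument is exactly the standard justification it takes for granted: parity counting at each vertex gives (i)$\Rightarrow$(iii) and (ii)$\Leftrightarrow$(iii), and for (iii)$\Rightarrow$(i) you orient along a cycle decomposition (equivalently, an Euler circuit) and put $+1$ on every edge. Your treatment of loops is also right: under the paper's definition a loop lies in neither $E^+(v)$ nor $E^-(v)$, yet it adds $2$ to $\deg(v)$, so it does not affect parity.
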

\citet{thom2} characterized when a cubic graph admits an $\SSS^1 $-flow. Let $ R_k $ denote the set of $k^{th}$ roots of unity, the set of complex numbers $z$ satisfying $z^k = 1$.
\begin{lem}[{\cite[Proposition 1]{thom2}}]\label{cubic}
Let $G$ be a graph. Then the following statements are equivalent:
\begin{compactenum}
\item[{\rm (i)}] $G$ has a $\ZZ_3$-flow.
\item[{\rm (ii)}] $G$ has an $R_3$-flow.
\end{compactenum}
Moreover, both {\rm (i)} and {\rm (ii)} imply:
\begin{compactenum}
    \item[{\rm (iii)}] $G$ has an $\SSS^1$-flow.
\end{compactenum}
If $G$ is cubic, then the three statements {\rm (i)}, {\rm (ii)}, and {\rm (iii)} are equivalent. In this case, $G$ satisfies these conditions if and only if it is bipartite.
\end{lem}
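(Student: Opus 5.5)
The plan is to prove the cycle of implications (i)$\Rightarrow$(ii)$\Rightarrow$(i), then (ii)$\Rightarrow$(iii), and finally, for cubic $G$, (iii)$\Rightarrow$ bipartite $\Rightarrow$ (ii). Throughout, write $\omega=e^{2\pi i/3}$, so $R_3=\{1,\omega,\omega^2\}$. The only $\mathbb Z$-linear relations among these values are $a+b\omega+c\omega^2=0$ with $a=b=c$. The units of $\mathbb Z[\omega]$ are exactly $\pm R_3$.

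\emph{(ii)$\Rightarrow$(i).} Let $\varphi$ be an $R_3$-flow and fix a vertex $v$. Let $n_k(v)$ be the number of edges of $E^+(v)$ with value $\omega^k$ minus the number of edges of $E^-(v)$ with that value. KCL says $n_0+n_1\omega+n_2\omega^2=0$, so $n_0=n_1=n_2$. Now set $\psi(e)=1\in\ZZ_3$ for every edge. The net flow of $\psi$ at $v$ is $n_0+n_1+n_2=3n_0\equiv 0 \pmod 3$. Hence $\psi$ is a $\ZZ_3$-flow. The point is that $1+1+1=0$ in $\ZZ_3$ with every summand nonzero.

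\emph{(i)$\Rightarrow$(ii).} This is the step I expect to be the main obstacle, because an additive homomorphism $\ZZ\to\mathbb C$ cannot send both $1$ and $2$ to unit vectors. I would proceed as follows.
\begin{enumerate}
\item Use Tutte's theorem to turn the $\ZZ_3$-flow into a $3$-flow, and reverse edges so that it becomes a positive integer flow $g$ with values in $\{1,2\}$.
\item Decompose $g=h_1+h_2$, where $h_1,h_2$ are $\{0,1\}$-valued circulations. This is the standard fact that a positive $k$-flow is a sum of $k-1$ flows with values in $\{0,1\}$. Alternatively, decompose $g$ into directed cycles and split the cycles into two groups so that every edge with $g(e)=2$ lies in one cycle of each group; this needs care, and it is the part to verify.
\item Define $\varphi=h_1+\omega h_2$. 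This is a $\mathbb C$-circulation. Its values are $1$, $\omega$, or $1+\omega=-\omega^2$, since $(h_1,h_2)(e)\in\{(1,0),(0,1),(1,1)\}$.
\item Reverse the edges with value $-\omega^2$. This yields an $R_3$-flow.
\end{enumerate}

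\emph{(ii)$\Rightarrow$(iii)} is immediate, since $R_3\subset\SSS^1$.

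\emph{Cubic case.} Let $\varphi$ be an $\SSS^1$-flow on a cubic graph, and reorient every edge at $v$ outward. The three signed unit vectors at $v$ sum to $0$, so they are at mutual angles $2\pi/3$. The two other signed values at $v$ are therefore rotations of any one of them by $\pm 2\pi/3$, up to sign. Using connectivity of each component, all edge values lie in one set $\pm\{a,a\omega,a\omega^2\}$. After multiplying by $a^{-1}$ and reorienting, every value lies in $R_3$.

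At each vertex we then have $\sum \pm\omega^{k_j}=0$. By the linear-relation fact, this forces all three signs to agree and the three $k_j$ to be distinct. So every vertex is a source or a sink, every edge joins a source to a sink, and $G$ is bipartite.

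Conversely, let $G$ be cubic and bipartite with sides $A$ and $B$. By K\"onig's theorem $G$ is $3$-edge-colourable. Orient every edge from $A$ to $B$ and give colour class $k$ the value $\omega^k$. Each vertex then sees $1+\omega+\omega^2=0$, so this is an $R_3$-flow. This closes the equivalence of (i), (ii), (iii) and bipartiteness for cubic graphs.
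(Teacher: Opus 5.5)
Your proof is correct. The paper does not prove this lemma; it is quoted from \cite[Proposition~1]{thom2}, so there is no in-paper argument to compare with. Your route is the standard one: Tutte's theorem, splitting the positive $3$-flow $g$ into two $\{0,1\}$-circulations and taking $h_1+\omega h_2$, and for cubic graphs the linear-independence and parity argument at each vertex together with a $3$-edge-colouring of the bipartite graph. The colouring step is the same perfect-matching idea the paper uses in the proof of Theorem~\ref{thm::bipartite_immersion}. Your step (ii)$\Rightarrow$(i) is just the additive map $\mathbb{Z}[\omega]\to\ZZ_3$, $a+b\omega\mapsto a+b \bmod 3$, which sends every element of $R_3$ to $1$.

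In step (i)$\Rightarrow$(ii), rely on your first justification and drop the cycle-splitting alternative. That alternative genuinely fails for some cycle decompositions: three directed cycles pairwise sharing an edge of value $2$ cannot be $2$-coloured as required. The decomposition is also easy to make self-contained:
\begin{itemize}
\item Reading KCL for $g$ modulo $2$ shows that $F:=g^{-1}(1)$ is an even subgraph.
\item Fix an Eulerian orientation of $F$. Let $F^+$ and $F^-$ be the edges of $F$ on which $\vec{G}$ agrees, respectively disagrees, with it.
\item The Eulerian orientation has zero net outflow at every vertex. Hence $\mathbf{1}_{F^+}$ and $\mathbf{1}_{F^-}$ have equal net outflow, namely half that of $\mathbf{1}_F$.
\item Since $g=2\cdot\mathbf{1}_{g^{-1}(2)}+\mathbf{1}_F$ is a circulation, $h_1:=\mathbf{1}_{g^{-1}(2)}+\mathbf{1}_{F^+}$ and $h_2:=\mathbf{1}_{g^{-1}(2)}+\mathbf{1}_{F^-}$ are $\{0,1\}$-circulations with $h_1+h_2=g$.
\end{itemize}
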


Note that most of our theorems such as \cref{thm:S2} are phrased in terms of cubic graphs. The reason we focus on cubic graphs is summarized in the following paragraph. Consider the following reduction rules on a graph $G$.
\begin{compactenum}
    
    \item[(R1)] For every $v \in V(G)$ with neighbours $w_1, \dots, w_{2k}$ for an integer $k > 1 $, replace $v$ with $v_1, \dots , v_{k}$ where each $v_i$ for $i \in \{1, \dots k\}$ has neighbours $w_{2i - 1}, w_{2i}$.
    \item[(R2)] For every vertex $v \in V(G)$ with neighbours $w_1, \ldots , w_{2k+3}$ where $k$ is a natural number, replace $v$ with vertices $v_1, \ldots, v_{k+1}$ where $v_i$ has neighbours $w_{2i - 1}$ and $w_{2i}$ for each $i \in \{1, \ldots, k\}$ and $v_{k+1}$ has neighbours $w_{2k+1}$, $w_{2k+2}$, and $w_{2k+3}$ see \Cref{fig:split_vertex}.
    \item[(R3)] For every $v \in V(G)$ of degree $2$, suppress $v$ (replacing $v$ and its two incident edges with a single edge).
\end{compactenum}

\begin{figure}[H]
    \centering
    \includegraphics[scale=0.7]{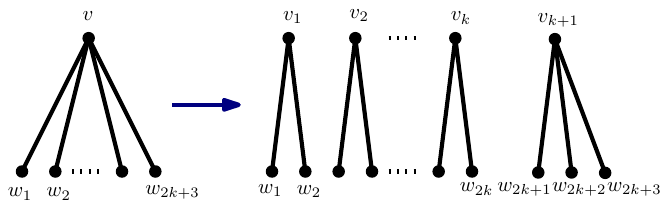}
    \caption{Splitting a vertex of odd degree.}
    \label{fig:split_vertex}
\end{figure}

\begin{lem}
Let $G$ be a graph and let $G'$ be the graph obtained from applying $(R1)$, $(R2)$, and $(R3)$ extensively on $G$, then If $G'$ has an $\Acal$-flow, then $G$ has an $\Acal$-flow. 
\end{lem}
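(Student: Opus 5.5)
The plan is to show that each single application of (R1), (R2), or (R3) preserves the property in reverse: if the graph produced by one step has an $\Acal$-flow, so does the graph before that step. The lemma then follows by induction on the number of reduction steps, since the relation ``if the later graph has an $\Acal$-flow then so does the earlier one'' composes along a finite sequence of steps. In each case we fix an orientation of the reduced graph $G'$ with flow $\varphi'$, orient $G$ compatibly, and define $\varphi$ on $G$ by copying values along a natural correspondence between edge sets.

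For (R1) and (R2) the correspondence is immediate. Every edge $vw_j$ of $G$ corresponds to exactly one edge $v_iw_j$ of $G'$, and all other edges are unchanged. Give $vw_j$ the orientation of $v_iw_j$ (with $v$ in place of $v_i$) and set $\varphi(vw_j)=\varphi'(v_iw_j)$. This is nowhere zero because $\varphi'$ is. KCL at each $w_j$ and at every untouched vertex is inherited verbatim. At $v$, the net outflow is the sum over $i$ of the net outflows at the $v_i$ in $G'$; each of these is $0$, so the total is $0$. Equivalently, $v$ is obtained by identifying the $v_i$, and identifying vertices preserves circulations.

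For (R3), suppose $v$ of degree $2$ with neighbours $a,b$ is suppressed into a single edge $f$. Orient $f$ from $a$ to $b$ in $G'$. Then orient $av$ from $a$ to $v$ and $vb$ from $v$ to $b$, and set $\varphi(av)=\varphi(vb)=\varphi'(f)$. KCL holds at $v$ since inflow equals outflow. At $a$ and $b$ the contributions are the same as those of $f$, and the values are nonzero.

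There are two small edge cases to check. First, when $a=b$, suppression creates a loop; its value cancels in KCL, and the subdivided path still works. Second, when parallel edges arise, they are handled the same way because we work with the edge set rather than with vertex pairs. I also need to note that ``extensively'' terminates, but this is not needed for the implication, which only concerns whatever finite sequence was applied.

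I expect no real obstacle beyond the bookkeeping. The only subtle point is KCL at the re-merged vertex $v$, which is resolved by summing the balance equations of the split vertices.
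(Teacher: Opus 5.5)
Your proof is correct. The paper states this lemma without proof, so there is no argument of the paper's to compare against. Your approach is the standard one and fills that gap. You undo each reduction step individually and then induct on the number of steps:
\begin{itemize}
\item For (R1) and (R2), you re-identify the split vertices $v_1,\dots,v_k$. KCL at the merged vertex is the sum of the KCL equations at the pieces, since edges running among the pieces cancel.
\item For (R3), you re-subdivide the suppressed edge and put the same value, with consistent orientation, on both halves.
\end{itemize}
Working with edges rather than neighbour lists also correctly takes care of parallel edges and loops.

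One point is worth adding. You only ever copy flow values and use nothing beyond KCL, so your argument applies verbatim to $\SSS^d$-flows, because norms are preserved. That is the case the paper actually needs in order to justify restricting attention to cubic graphs. The lemma as literally stated, for an abelian group $\Acal$ such as $\Acal=\mathbb{R}^3$, would not by itself give unit values on $G$, but your proof does.
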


It is also important to mention the following corollary for Tutte's flow polynomial.
\begin{proposition}[\cite{diestel2017graph}, Corollary 6.3.2]\label{group_order}
    Given two finite abelian groups $\Acal$ and $\Acal'$ with the same order. Then, a graph $G$ has an $\Acal$-flow if and only if $G$ has an $\Acal'$-flow. 
\end{proposition}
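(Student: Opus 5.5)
This is the standard fact that the number of nowhere-zero $\Acal$-flows depends only on $|\Acal|$, and I would prove it by exhibiting that count as a polynomial in $|\Acal|$. Fix an orientation $\vec{G}$ and write $F(G,\Acal)$ for the number of $\Acal$-flows of $\vec{G}$. By the remark in the introduction, reversing an edge and negating its value is a bijection between flows of different orientations, so $F(G,\Acal)$ does not depend on the orientation chosen.

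The main step is to show by induction on $|E(G)|$ that there is a polynomial $P_G$ with $F(G,\Acal)=P_G(|\Acal|)$ for every finite abelian group $\Acal$.

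For the base case, suppose $G$ has no edges. Then the empty function is the unique flow, so $F(G,\Acal)=1$.

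If $G$ has an edge $e$, I would use deletion–contraction, $F(G,\Acal)=F(G/e,\Acal)-F(G-e,\Acal)$, where contracting a loop is the same as deleting it. To justify this, consider the $\Acal$-circulations of $\vec{G}$ that are nonzero on every edge except possibly $e$.
\begin{itemize}
\item These circulations correspond bijectively to the flows of $G/e$. In one direction, restricting the circulation to $E\sm\{e\}$ gives a function on $G/e$ satisfying \cref{KL}, since the KCL sums at $\init(e)$ and $\ter(e)$ add up to the sum at the merged vertex. In the other direction, a flow on $G/e$ determines the value on $e$ uniquely: the net imbalance at $\init(e)$ in $G$ must be absorbed by $e$, and the condition at $\ter(e)$ then follows because the imbalances over all vertices sum to zero.
\item Among these circulations, those with value $0$ on $e$ are exactly the flows of $G-e$.
\end{itemize}
Subtracting the second set from the first leaves the flows of $G$, which gives the recurrence. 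If $e$ is a loop, the relation still holds: the value on a loop is unconstrained by KCL, so $F(G,\Acal)=(|\Acal|-1)F(G-e,\Acal)$, which agrees with the recurrence under the convention $G/e=G-e$. By induction, both terms on the right are polynomials in $|\Acal|$, hence so is $F(G,\Acal)$.

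The conclusion then follows at once. If $|\Acal|=|\Acal'|$, then $F(G,\Acal)=P_G(|\Acal|)=P_G(|\Acal'|)=F(G,\Acal')$. In particular one count is positive exactly when the other is, so $G$ has an $\Acal$-flow if and only if it has an $\Acal'$-flow.

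The only delicate point is the bijection in the first bullet, namely checking that the value on $e$ is forced and that KCL at both endpoints of $e$ holds automatically. This rests on two facts: the imbalances over all vertices sum to zero, and contraction merges the KCL equations at $\init(e)$ and $\ter(e)$ into the single equation at the new vertex.
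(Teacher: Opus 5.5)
Your argument is correct and is essentially the standard proof behind the cited result. The paper gives no proof of its own and simply cites Diestel, where Corollary~6.3.2 follows from the fact, proved by the same deletion--contraction induction, that the number of $\Acal$-flows is a polynomial in $|\Acal|$. There is one small slip: for a loop $e$, your formula $F(G,\Acal)=(|\Acal|-1)F(G-e,\Acal)$ does \emph{not} agree with $F(G/e,\Acal)-F(G-e,\Acal)$ under the convention $G/e=G-e$, since that difference is $0$. So the loop case should be stated as its own recurrence; because you derive it directly, the induction is unaffected.
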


Now, we make precise what is discussed in the introduction regarding reversing an edge orientation:
\begin{obs}\label{reversing}
    Let $\vec{G}$ be an oriented graph, let $\varphi\colon E(\vec{G})\to\Acal$ for some abelian group $\Acal$, let $\vec{G}'$ be obtained from $\vec{G}$ by reversing the orientation of some edge $e$ and let $\varphi'\colon E(\vec{G})\to\Acal$ be obtained from $\varphi$ by setting $\varphi'(e)=-\varphi(e)$.  Then $\varphi$ is a $\Acal$-flow for $\vec{G}$ if and only if $\varphi'$ is a $\Acal$-flow for $\vec{G}'$. 
\end{obs}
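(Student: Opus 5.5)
The statement to prove is \cref{reversing}, which is essentially a bookkeeping fact. The plan is to check the two defining conditions of an $\Acal$-flow, nowhere-zero and Kirchhoff's law \cref{KL}, separately and directly. The map $\varphi'$ should be read as a function on $E(\vec{G}')$: the oriented edges are the same underlying edges, and only $e$ has $\init$ and $\ter$ swapped. Because $\varphi'(f)=\varphi(f)$ for $f\neq e$ and $\varphi'(e)=-\varphi(e)$, and $x\mapsto -x$ is a bijection of $\Acal$ fixing only $0$, we get $\varphi'(f)\neq 0$ for every $f$ if and only if $\varphi(f)\neq 0$ for every $f$.

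For \cref{KL}, I would recast the law at a vertex $v$ as the single condition
\[
\sum_{f\in E^+(v)}\varphi(f)-\sum_{f\in E^-(v)}\varphi(f)=0 .
\]
I would then compare this net outflow at each vertex in $\vec{G}$ and in $\vec{G}'$. Write $u=\init(e)$ and $w=\ter(e)$. At a vertex $v\notin\{u,w\}$, the sets $E^{\pm}(v)$ and the relevant flow values are unchanged, so the two sums agree.

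At $u$ (assuming $u\neq w$), the edge $e$ moves from $E^+(u)$ in $\vec{G}$ to $E^-(u)$ in $\vec{G}'$. In $\vec{G}$ it contributes $+\varphi(e)$ to the net outflow at $u$. In $\vec{G}'$ it contributes $-\varphi'(e)=\varphi(e)$. So the net outflow at $u$ is identical in both settings, and the symmetric computation handles $w$. If $e$ is a loop, then $e$ lies in neither $E^+(v)$ nor $E^-(v)$ for any $v$, by the definition with $X=\{v\}$, so nothing changes at all.

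Hence the net outflow at every vertex is the same for $(\vec{G},\varphi)$ and $(\vec{G}',\varphi')$, and \cref{KL} holds for one exactly when it holds for the other. The converse direction needs no separate argument, because the construction is an involution: reversing $e$ again and negating again recovers $(\vec{G},\varphi)$. No step is a real obstacle; the only care needed is the loop case and the observation that $\varphi'$ is naturally defined on $E(\vec{G}')$.
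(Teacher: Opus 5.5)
Your proposal is correct and takes essentially the paper's approach. The paper gives no separate proof of this observation; it only remarks in the introduction that negating $\varphi(e)$ while swapping $\init(e)$ and $\ter(e)$ preserves \cref{KL}. Your vertex-by-vertex net-outflow comparison, together with the nowhere-zero check and the loop case, is the explicit version of that remark.
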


Finally, we recall the following proposition about the net flow across any edge cut. 
\begin{lem}{\rm\cite[Proposition 6.1.1]{diestel2017graph}}\label{lem:cut}
Let $\vec{G}$ be an oriented graph with an $\Acal$-circulation $\varphi$ for an abelian group $\Acal$.
Then for any $X \subseteq V(G)$,
$$\sum_{e\in E^+(X)} \varphi(e)= \sum_{e\in E^-(X)}\varphi(e).$$
\end{lem}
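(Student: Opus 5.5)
The statement just before this point is \cref{lem:cut}, the net-flow identity across an edge cut. I would prove it by summing \cref{KL} over all vertices of $X$ and cancelling the edges that lie entirely inside $X$.

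Fix $X \subseteq V(G)$. For each $v \in X$, \cref{KL} gives
\begin{equation*}
\sum_{e \in E^+(v)} \varphi(e) - \sum_{e \in E^-(v)} \varphi(e) = 0 .
\end{equation*}
Summing over $v \in X$ gives $\sum_{v\in X}\bigl(\sum_{e\in E^+(v)}\varphi(e)-\sum_{e\in E^-(v)}\varphi(e)\bigr)=0$. I would then regroup this double sum edge by edge. Each non-loop edge $e$ contributes $+\varphi(e)$ if $\init(e)\in X$ and $-\varphi(e)$ if $\ter(e)\in X$.

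There are four cases for how $e$ sits relative to $X$:
\begin{compactenum}
\item[(a)] If both endpoints lie in $X$, the two contributions cancel.
\item[(b)] If neither endpoint lies in $X$, the edge contributes nothing.
\item[(c)] If $\init(e)\in X$ and $\ter(e)\notin X$, then $e\in E^+(X)$ and it contributes $+\varphi(e)$.
\item[(d)] If $\ter(e)\in X$ and $\init(e)\notin X$, then $e\in E^-(X)$ and it contributes $-\varphi(e)$.
\end{compactenum}
Loops never lie in $E^{\pm}(v)$ under the definition, so they contribute nothing. Since $\Acal$ is abelian, the regrouping is legitimate. The total sum therefore equals $\sum_{E^+(X)}\varphi(e)-\sum_{E^-(X)}\varphi(e)$, and it is zero, which proves the claim.

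The only point needing care is case (a), for edges with both ends in $X$. One has to check that $E^+(v)$ counts $e$ exactly when $v=\init(e)$ and the other end differs, and similarly for $E^-(v)$. This is so because $E^+(\{v\})$ requires $\ter(e)\notin\{v\}$, which excludes loops, so each such edge is counted exactly once with each sign. Parallel edges cause no trouble because edges are indexed individually.
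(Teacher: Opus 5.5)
Your argument is correct and is essentially the standard proof of this result. The paper gives no proof of its own and defers to Diestel's Proposition~6.1.1, which likewise sums the vertex conditions over $X$ and cancels the edges with both ends in $X$; your handling of loops (which lie in no $E^{\pm}(v)$ and no $E^{\pm}(X)$) and of parallel edges fits the paper's definitions of $E^{+}$ and $E^{-}$.
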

The last lemma implies that any graph with a bridge does not have an $\Acal$-flow. 



\section{A Geometric Approach of \texorpdfstring{$\SSS^2$}{S2}-Flows}\label{proof_of_geometric_theorem}

\begin{obs}\label{coplanar}
    Let $\mathbf{v}_1$, $\mathbf{v}_2$ and $\mathbf{v}_3$ be three unit vectors in $\RR^3$.  Then the following two conditions are equivalent:
    \begin{enumerate}[nosep,noitemsep]
      \item $\sum_{i=1}^3\mathbf{v}_i=\mathbf{0}$; and
      \item the (point set) $\{\mathbf{0},\mathbf{v}_1,\mathbf{v}_2,\mathbf{v}_3\}$ is coplanar and the angle between $\mathbf{v}_i$ and $\mathbf{v}_j$ is $2\pi/3$ for each $1\le i<j\le 3$.
  \end{enumerate}
\end{obs}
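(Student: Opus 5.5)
We prove the two implications separately, using only inner products of unit vectors, so both reduce to a few lines of algebra.

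For (1) $\Rightarrow$ (2), assume $\mathbf{v}_1+\mathbf{v}_2+\mathbf{v}_3=\mathbf{0}$. Coplanarity is immediate: $\mathbf{v}_3=-(\mathbf{v}_1+\mathbf{v}_2)$ lies in the span of $\mathbf{v}_1,\mathbf{v}_2$. This span has dimension at most two, so it is contained in a plane through $\mathbf{0}$, and that plane contains all four points. For the angles, take the inner product of the relation with $\mathbf{v}_1$. Since $\|\mathbf{v}_i\|=1$, this gives
\[
1+\langle \mathbf{v}_1,\mathbf{v}_2\rangle+\langle \mathbf{v}_1,\mathbf{v}_3\rangle=0,
\]
and doing the same with $\mathbf{v}_2$ and $\mathbf{v}_3$ gives two more equations. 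Write $a=\langle\mathbf{v}_1,\mathbf{v}_2\rangle$, $b=\langle\mathbf{v}_1,\mathbf{v}_3\rangle$, $c=\langle\mathbf{v}_2,\mathbf{v}_3\rangle$. The system $a+b=b+c=a+c=-1$ has the unique solution $a=b=c=-1/2$. As the vectors are unit vectors, each pairwise angle is $\arccos(-1/2)=2\pi/3$.

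For (2) $\Rightarrow$ (1), the angle condition alone gives $\langle\mathbf{v}_i,\mathbf{v}_j\rangle=-1/2$ for all $i\ne j$. Then
\[
\Bigl\|\sum_{i=1}^3\mathbf{v}_i\Bigr\|^2=\sum_{i=1}^3\|\mathbf{v}_i\|^2+2\sum_{i<j}\langle\mathbf{v}_i,\mathbf{v}_j\rangle=3+2\cdot 3\cdot(-\tfrac12)=0,
\]
so $\sum_{i=1}^3\mathbf{v}_i=\mathbf{0}$. Coplanarity is not needed for this direction: three unit vectors with pairwise inner product $-1/2$ automatically sum to zero, and hence are automatically coplanar.

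No step here is a real obstacle. The only point needing care is the bookkeeping of the linear system for $a,b,c$ in the first direction, and that is elementary.
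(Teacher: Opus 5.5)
The paper records this as an observation and gives no proof, so there is no argument to compare against. Your proof is correct and complete. Pairing the relation with each $\mathbf{v}_i$ forces every pairwise inner product to be $-1/2$. Conversely, expanding $\|\mathbf{v}_1+\mathbf{v}_2+\mathbf{v}_3\|^2$ gives $0$.

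Your remark that the coplanarity clause in (2) is redundant is also right. So where the paper establishes coplanarity before invoking this observation (for instance in the forward direction of Theorem~\ref{thm:S2}), only the $2\pi/3$ angle condition is actually needed.
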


In the following, we treat points of $\SSS^2$ as unit vectors in $\RR^3$ and use $\mathbf{0}:=(0,0,0)$ to denote the origin in $\RR^3$.  
For each plane $\pi$ in $\RR^3$ that contains the origin, $C_{\pi}:=\pi\cap \SSS^2$ is called a \defin{great circle}.
Any closed non-empty connected subset $A\subseteq C_{\pi}$ is called a \defin{geodesic arc} (in $\SSS^2$).  Note that, under this definition, $C_{\pi}$ is itself a geodesic arc.  A geodesic arc $A$ is \defin{proper} if it a strict subset of some great circle.  

A \defin{simple curve} $A:[0,1]\to\SSS^2$ in $\SSS^2$ is a continuous function with the property that $A(x)\neq A(y)$ for each $0\le x < y<1$.  For a simple curve $A$ and real numbers $0\le x<y\le 1$, we use notation $\mathdefin{A^{[x,y]}}:=\{A(t):x\le t\le y\}$.

A \defin{directed} geodesic arc is a simple curve $A$ in $\SSS^2$ with the property that its image $A^{[0,1]}$ is a geodesic arc parameterized so that the length of $A^{[0,s]}$ is equal to $s$ times the length of $A^{[0,1]}$.
For a directed geodesic arc $A$, the \defin{completion} of $A$ is the directed geodesic arc $C:[0,1]\to\SSS^2$ whose image is a great circle parameterized so that $A^{[0,1]}=\{C(t):0\le t\le \ell/2\pi\}$, where $\ell$ is the length of $A$. 
In other words, the curve $C$ traverses the great circle $C^{[0,1]}$ that contains $A^{[0,1]}$ beginning and ending at $A(0)$ and travelling in the same direction as $A$.  In most of what follows we will usually not distinguish between a directed geodesic arc $A$ and the (undirected) geodesic arc $A^{[0,1]}$.


Let $\vec{G}$ be an orientation of a graph $G$. An \defin{$\SSS^2 $-immersion} of $ \vec{G} $ is a function $\gamma$ with domain $V(G)\cup E(\vec{G})$ satisfying the following conditions:
\begin{enumerate}[label={\rm(\roman*)}]
 \item\label{vertex_to_sphere} For each $v\in V(G)$, $\gamma(v)\in\SSS^2$. (The vertices of $ G $ are mapped to points of $\SSS^2$)
 \item\label{edge_to_geodesic} For each $vw\in E(\vec{G})$, $\gamma(vw)$ is a directed geodesic arc $A_{vw}$ with $A_{vw}(0)=\gamma(v)$ and $A_{vw}(1)=\gamma(w)$. (Each directed edge of $\vec{G}$ is mapped to a directed geodesic arc in $\SSS^2$ that begins $\gamma(v)$ and ends at $\gamma(w)$.
\end{enumerate}

It is worth noting some differences between $\SSS^2$-immersions and embeddings of a graph $G$ in $\SSS^2$.  Unlike an embedding, there is no requirement that $\gamma:V(G)\to\SSS^2$. An $\SSS^2$-immersion $\gamma$ may have $\gamma(v)=\gamma(w)$ for distinct $v,w\in V(G)$. Furthermore, unlike a crossing-free embedding, there are no restrictions on the images of edges in an immersion. An $\SSS^2$-immersion $\gamma$ may have  $\gamma(vw)\cap \gamma(xy)\neq\emptyset$ and even $\gamma(vw)\subseteq \gamma(xy)$ for disjoint $vw,xy\in E(\vec{G})$.  


We now introduce a notation that allows us to discuss the spherical angle between the images of two edges incident to a common vertex $v$ in an $\SSS^2$-immersion.
For an $\SSS^2$-immersion $\gamma$ of $\vec{G}$, and a directed edge $vw\in E(\vec{G})$ with $A_{vw}:=\gamma(vw)$, we define $\gamma^{(1/3)}(vw):=A_{vw}^{[0,1/3]}$ and $\gamma^{(1/3)}(wv):=A_{vw}^{[2/3,1]}$.\footnote{The constants $1/3$ and $2/3$ are not critical here. The values $1/2\pm \epsilon$ for any $\epsilon>0$ would be sufficient.} Note that each of $\gamma^{(1/3)}(vw)$ and $\gamma^{(1/3)}(wv)$ is a proper geodesic arc, even if $\gamma(vw)$ is not.  
For an orientation $\vec{G}$ of a cubic graph $G$, an $\SSS^2$-immersion $\gamma$ of $\vec{G}$ is \defin{equiangular} if, 
\begin{enumerate}[label={\rm(\roman*)}]
  \setcounter{enumi}{2}
  \item\label{one_twenty} for each $v\in V(G)$ and each pair of undirected edges $vx$ and $vy$ incident to $v$, the spherical angle between $\gamma^{(1/3)}(vx)$ and $\gamma^{(1/3)}(vy)$ is exactly $2\pi/3$.
\end{enumerate}

We have the following observation about $\SSS^2$-immersions, akin to \cref{reversing}:

\begin{obs}\label{reversing_immersions}
    Let $\vec{G}$ be an oriented graph, let $\gamma$ be an equiangular $\SSS^2$-immersion of $G$, let $\vec{G}'$ be obtained from $\vec{G}$ by reversing the orientation of some edge $e$ and let $\gamma'$ be obtained from $\gamma$ by setting $\gamma'(e)(t):=\gamma(e)(1-t)$, for each $0\le t\le 1$.  Then $\gamma$ is an equiangular $\SSS^2$-immersion of $\vec{G}$ if and only if $\gamma'$ is an equiangular $\SSS^2$-immersion of $\vec{G}'$. 
\end{obs}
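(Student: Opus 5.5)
The statement is Observation~\ref{reversing_immersions}. Since reversal is an involution ($\vec{G}'' = \vec{G}$ and $\gamma''=\gamma$), I will only prove one direction: if $\gamma$ is an equiangular $\SSS^2$-immersion of $\vec{G}$, then $\gamma'$ is one of $\vec{G}'$. The converse follows by applying the same argument to $(\vec{G}',\gamma')$. The plan is to check conditions \ref{vertex_to_sphere}, \ref{edge_to_geodesic} and \ref{one_twenty} in turn. Only the reversed edge $e$ needs attention, because $\gamma'$ agrees with $\gamma$ on all vertices and on every other edge.

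\emph{Conditions \ref{vertex_to_sphere} and \ref{edge_to_geodesic}.} Condition \ref{vertex_to_sphere} is unchanged. For \ref{edge_to_geodesic}, write $e$ with $\init(e)=v$ and $\ter(e)=w$ in $\vec{G}$, so that $e$ is oriented from $w$ to $v$ in $\vec{G}'$. Put $A=\gamma(e)$ and $A'(t)=A(1-t)$. Then $A'(0)=\gamma(w)$ and $A'(1)=\gamma(v)$, so the endpoints are correct. The image of $A'$ is the same geodesic arc as the image of $A$. The map $A'$ is a simple curve because $t\mapsto 1-t$ is a bijection of $[0,1]$.

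For the uniform-speed parametrization, note that $A'^{[0,s]}=A^{[1-s,1]}$. Its length is the length of $A$ minus the length of $A^{[0,1-s]}$, which equals $(1-(1-s))\ell = s\ell$, where $\ell$ is the length of $A$. Hence $A'$ is a directed geodesic arc from $\gamma(w)$ to $\gamma(v)$.

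\emph{Condition \ref{one_twenty}.} This is the only step with real content. The key claim is that the germs $\gamma^{(1/3)}$ at each endpoint are unchanged as point sets. By definition,
\[
\gamma'^{(1/3)}(wv)=A'^{[0,1/3]}=A^{[2/3,1]}=\gamma^{(1/3)}(wv),
\qquad
\gamma'^{(1/3)}(vw)=A'^{[2/3,1]}=A^{[0,1/3]}=\gamma^{(1/3)}(vw).
\]
So at every vertex, the three geodesic arcs issuing from it are identical as sets under $\gamma$ and under $\gamma'$.

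The spherical angle between two such arcs at their common endpoint depends only on the arcs as sets emanating from that point. It is the angle between their tangent directions there, and it ignores the orientation of the underlying edges. Therefore all angles at every vertex are preserved, and the equiangularity condition continues to hold.

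The only potential subtlety is making sure the germs are read off from the correct ends after reversal. The two displayed identities settle exactly this, so I expect no genuine obstacle.
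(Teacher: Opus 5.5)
Your verification is correct. The paper states this as an observation without proof, and your check is exactly the justification it leaves implicit. The substantive point is your identity $(\gamma')^{(1/3)}(wv)=A^{[2/3,1]}=\gamma^{(1/3)}(wv)$ and $(\gamma')^{(1/3)}(vw)=A^{[0,1/3]}=\gamma^{(1/3)}(vw)$, which the paper's two-ended definition of $\gamma^{(1/3)}$ is designed to give.

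One small precision is needed in condition (ii). The paper only requires simple curves to be injective on $[0,1)$, so that full great circles are allowed. The reflection $t\mapsto 1-t$ therefore only gives injectivity of $A'$ on $(0,1]$, and you should add that $A(1)\neq A(s)$ for $0<s<1$. This holds: otherwise $A^{[s,1]}=A^{[s,1)}$ would be a simple closed curve inside a great circle, hence the whole circle. It would then contain $A(0)$, contradicting injectivity of $A$ on $[0,1)$.
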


\Cref{reversing_immersions} implies that if some orientation $\vec{G}$ of a cubic graph $G$ has an equiangular $\SSS^2$-immersion then every orientation of $G$ does.  Thus, having equiangular $\SSS^2$-immersions is a property of (unoriented) cubic graphs. The following theorem shows that this property completely characterizes cubic graphs that have $\SSS^2$-flows.
 

\begin{mythm}{\ref{thm:S2}}
A cubic graph $ G $ admits an equiangular $ \SSS^2 $-immersion if and only if $ G $ admits an $ \mathbb{S}^2 $-flow.
\end{mythm}

\begin{proof}
For the forward implication, let $\gamma$ be an $\SSS^2$-immersion of $G$ and fix an arbitrary orientation $\vec{G}$ of  $G$. Consider some oriented edge $vw\in E(\vec{G})$, let $A_{vw}:=\gamma(vw)$ and let $C_{vw}$ be the completion of $A_{vw}$.    
The great circle $C_{vw}$ is contained in a plane $\pi_{vw}\subset\RR^3$, and there are exactly two unit vectors $\mathbf{v}$ and $-\mathbf{v}$ orthogonal to $\pi_{vw}$.  If $C_{vw}$ winds counterclockwise around $\mathbf{v}$ then we define $\varphi(vw):=\mathbf{v}$, otherwise we define $\varphi(vw):=-\mathbf{v}$.  In either case, $C_{vw}$, winds counterclockwise around $\pmb{\varphi}(vw)$.

By construction $\varphi$ is a map from $E(\vec{G})$ to $\SSS^2$.  We now argue that $\varphi$ is an $\SSS^2$-flow for $\vec{G}$.  
Consider some vertex $v$ of $G$.  
By \cref{reversing,reversing_immersions} we may assume, without loss of generality, that the three edges $vv_1$, $vv_2$, and $vv_3$ are oriented away from $v$.  
We must therefore show that $\sum_{i=1}^3 \pmb{\varphi}(vv_i)=\mathbf{0}$. 
For each $i\in\{1,2,3\}$, the vector $\pmb{\varphi}(vv_i)$ is orthogonal to the vector $\gamma(v)$, so the vectors $\pmb{\varphi}(vv_1)$, $\pmb{\varphi}(vv_2)$, and $\pmb{\varphi}(vv_3)$ are contained in a plane $\pi_v$ that also contains the origin.  By the definition of $\pmb{\varphi}(vv_i)$, $\pi_v$ is orthogonal to the vector $\pmb{\gamma}(v)$.  
For each $i\in\{1,2,3\}$, let $A_i:=\gamma(vv_i)$ and let $$\mathbf{v}_i:=\lim_{\epsilon\to 0}\frac{A_i(\epsilon)-A_i(0)}{\|A_i(\epsilon)-A_i(0)\|}.$$  
In words, $\mathbf{v}_i$ is the unit vector that is parallel to the direction in which $A_i$ departs from $\gamma(v)$.  The spherical angle between $\gamma^{(1/3)}(vv_i)$ and $\gamma^{(1/3)}(vv_j)$ is equal to the angle between the two vectors $\mathbf{v}_i$ and $\mathbf{v}_j$, for each $1\le i<j\le 3$.  
For each $i\in\{1,2,3\}$, the vector $\mathbf{v}_i$ is contained in the plane $\pi_v$ and is, in fact, obtained by rotating $\pmb{\varphi}(vv_i)$ clockwise around $\pmb{\gamma}(v)$ by an angle of $\pi/2$. 
Therefore, the angle between $\pmb{\varphi}(vv_i)$ and $\pmb{\varphi}(vv_j)$ is equal to the angle between $\mathbf{v_i}$ and $\mathbf{v}_j$. Since $\gamma$ is equiangular, this angle is $2\pi /3$.  Therefore $\pmb{\varphi}(vv_1)$, $\pmb{\varphi}(vv_1)$, and $\pmb{\varphi}(vv_2)$ are coplanar with the origin and the angle between $\pmb{\varphi}(vv_i)$, and $\pmb{\varphi}(vv_j)$ is $2\pi/3$, for each $1\le i<j\le 3$.  Therefore, \cref{coplanar} implies that $\sum_{i=1}^{3}\pmb{\varphi}(vv_i)=\mathbf{0}$, as required.

To establish the backward implication, fix an arbitrary orientation $\vec{G}$ of $G$ and let $\varphi$ be an $\SSS^2$-flow of $\vec{G}$.  For each vertex $v$ of $G$ arbitrarily select one of the two possible cyclic orders on the three edges of $G$ incident to $v$. We use $v_1,v_2,v_3$ to denote the neighbours of $v$, using this cyclic order.  Now consider an arbitrary vertex $ v $ of $ G $. 
By \cref{reversing} we may assume, without loss of generality, that the three edges $vv_1$, $vv_2$, and $vv_3$ are oriented away from $v$.  Define $\pmb{\gamma}(v)\coloneqq(2/\sqrt{3})(\pmb{\varphi}(vv_1)\times\pmb{\varphi}(vv_2))$.  
By \cref{coplanar}, the angle between the vectors $\pmb{\varphi}(vv_1)$ and $\pmb{\varphi}(vv_2)$ is $2\pi/3$, so $\gamma(v)$ is a unit vector (since $\sin(2\pi/3)=\sqrt{3}/2$).  Also by \cref{coplanar}, $v_1$, $v_2$, and $v_3$ are coplanar with the origin, so $\pmb{\gamma}(v)=2/\sqrt{3}\pmb{\varphi}(vv_2)\times\pmb{\varphi}(vv_3)=2/\sqrt{3}\pmb{\varphi}(vv_3)\times\pmb{\varphi}(vv_1)$ 
This defines $\gamma(v)$ for each vertex $v$ of $G$. 

What remains is to define $\gamma(uv)$ for each edge $uv\in\E(\vec{G})$. Without loss of generality, assume that $u=v_1$ and that $v=u_1$.  
By \cref{reversing} we may assume, without loss of generality, that $uu_2,uu_3,uv,vv_2,vv_3$ are the five (oriented) edges of $\vec{G}$ incident to $u$ and $v$.  
Then $\pmb{\gamma}(u)=2/\sqrt{3}(\pmb{\varphi}(uv)\times \pmb{\varphi}(uu_2))$ and $\gamma(v)=-2/\sqrt{3}(\varphi(uv)\times\varphi(vv_2))$. 
Geometrically, this means that $\gamma(u)$ and $\gamma(v)$ are each contained in the plane $\pi_{uv}$ that contains the origin and is orthogonal to $\varphi(uv)$. 
Let $C_{uv}:=\pi_{uv}\cap \SSS^2$, so $C_{uv}$ is a great circle that contains $\gamma(u)$ and $\gamma(v)$. 
We define $\gamma(uv)$ to the directed arc $A_{uv}$ that is contained in $C_{uv}$, has $A_{uv}(0)=\gamma(u)$, $A_{uv}(1)=\gamma(v)$ and whose extension winds counterclockwise around $\pmb{\gamma}(uv)$.


We now argue that $\gamma$ is an $\SSS^2$-immersion of $G$.  By definition, $\gamma(v)\in\SSS^2$ for each $v\in V(G)$, so $\gamma$ satisfies Condition~\ref{vertex_to_sphere}.  By definition, each edge $uv$ of $\vec{G}$ is mapped to a directed geodesic arc $A_{uv}$ (contained in $C_{uv}$) with $A_{uv}(0)=\gamma(u)$ and $A_{uv}(1)=\gamma(v)$, as required by Condition~\ref{edge_to_geodesic}.  Therefore $\gamma$ is an $\SSS^2$-immersion.  All that remains is to show that $\gamma$ is equiangular.

Let $v$ be an arbitrarys vertex of $G$.  By \cref{reversing} we may assume without loss of generality, that $vv_1$, $vv_2$, and $vv_3$ are the three oriented edges of $\vec{G}$ incident to $v$.  Since $\gamma$ is a $\SSS^2$-flow, $\sum_{i=1}^3 \varphi(vv_i)=\mathbf{0}$.  By \cref{coplanar}, there is a plane $\pi_v$ that contains $$\{\mathbf{0},\varphi(vv_1),\varphi(vv_2),\varphi(vv_3)\}.$$  By the choice of $\gamma(v)$, this plane is orthogonal to the vector $\pmb{\gamma}(v)$.
For each $i\in\{1,2,3\}$, let $A_i:=\gamma(vv_i)$ and let $C_i$ be the completion of $A_i$.
Then, for each $i\in\{1,2,3\}$, the tangent vector $$\mathbf{v}_i:=\lim_{\epsilon\mathbin{\downarrow} 0} \frac{(C_i(\epsilon)-C_i(0))}{\|C_i(\epsilon)-C_i(0)\|}$$ is contained in the plane $\pi_v$ and is, in fact, obtained by rotating $\pmb{\varphi}(vv_i)$ clockwise around $\pmb{\gamma}(v)$ by an angle of $\pi/2$. 
Therefore, the angle between $\mathbf{v}_i$ and $\mathbf{v}_j$ is equal to the angle between $\pmb{\varphi}(vv_i)$ and $\pmb{\varphi}(vv_j)$, for each $1\le i<j\le 3$.  
By \cref{coplanar} this latter angle is $2\pi/3$. Therefore the spherical angle between $\gamma^{(1/3)}(vv_i)$ and $\gamma^{(1/3)}(vv_j)$ (which is equal to the angle between $\mathbf{v}_i$ and $\mathbf{v}_j$) is $2\pi/3$.  Therefore $\gamma$ satisfies Condition~\ref{one_twenty}, so $\gamma$ is an equiangular $\SSS^2$-immersion of $\vec{G}$. 
\end{proof}


\section{Examples of Graphs with an \texorpdfstring{$\SSS^2$}{S2}-Flows}\label{examples}
In this section, we illustrate the use of \cref{thm:S2} to show that some graphs admits an $\SSS^2$-flow. An important tools to accomplish our objective is \defin{the Intermediate Value Theorem}. Thus, we begin by recalling this fact about continuous functions. A space $S$ is \defin{path-connected}, if for every pair of points $x$ and $y$ in $S$, there exists a continuous function  
\begin{equation}
\gamma \colon [0,1] \to S \quad \text{such that} \quad \gamma(0) = x \quad \text{and} \quad \gamma(1) = y.
\end{equation}
\begin{lem}\label{inter_mediate_value_thm}[Intermediate Value Theorem]
Let $S$ be a path-connected space, and let $ f \colon S \to \RR $ be a continuous function. If $ a, b \in S$ then $f$ attains all the values between $f(a)$ and $f(b)$.
\end{lem}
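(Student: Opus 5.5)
The statement is the Intermediate Value Theorem for a path-connected space $S$. My plan is to reduce it to the classical one-variable Intermediate Value Theorem on $[0,1]$ by composing with a path. Given $a,b\in S$, path-connectedness supplies a continuous $\gamma\colon[0,1]\to S$ with $\gamma(0)=a$ and $\gamma(1)=b$. The composition $g\coloneqq f\circ\gamma\colon[0,1]\to\RR$ is then continuous, as a composition of continuous maps, with $g(0)=f(a)$ and $g(1)=f(b)$. Any value $c$ lying between $f(a)$ and $f(b)$ therefore lies between $g(0)$ and $g(1)$.

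Next I apply the real Intermediate Value Theorem to $g$ to get some $t\in[0,1]$ with $g(t)=c$. The point $x\coloneqq\gamma(t)\in S$ then satisfies $f(x)=c$, which proves the claim. If $f(a)=f(b)$, the only value in question is $f(a)$ itself, which is attained at $a$, so there is nothing to do.

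If the real Intermediate Value Theorem is not taken as given, I would prove it for $g$ directly. Assume without loss of generality that $g(0)<c<g(1)$, and set $t\coloneqq\sup\{s\in[0,1] : g(s)\le c\}$. Continuity at $t$ rules out both $g(t)<c$ and $g(t)>c$. If $g(t)<c$, then $t<1$ and $g$ stays below $c$ slightly to the right of $t$, contradicting maximality. If $g(t)>c$, then $t>0$ and $g$ stays above $c$ on a neighbourhood of $t$, so a smaller upper bound for the set exists. Hence $g(t)=c$.

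This last step, which uses the completeness of $\RR$, is the only nontrivial point; everything else is formal composition. An alternative route is to note that $\gamma([0,1])$ is connected, so $f(\gamma([0,1]))$ is a connected subset of $\RR$, hence an interval containing $f(a)$ and $f(b)$.
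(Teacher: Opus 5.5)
Your argument is correct: composing $f$ with a path $\gamma\colon[0,1]\to S$ from $a$ to $b$ and applying the one-variable theorem proves the statement. Your supremum proof of that one-variable case is also sound, with the case $g(0)>c>g(1)$ handled by passing to $-g$ and the endpoint values attained trivially. The paper gives no proof of this lemma; it recalls it as a standard fact, after defining path-connectedness via maps from $[0,1]$ just beforehand, and your reduction is the standard justification that this setup presupposes.
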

The next lemma describes a local modification that preserves the immersion.
\begin{lem}\label{lem:upper_half}
Let $ G $ be a graph with an $\SSS^2 $-immersion. Then, for any vertex $ v \in V(G) $, we can replace $ v $ with the antipodal point $ -v $ and modify only the edges incident to $ v $ to obtain another $\SSS^2 $-immersion of $ G $ that preserves the angles formed by any pair of edges meeting at a common vertex.
\end{lem}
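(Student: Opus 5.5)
We prove \cref{lem:upper_half} by replacing $\gamma(v)$ with $-\gamma(v)$ and, for each edge $e$ at $v$, replacing the directed arc $\gamma(e)$ by an arc on the \emph{same} great circle that now ends (or begins) at $-\gamma(v)$. The natural choice for the new arc is to keep the fixed endpoint and, travelling along the completion of the arc in the opposite direction, go to $-\gamma(v)$ instead. This works because the old and new endpoints are antipodal and the arc stays on its great circle.

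\textbf{Setup.} By \cref{reversing_immersions} we may assume every edge at $v$ is oriented away from $v$. Let $A=\gamma(vw)$ be one such arc, with completion $C$, great circle $C_\pi$ and plane $\pi$. Since $\pi$ is a linear plane through $\gamma(v)$, it also contains $-\gamma(v)$, so the great circle $C_\pi$ passes through both $\pm\gamma(v)$. We deal with the case $w\neq v$; a loop, with both ends at $v$, is handled the same way and flips both endpoints.

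\textbf{The new arc.} Define $A'$ as the directed geodesic arc on $C_\pi$ that starts at $-\gamma(v)$, traverses $C_\pi$ in the \emph{reverse} rotational sense to $C$, and stops at $\gamma(w)$. Concretely, $A'$ is obtained from the directed arc running from $\gamma(w)$ to $-\gamma(v)$ along the direction of $C$, traversed backwards. Then $A'(0)=-\gamma(v)$ and $A'(1)=\gamma(w)$, so conditions \ref{vertex_to_sphere} and \ref{edge_to_geodesic} hold. All other vertices and edges are unchanged.

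\textbf{Angles.} We check that the initial pieces $\gamma^{(1/3)}$ still meet at the same spherical angles.

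\emph{At $w$.} The arc $A'$ arrives at $\gamma(w)$ along $C_\pi$ from the same side as $A$ did. Both arcs arrive moving in the rotational direction of $C$, so they have the same incoming tangent direction at $\gamma(w)$. Hence every angle at $w$ is preserved.

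\emph{At $v$.} We use the fact that the antipodal map $x\mapsto -x$ is an isometry of $\SSS^2$. The old departing tangent at $\gamma(v)$ was $\mathbf{t}$, the unit tangent of $C$. Rotation by $\pi$ along $C_\pi$ carries $\gamma(v)$ to $-\gamma(v)$ and its tangent $\mathbf{t}$ to $-\mathbf{t}$. Since $A'$ departs $-\gamma(v)$ against the rotational sense of $C$, its departing tangent is $-(-\mathbf{t})=\mathbf{t}$.

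So every new departing tangent at $-\gamma(v)$ equals the corresponding old tangent $\mathbf{t}_i$. Each $\mathbf{t}_i$ is orthogonal to $\gamma(v)$, and hence also to $-\gamma(v)$. The angles between the $\mathbf{t}_i$ are unchanged, and therefore the spherical angles between the $\gamma^{(1/3)}$ pieces at $v$ are preserved.

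\textbf{Main obstacle.} The delicate step is the sign bookkeeping of the tangent directions: it must be verified that at $w$ the arrival direction is unchanged while at $v$ the departure tangent is literally the same vector. The cleanest way is a parametrization $C(t)=\cos(2\pi t)\,\gamma(v)+\sin(2\pi t)\,\mathbf{t}$, in which $-\gamma(v)=C(1/2)$ and the tangent can be computed explicitly. A secondary point is degenerate arcs, where $\gamma(w)=\pm\gamma(v)$ or $A$ is a full great circle. These need care to pick a nonconstant arc of the correct direction, and this is done using the same parametrization.
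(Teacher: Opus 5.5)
\textbf{The construction picks the wrong arc, and the angles at $w$ are not preserved.} You define $A'$ to run along $C_\pi$ in the \emph{reverse} rotational sense to $C$. A geodesic arc keeps the same sense along its whole great circle, so $A'$ also \emph{arrives} at $\gamma(w)$ moving against $C$.

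Use your own parametrization $C(\theta)=\cos\theta\,\gamma(v)+\sin\theta\,\mathbf{t}$ with $\gamma(w)=C(\ell)$. The old arc reaches $\gamma(w)$ through the points $C(\ell-\varepsilon)$ with velocity $C'(\ell)$. Your $A'$ is the backward traversal of the continuation of $C$ beyond $\gamma(w)$. It reaches $\gamma(w)$ through the points $C(\ell+\varepsilon)$ with velocity $-C'(\ell)$. So your statements that $A'$ ``arrives from the same side as $A$'' and ``moves in the rotational direction of $C$'' contradict your own definition.

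As a result, $\gamma^{(1/3)}(wv)$ now leaves $\gamma(w)$ in the opposite direction. Every angle $\alpha$ at $w$ between this edge and another edge becomes $\pi-\alpha$, so in an equiangular immersion $2\pi/3$ turns into $\pi/3$. Concretely, put $v$ at the north pole and $w$ on the equator, joined by a quarter meridian. Your $A'$ is the southern quarter of the same meridian, which leaves $w$ heading south, whereas the old edge left $w$ heading north.

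The two requirements under your ``Main obstacle'' cannot both hold. Keeping the departing tangent at $-\gamma(v)$ equal to $\mathbf{t}$ forces the reverse sense, and the reverse sense forces the flipped arrival at $\gamma(w)$.

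\textbf{The fix is the paper's argument: keep the sense of traversal.} Take $A'$ to be the arc of $C_\pi$ from $-\gamma(v)$ to $\gamma(w)$ traversed in the \emph{same} sense as $C$. The paper describes this as the arc containing the original edge, obtained by sliding $\gamma(v)$ backwards along the circle until it reaches $-\gamma(v)$.

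With this choice, the end of the edge at $\gamma(w)$ is literally unchanged. The departing tangent at $-\gamma(v)=C(\pi)$ is $C'(\pi)=-\mathbf{t}$. Thus every tangent at $v$ is negated simultaneously. Since the angle between $-\mathbf{t}_i$ and $-\mathbf{t}_j$ equals the angle between $\mathbf{t}_i$ and $\mathbf{t}_j$, all angles at $v$ are preserved as well. So you need ``negated at $v$, unchanged at $w$'', not ``unchanged at $v$, unchanged at $w$''.

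Stating the rule as ``same sense as $C$'' also handles the degenerate cases uniformly. This includes arcs of length greater than $\pi$, and the case $\gamma(w)=-\gamma(v)$, where $A'$ is the full great circle.
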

\begin{proof}
The great circles that contain the edges incident to $ v $ intersect both at $ v $ and at $ -v $. 
For each such edge $ vw $, we replace it with the arc of the same great circle that connects $ -v $ to $ w $, choosing the arc that contains the original edge $ vw $.
To see that this transformation preserves the angles between edges incident to $ v $, observe that this can be visualized as moving the point $ v $ along the great circle, away from $ w $, until it reaches the antipodal point $ -v $. This operation effectively reverses the direction in which each edge $ vw $ departs from $ v $. Since the direction of each incident edge is negated, the angles between any pair of them remain unchanged.
This operation does not affect the direction of any edge at its other endpoint. 
In particular, for each neighbor $ w $ of $ v $, the direction in which the edge $ wv $ departs from $ w $ remains unchanged.
Therefore, the resulting immersion preserves all local geometric properties at each vertex, and in particular, all angles at the vertices remain the same.
\end{proof}

\subsection{Generalised Peterson Graph}
Let $a,b,p\in\mathbb{N}$ with $\left\lceil \tfrac{p}{6}\right\rceil \le a,b \le \left\lfloor \tfrac{p}{2}\right\rfloor$. 
We define the \defin{quasi-Petersen graph} $G_{a,b,p}$ with parameters $a,b,p$, as follows.
\begin{compactenum}
    \item The vertex set is the disjoint union of
    \[
        V\coloneqq \{v_0,\dots,v_{p-1}\}\quad\text{and}\quad
        W\coloneqq \{w_0,\dots,w_{p-1}\},
    \]
    \item The edge set $E(G_{a,b,p}) = C_V \cup C_W \cup M$, where (all subscripts are taken modulo $p$, and $[p] := \{0,1,\dots, p - 1\}$) 
    \[
    C_V\coloneqq\{\, v_i v_{i+a} : i\in [p] \,\},\qquad
    C_W\coloneqq \{\, w_i w_{i+b} : i\in [p] \,\},\qquad
    M\coloneqq \{\, v_i w_i : i\in [p] \,\}.
\]
\end{compactenum}

\begin{obs}
The following holds true:
\begin{enumerate}
\item One can verify that $G_{1,2,5}$ is the Petersen graph. 
More precisely if $\gcd(a,p)=1$ then $C_V$ is a cycle. 
Similarly, $\gcd(b,p)=1$ implies that $C_W$ is a cycle. 
In particular, for any prime $p$, taking $a=\floor{p/2}-1$ and $b=\floor{p/2}$ yields a graph $G_{a,b,p}$ consisting of two cycles $C_V$ and $C_W$ with a matching $M$ between them. 
\item For any odd $p\ge 5$, the choices $a=\floor{p/2}-1$ and $b=\floor{p/2}$ satisfy $\,\tfrac{p}{6}<a,b<\tfrac{p}{2}$. 
\item For any even $p\ge 8$, the choices $a=p/2-2$ and $b=p/2-1$ satisfy this requirement.
\item The \defin{generlized Petersen graph}
${\displaystyle G(n,k)}$\footnote{Here we use the Watkins' notation. However, some authors use the notation 
${\displaystyle GPG(n,k)}$.} is a graph with vertex set
$${\displaystyle \{u_{0},u_{1},\ldots ,u_{n-1},v_{0},v_{1},\ldots ,v_{n-1}\}}$$ and edge set
${\displaystyle \{u_{i}u_{i+1},u_{i}v_{i},v_{i}v_{i+k}\mid 0\leq i\leq n-1\}}$ where subscripts are to be read modulo 
${\displaystyle n}$ and where 
${\displaystyle k<n/2}$. 
One can see that our notation generalizes the notion of a generalized Petersen graph, where $\gcd(a,p)=1$.
\end{enumerate}
\end{obs}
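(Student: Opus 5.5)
The statement to prove is the observation just above, with four items. Each is a direct check from the definitions, so I would verify them one at a time.

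\textbf{Item 1.} For $G_{1,2,5}$, I would write out the three edge sets explicitly. $C_V=\{v_iv_{i+1}\}$ is the outer $5$-cycle. $C_W=\{w_iw_{i+2}\}$ is the pentagram $w_0w_2w_4w_1w_3$. $M=\{v_iw_i\}$ is a perfect matching. This is exactly the standard drawing of the Petersen graph, i.e.\ $G(5,2)$.

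For the cycle claim: the map $i\mapsto i+a$ on $\ZZ_p$ generates a single orbit of length $p$ exactly when $\gcd(a,p)=1$. Hence $C_V$ is a Hamiltonian cycle on $V$, provided $a\neq 0$ and $2a\not\equiv 0 \pmod p$, so that no loops or doubled edges arise. This is guaranteed because $a\le p/2$ and, when $\gcd(a,p)=1$ with $p\ge 3$, $a\neq p/2$. The same argument applies to $C_W$.

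For $p$ prime, I would check that $a=\lfloor p/2\rfloor-1$ and $b=\lfloor p/2\rfloor$ lie in $[1,p-1]$, which gives $\gcd=1$ automatically. This needs $p\ge 5$ so that $a\ge 1$. I would note that the statement implicitly requires $p\ge 5$, since $p=2,3$ give $a\le 0$.

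\textbf{Items 2 and 3.} Here I verify the inequalities directly. For odd $p=2m+1\ge 5$, we have $a=m-1$ and $b=m$. Then $b=m<p/2=m+\tfrac12$. Also $a=m-1>p/6=(2m+1)/6$ is equivalent to $6m-6>2m+1$, i.e.\ $m>7/4$, which holds since $m\ge 2$.

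For even $p=2m\ge 8$, we have $a=m-2$ and $b=m-1$, both strictly less than $m=p/2$. Also $a>p/6=m/3$ is equivalent to $m>3$, which holds since $m\ge 4$.

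From the strict inequalities I would deduce the required bounds $\lceil p/6\rceil\le a,b\le\lfloor p/2\rfloor$, using that $a$ and $b$ are integers.

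\textbf{Item 4.} I would exhibit the isomorphism $G(n,k)\cong G_{1,k,n}$ given by $u_i\mapsto v_i$ and $v_i\mapsto w_i$. This sends outer edges to $C_V$ with $a=1$, spokes to $M$, and inner edges to $C_W$ with $b=k$. So the generalized Petersen graphs are the case $a=1$, $\gcd(a,p)=1$.

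More generally, if $\gcd(a,p)=1$, I would relabel $V$ along the cycle $C_V$. Take $a^{-1}$ modulo $p$ and set $i'=a^{-1}i$, so that $v_i$ becomes the $i'$-th vertex of the cycle. Applying the same relabeling to $w_i$ preserves $M$ and turns $C_W$ into the inner edges with step $a^{-1}b \bmod p$. This shows $G_{a,b,p}\cong G(p,\pm a^{-1}b)$.

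\textbf{Main obstacle.} The only delicate point is matching the parameter range of $G(n,k)$. The quasi-Petersen definition requires $b\ge\lceil p/6\rceil$, whereas $G(n,k)$ only needs $k<n/2$. I would therefore phrase item 4 as the quasi-Petersen family subsuming $G(n,k)$ up to this parameter normalization. I would not claim every $G(n,k)$ lies in the stated parameter window.
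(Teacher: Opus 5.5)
Your proposal is correct and is the same direct check from the definitions that the paper leaves to the reader; the paper gives no proof beyond ``one can verify''. Your caveats are accurate refinements of the paper's informal wording: the prime case needs $p\ge 5$; $G(n,k)\cong G_{1,k,n}$ violates $a\ge\lceil p/6\rceil$ once $n\ge 7$; and in the converse $G_{a,b,p}\cong G(p,\pm a^{-1}b)$, the boundary case $b=p/2$ gives step $p/2$, which is excluded by $k<n/2$ and where both graphs degenerate.
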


We say that an $\SSS^2$-immersion is \defin{injective} if the vertex map $\gamma\!\restriction_{V(G)}\colon V(G)\to \SSS^2$ is injective.

\begin{proposition}
For any positive integers $a,b,p$ with $\tfrac{p}{6}<a,b<\tfrac{p}{2}$, the graph $G_{a,b,p}$ has an injective equiangular $\SSS^2$-immersion.
\end{proposition}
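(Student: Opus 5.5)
We will build the immersion explicitly using the rotational symmetry of $G_{a,b,p}$, so that the equiangularity condition~\ref{one_twenty} reduces to one scalar equation for each of the two rings, and then solve each equation with the Intermediate Value Theorem (\cref{inter_mediate_value_thm}).

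\emph{Construction.} Fix the north pole $N$ and south pole $S=-N$. For a latitude $h\in(0,\pi/2)$, place $\gamma(v_i)$ at latitude $h$ and longitude $2\pi i/p$. For a latitude $h'\in(0,\pi/2)$, place $\gamma(w_i)$ at latitude $-h'$ and the same longitude $2\pi i/p$. Map the edge $v_iw_i$ to the meridian arc from $\gamma(v_i)$ through the equator to $\gamma(w_i)$. Map $v_iv_{i+a}$ and $w_iw_{i+b}$ to the shorter great-circle arcs joining their endpoints. Since $a,b<p/2$, these endpoints are distinct and not antipodal, so the shorter arcs are well defined.

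The $v$'s and the $w$'s lie in opposite open hemispheres, and within each ring the longitudes are distinct. Hence the vertex map is injective.

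\emph{Reduction to one angle per ring.} Rotation about the polar axis by $2\pi/p$ maps the configuration to itself, sending $v_i\mapsto v_{i+1}$ and $w_i\mapsto w_{i+1}$. So it suffices to check condition~\ref{one_twenty} at $v_0$ and at $w_0$.

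At $v_0$, reflection in the meridian plane of $v_0$ swaps $v_a$ and $v_{-a}$ and fixes the meridian edge $v_0w_0$. The two ring edges therefore leave $\gamma(v_0)$ in directions symmetric about the meridian. Let $\alpha(h)$ be the angle between the northward tangent at $\gamma(v_0)$ and the departing direction of the arc toward $\gamma(v_a)$.

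The meridian edge departs southward. The three angles at $v_0$ are all $2\pi/3$ exactly when $\alpha(h)=\pi/3$, since then each ring edge is at angle $\pi-\pi/3=2\pi/3$ from south and the two ring edges are $2\alpha=2\pi/3$ apart. The same reasoning at $w_0$ shows that we need $\beta(h')=\pi/3$, where $\beta$ is the analogous angle measured from the southward direction. By the north–south symmetry, $\beta$ is the same function as $\alpha$ with $a$ replaced by $b$.

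\emph{Solving $\alpha(h)=\pi/3$.} The function $\alpha$ is continuous in $h$ on $(0,\pi/2)$ because the endpoints never become antipodal. We compute its two limits.

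As $h\to 0$, both points approach the equator, the great circle through them tends to the equator, and $\alpha(h)\to\pi/2$.

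As $h\to\pi/2$, the points approach the pole, where the sphere is locally a plane. In that limit $\gamma(v_0)$ and $\gamma(v_a)$ are points on a small circle about $N$ at central angle $\theta=2\pi a/p$. A planar isosceles-triangle computation gives that the chord toward $v_a$ makes angle $(\pi-\theta)/2$ with the direction toward the centre $N$. Hence $\alpha(h)\to(\pi-\theta)/2$.

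The hypothesis $p/6<a<p/2$ gives $\theta\in(\pi/3,\pi)$, so $(\pi-\theta)/2\in(0,\pi/3)$. The value $\pi/3$ therefore lies strictly between the two limits, and \cref{inter_mediate_value_thm} (applied on a compact subinterval of $(0,\pi/2)$) yields $h$ with $\alpha(h)=\pi/3$. The same argument using $b$ yields $h'$ with $\beta(h')=\pi/3$.

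\emph{Main obstacle.} The step needing most care is making the limit $h\to\pi/2$ rigorous. One way is to write $\alpha$ explicitly with spherical trigonometry: in the spherical triangle $N\,v_0\,v_a$, the two sides at $N$ have length $\pi/2-h$ and the angle at $N$ is $\theta$. This gives $\tan\alpha=\cot(\theta/2)/\sin h$, hence $\alpha=\arctan\bigl(\cot(\theta/2)/\sin h\bigr)$. This formula is monotone in $h$ and has limits $\pi/2$ and $(\pi-\theta)/2$ at the two ends, which settles continuity and both limits at once.

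We must also confirm that the shorter arc really departs from $\gamma(v_0)$ on the northern side of the parallel, so that $\alpha<\pi/2$. This holds because the great circle through two points of the same parallel bulges toward the nearer pole.
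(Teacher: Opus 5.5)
Your proof is correct and takes essentially the same route as the paper. Both place the two rings on parallels with $v_i$ directly above $w_i$, use the rotational symmetry to reduce to one angle per ring, and push each ring toward its pole until that angle reaches $2\pi/3$; the hypothesis $p/6<a,b<p/2$ enters in both through the limiting planar star-polygon angle $\pi-2\pi a/p<2\pi/3$. Your Napier-rule formula $\tan\alpha=\cot(\theta/2)/\sin h$ makes the continuity and limit steps rigorous, which the paper only sketches, and it even gives the solution explicitly as $\sin h=\cot(\theta/2)/\sqrt{3}$.
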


\medskip
\noindent\textbf{Proof.}
Place the vertices of $V=V(G_{a,b,p})$ at the vertices of a regular $p$-gon in the plane and draw the edges of $C_V$ as straight-line segments. 
This gives a collection of cycles in which the internal angle at each vertex is less than $2\pi/3$ (because $p/a<6$). 
Similarly, place the vertices of $W$ at the vertices of a regular $p$-gon and draw the edges of $C_W$ as straight-line segments; 
again, the internal angle at each vertex is less than $2\pi/3$ (because $p/b<6$).
 We start with the vertices of $V$ equally spaced on the equator of $\SSS^2$ and place the vertices of $W$ so that $w_i$ coincides with $v_i$ for each $i\in[p]$. 
Move the vertices of $V$ toward the north pole until $C_V$ is drawn with all vertices on a small circle $C$ and with internal angles $2\pi/3$ at each vertex. 
Since $C_V$ is invariant under the map $v_i\mapsto v_{i+1}$, the angle formed by $C$ and the edge $v_i v_{i-a}$ equals the angle formed by $C$ and $v_i v_{i+a}$, for every $i\in[p]$.

Next, move the vertices of $W$ toward the south pole until $C_W$ is drawn with internal angles $2\pi/3$. 
We now have a configuration with $v_i$ directly ``above'' $w_i$ for each $i\in[p]$. 
At each $v_i$, the three edges $v_i v_{i-a}$, $v_i v_{i+a}$, and $v_i w_i$ meet at $2\pi/3$ angles; 
similarly, at each $w_i$, the three edges $w_i w_{i-b}$, $w_i w_{i+b}$, and $w_i v_i$ meet at $2\pi/3$ angles. 
Therefore we obtain an $\SSS^2$-immersion of $G_{a,b,p}$ in which all vertices map to distinct points. \qed

\begin{figure}[H]
\subfloat[\centering ]{{\centering
\begin{tikzpicture}[every node/.style={draw,circle,very thick}]
\begin{scope}[scale=1.1]
\draw (-1,0) arc (180:360:2cm and 0.5cm);
\draw (-1,0) arc (180:0:2cm and 0.5cm);
\draw (1,0) circle (2cm);
 \draw (-0.2,1.3) node [circle,fill, inner sep=1pt] {};
 \draw (1,1.3) node [circle,fill, inner sep=1pt] {};
 \draw (0.4,1.7) node [circle,fill, inner sep=1pt] {};
 \draw (1.5,1.7) node [circle,fill, inner sep=1pt] {};
 \draw (2.19,1.3) node [circle,fill, inner sep=1pt] {};
\draw (-0.2,1.3) arc (180:0:0.6cm and 0.2cm);
\draw (1,1.3) arc (180:0:0.6cm and 0.2cm);
\draw (0.4,1.7) arc (180:0:0.55cm and 0.2cm);
\draw (0.4,1.7) arc (270:180:0.65cm and -0.45cm);
\draw (1.5,1.7) arc (270:360:0.7cm and -0.38cm);
\draw (-0.2,-1.4) node [circle,fill, inner sep=1pt] {};
\draw (1,-1.8) node [circle,fill, inner sep=1pt] {};
\draw (0.4,-1) node [circle,fill, inner sep=1pt] {};
\draw (1.5,-1) node [circle,fill, inner sep=1pt] {};
\draw (2.19,-1.4) node [circle,fill, inner sep=1pt] {};
\draw (-0.2,-1.4) arc (180:360:1.2cm and 0.3cm);
\draw (1.5,-1) arc (-5:110:1.25cm and -0.35cm);
\draw (2.19,-1.4) arc (90:180:1.75cm and -0.35cm);
\draw (0.4,-1) arc (180:90:0.6cm and -0.8cm);
\draw (1.5,-1) arc (-20:70:0.9cm and -0.65cm);
\draw (1,-1.8)--(1,1.3) ;
\draw (2.19,-1.4) arc (90:-90:0.5cm and -1.36cm);
\draw (-0.2,-1.4) arc (90:270:0.6cm and -1.36cm);
\draw[dashed] (1.5,-1) arc (90:-90:0.5cm and -1.36cm);
\draw[dashed] (0.4,-1) arc (90:270:0.6cm and -1.36cm);
\end{scope}
\end{tikzpicture}}}
\qquad
\centering
\subfloat[\centering ]{{\includegraphics[scale=0.60]{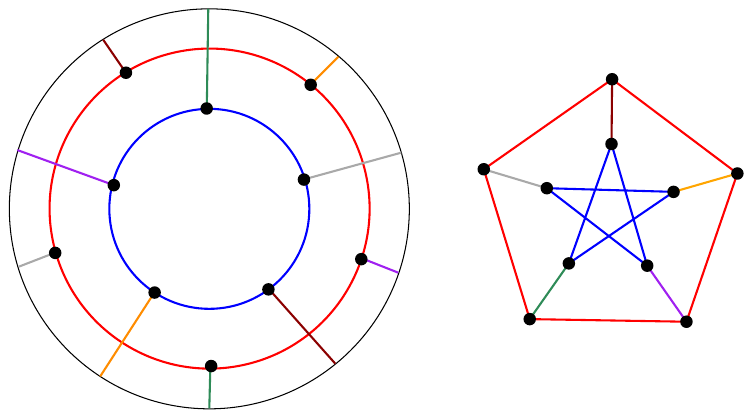} }}

\caption{Two different equiangular $\SSS^2$-immersion of the Petersen graph.}
\label{fig:Immersion_for_Petersen}
\end{figure}




\subsection[\SSS2-immersion to One and Two Points]{$\SSS^2$-immersion to One and Two Points}

In this section, we characterise the graphs that admit an equiangular $\SSS^2$-immersion to exactly one or two points. We begin by showing that any equiangular $\SSS^2$-immersion to two points is an equiangular $\SSS^2$-immersion to antipodal points.
\begin{lem}\label{antipodal_immersion}
    Let $G$ be a cubic graph with two-points equiangular $\SSS^2$-immersion $\gamma$ that is $\gamma(V(G)) = \{p, q\}$, then the points $q$ and $p$ are antipodal.  
\end{lem}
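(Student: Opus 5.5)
Assume $p\neq q$ and, towards a contradiction, $q\neq -p$. I will take $G$ connected, since otherwise each component could be immersed at its own single point and two such points need not be antipodal. The plan is to exploit the rigidity that a non-antipodal pair $p,q$ imposes on every edge joining them, and then to derive a contradiction from the equiangularity condition~\ref{one_twenty} together with the flow obtained from \cref{thm:S2}.

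First I classify edge images. Let $X:=\gamma^{-1}(p)$ and $Y:=\gamma^{-1}(q)$. By connectivity there is at least one edge between $X$ and $Y$. If $\init(e),\ter(e)\in X$, then the directed geodesic arc $\gamma(e)$ starts and ends at $p$. A simple curve cannot have length $0$, so $\gamma(e)$ is a full great circle through $p$. It departs $p$ in some tangent direction $\mathbf{d}$ and arrives back along $\mathbf{d}$, so seen from its terminal vertex it departs in direction $-\mathbf{d}$. If instead $e$ joins $X$ to $Y$, then $\gamma(e)$ lies on the \emph{unique} great circle $C$ through $p$ and $q$, which is unique because $q\neq\pm p$. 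Hence its departure direction at $p$ is $\pm\mathbf{t}$, where $\mathbf{t}$ is the tangent at $p$ pointing along the short arc of $C$ towards $q$. Similarly its departure direction at $q$ is $\pm\mathbf{s}$, with the sign determined by whether the short or the long arc of $C$ is used. Since the three departure directions at a vertex are pairwise at angle $2\pi/3$, no vertex can have both $\mathbf{t}$ and $-\mathbf{t}$. In particular, every vertex of $X$ has at most one neighbour in $Y$.

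Next I sum the departure directions. The three unit departure directions at each vertex are coplanar and pairwise at $2\pi/3$, so by \cref{coplanar} they sum to $\mathbf{0}$. Summing over all vertices of $X$, each internal edge contributes $\mathbf{d}+(-\mathbf{d})=\mathbf{0}$. Therefore the cut edges contribute a net of $\mathbf{0}$, and exactly half of them use the short arc. The same identity also follows from applying \cref{lem:cut} to the flow of \cref{thm:S2}, since every cut edge carries flow $\pm\,\mathbf{n}$ with $\mathbf{n}=p\times q/\|p\times q\|$. To obtain a contradiction I then intend to compare the two ends of a single cut edge. On the short arc the flow value computed at $p$ (the rotation of $\mathbf{t}$ about $p$) and the value computed at $q$ (the rotation of $\mathbf{s}$ about $q$) should have to be equal. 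I would check whether the orientation conventions force the opposite sign, which would show that no arc of $C$ can be an edge at all, and hence $q=\pm p$.

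The last step is the main obstacle. The counting argument above is self-consistent, so the contradiction must come from the sign comparison at the two ends of an edge of $C$, or from the coplanarity of the three flow values at a vertex of $X$ that has exactly one neighbour in $Y$. I would first compute explicitly in coordinates with $p=e_3$ and $q$ in the $xz$-plane. If that computation yields no contradiction, I would test a small example such as $K_{3,3}$ with $X$ and $Y$ its two sides. If a non-antipodal two-point immersion exists there, the lemma needs an additional hypothesis, for example that all edges are proper arcs.
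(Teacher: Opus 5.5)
\textbf{The final step cannot be supplied, because the statement is false even for connected $G$.} You were right that disconnected graphs already refute it, but connected ones do too.

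The sign comparison you plan does give opposite signs: $q\times\mathbf s=-\,p\times\mathbf t$, where $\mathbf s$ is the tangent at $q$ pointing along the short arc towards $p$. But this is just Observation~\ref{reversing}. Seen from $q$ the edge is traversed backwards, so the value read off there is $-\varphi(e)$, and there is no contradiction. The balance you derived (half of the cut edges on the short arc) is realizable. Here is a counterexample:
\begin{itemize}
\item Take the cube, with $4$-cycles $u_1u_2u_3u_4$ and $w_1w_2w_3w_4$ and edges $u_iw_i$. Fix $q\neq\pm p$, and send every $u_i$ to $p$ and every $w_i$ to $q$.
\item Map $u_iw_i$ to the short arc of $C$ for odd $i$ and to the long arc for even $i$.
\item Measure tangent directions at $p$ by their angle from $\mathbf t$. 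Odd $u_i$ leave $p$ towards $w_i,u_{i+1},u_{i-1}$ at angles $0,2\pi/3,4\pi/3$. Even $u_i$ leave $p$ towards $w_i,u_{i-1},u_{i+1}$ at angles $\pi,5\pi/3,\pi/3$.
\item Map each cycle edge to the full great circle through $p$ in the prescribed direction. This is consistent because such a circle leaves its two ends in opposite directions.
\item Do the same at $q$ with $\mathbf s$ in place of $\mathbf t$.
\end{itemize}
Every vertex then sees three directions pairwise at angle $2\pi/3$. So this is a two-point equiangular $\SSS^2$-immersion with $q\neq -p$.

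Your planned test could not have exposed this. In $K_{3,3}$ with $X,Y$ its two sides, every vertex has three neighbours across, which your own step already excludes. In fact every two-point equiangular immersion of $K_{3,3}$ is antipodal. A counterexample needs edges inside $X$ and inside $Y$, mapped to full great circles.

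Your edge classification and the ``at most one neighbour across'' observation are the paper's Case~1. In Case~2 the paper does not reach a contradiction either. It rotates everything at $q$, including the full great circles through $q$, about the axis $p\times q$ until $q$ lands on $-p$. It then replaces each $p$--$q$ arc by the half of $C$ leaving $p$ in the same direction, and one checks that every angle is preserved. This proves only that $\gamma$ can be \emph{replaced} by an equiangular immersion onto an antipodal pair. That is all the proof of Theorem~\ref{thm::bipartite_immersion} uses, and it is the statement you should aim for.

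Alternatively, your suggested extra hypothesis suffices. If every edge image is a proper arc, there are no edges inside $X$ or $Y$, so every vertex has three neighbours across, and your step then forces $q=-p$.
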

\begin{proof}
Since $\gamma$ is a two-point $\SSS^2$-immersion, there exists at least one vertex $v \in V(G)$ such that $\gamma(v) = p$, and one of its three neighbours $x, y$ or $z$ that $\gamma$ sends to $q$, otherwise we do not have two-points equiangular $\SSS^2$-immersion. We have two cases. 

\textbf{Case $(1)$:} At least two of $\gamma(x), \gamma(y)$, and $\gamma(z)$ equals to $q$, say $x$ and $y$. Since $\gamma$ is an equiangular $\SSS^2$-immersion the arcs $\gamma(vx)$ and $\gamma(vy)$ belong to two distinct great circles that intersect in $p$ and $q$. Any two distinct great circles intersect at antipodes. Thus, $q = -p$.

\textbf{Case $(2)$:} Exactly one of $\gamma(x), \gamma(y)$, and $\gamma(z)$ is equal to $q$, say $x$. Moreover, suppose $q \neq - p$, otherwise, we are done. Let $C$ be the great circle containing $\gamma(vx)$. In this case, any point of $C \sm p$ satisfies the conditions of equiangular $\SSS^2$-immersion for the edge $e=vx$. Let $\mathdefin{G_q}$ be the union of $\gamma(e)$ for $e = uw \in E(G)$ such that $\gamma(u) = \gamma(w) = q$ and $\gamma(u) = p, \gamma(w) = q$  ; That's it $G_q$ is the drawing of the subgraph of $G$ whose  at least one of its edges' endpoints is mapped to $q$.
Let $R$ be a rotation that sends $q$ to $-p$ along the shortest arc between $q$ and $-p$. Apply $R$ to every point in $G_q$. Since $R$ is an isometry, $R$ preserves the angles between any arc from $q$ to $q$. For any arc going from $p$ to $q$, we can replace it by an arc from $p$ to $-p$ while preserving its direction along $C$. Since the arc from $p$ to $q$ did not change angles with the arcs going out of $p$, it preserves the fact that $\gamma$ is an equiangular $\SSS^2$-immersion.
\end{proof}


Now, we are ready to show that bipartite graphs are exactly the graphs that have one or two points equiangular $\SSS^2$-immersion. In order to prove the next theorem, we make use of the following well known fact about regular bipartite graphs.
\begin{proposition}\label{bipartite_pmatching}\rm\cite[Corollary 2.1.3.]{diestel2017graph}
Every $k$-regular $(k \geq 1)$ bipartite graph has a perfect matching.  
\end{proposition}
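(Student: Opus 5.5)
We would derive this from Hall's marriage theorem, which we take as standard: a bipartite graph with bipartition $(A,B)$ has a matching saturating $A$ if and only if $|N(S)|\ge |S|$ for every $S\subseteq A$. Let $G$ be $k$-regular and bipartite with bipartition $(A,B)$, where $k\ge 1$. We allow parallel edges, since the argument only counts edge-ends and is unaffected by them.

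\textbf{Step 1: $|A|=|B|$.} Every edge has exactly one end in $A$ and one end in $B$. Counting the edges from each side gives $k|A|=|E(G)|=k|B|$. Since $k\ge 1$, this yields $|A|=|B|$. Consequently, any matching that saturates $A$ also saturates $B$, and so is perfect.

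\textbf{Step 2: Hall's condition.} Fix $S\subseteq A$ and let $F$ be the set of edges with an end in $S$. Since $G$ is $k$-regular and no edge has both ends in $A$, we have $|F|=k|S|$. Every edge of $F$ has its other end in $N(S)$. Each vertex of $N(S)$ has degree $k$, so it is incident to at most $k$ edges of $F$. Hence $k|S|=|F|\le k|N(S)|$, and dividing by $k$ gives $|N(S)|\ge|S|$.

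\textbf{Step 3: conclusion.} Hall's theorem now gives a matching saturating $A$, which is perfect by Step 1.

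We expect no real obstacle, since the argument is a double count feeding into Hall's theorem. The only point needing care is the inequality in Step 2: the bound comes from the fact that a vertex of $N(S)$ may also have neighbours outside $S$, so it receives at most $k$ of the edges of $F$, not exactly $k$. If a self-contained argument is preferred, the alternative is to replace Hall's theorem by König's theorem. A vertex cover $C$ must cover all $k|A|$ edges, and each vertex of $C$ covers at most $k$ edges, so $|C|\ge |A|$. By König's theorem the maximum matching then has size at least $|A|$, and it is perfect by Step 1.
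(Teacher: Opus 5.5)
Your proof is correct and is the standard argument behind the cited result (Diestel, Corollary 2.1.3, which the paper quotes without proof): double-count edges to get $|A|=|B|$ and $k|S|\le k|N(S)|$, then apply Hall's theorem. Your remarks on parallel edges and the K\"onig-theorem variant are also sound, though neither is needed.
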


\begin{thm}\label{thm::bipartite_immersion}
Let $G$ be a cubic graph. Then the following statements are equivalent.
\begin{compactenum}
\item[(a)] $G$ has a one-point equiangular $\SSS^2$-immersion.
\item[(b)] $ G $ has a two-point equiangular $ \mathbb{S}^2 $-immersion.
\item[(c)] $G$ is bipartite.
\end{compactenum}
\end{thm}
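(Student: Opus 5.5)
The plan is to prove $(c)\Rightarrow(a)$ by an explicit construction, then $(a)\Rightarrow(b)$, and finally close the cycle with $(b)\Rightarrow(c)$ (and also $(a)\Rightarrow(c)$) by reducing to $\SSS^1$-flows via \cref{thm:S2} and \cref{cubic}.

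\textbf{$(c)\Rightarrow(a)$.} Let $G$ be cubic bipartite with sides $X,Y$, and orient every edge from $X$ to $Y$.
\begin{itemize}
\item By \cref{bipartite_pmatching}, $G$ has a perfect matching $M_1$. Then $G-M_1$ is $2$-regular bipartite, so it has a perfect matching $M_2$, and $M_3:=E(G)\sm(M_1\cup M_2)$ is also a perfect matching. Hence $E(G)=M_1\cup M_2\cup M_3$, and each vertex meets exactly one edge of each $M_i$.
\item Fix $p\in\SSS^2$ and three unit tangent vectors $\mathbf t_1,\mathbf t_2,\mathbf t_3$ at $p$, pairwise at angle $2\pi/3$. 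Let $C_i$ be the great circle through $p$ with tangent $\mathbf t_i$.
\item Map every vertex to $p$. Map every edge $xy\in M_i$, with $x\in X$, to the full great circle $C_i$, traversed from $p$ starting in direction $\mathbf t_i$.
\item At $x\in X$ the departing directions are $\mathbf t_1,\mathbf t_2,\mathbf t_3$. At $y\in Y$ the arcs arrive, so their departing directions are $-\mathbf t_1,-\mathbf t_2,-\mathbf t_3$. In both cases all pairwise angles are $2\pi/3$, so this is a one-point equiangular immersion.
\end{itemize}

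\textbf{$(a)\Rightarrow(b)$.} Start from the immersion just built, or from any one-point immersion. Apply \cref{lem:upper_half} to every vertex of $Y$, sending it to $-p$. I would check that the edges are modified consistently when both endpoints of an edge are moved, but here only $Y$-vertices move and $Y$ is independent. Angles are preserved, so the result is a two-point equiangular immersion.

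\textbf{$(b)\Rightarrow(c)$ and $(a)\Rightarrow(c)$.} Suppose $\gamma$ is a one- or two-point equiangular immersion.
\begin{itemize}
\item By \cref{antipodal_immersion}, we may assume $\gamma(V(G))\subseteq\{p,-p\}$.
\item Every edge arc joins two points of $\{p,-p\}$, so it lies on a great circle through $p$.
\item Build the $\SSS^2$-flow $\varphi$ exactly as in the forward direction of \cref{thm:S2}. Each $\varphi(e)$ is a unit normal to the plane of a great circle through $p$, so $\varphi(e)\perp p$.
\item Thus all flow values lie in the unit circle of the plane $p^\perp$, and $\varphi$ is an $\SSS^1$-flow on the cubic graph $G$.
\item By \cref{cubic}, $G$ is bipartite.
\end{itemize}

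\textbf{Main obstacle.} The delicate step is the degenerate geometry in the one-point case.
\begin{itemize}
\item An edge whose two endpoints map to the same point must be either a full great circle or a single point.
\item A single-point arc has no tangent direction, so condition~\ref{one_twenty} forces the full great circle.
\item I must verify that the completion, and hence the normal $\varphi(e)$, is still well defined for such arcs. Since the arc is an entire great circle through $p$, it is.
\item A similar check is needed for two-point arcs joining $p$ to $-p$: such an arc is a half-circle, and its great circle is not determined by its endpoints alone. Here I would use that the immersion specifies the arc as a set, which pins down its great circle.
\end{itemize}
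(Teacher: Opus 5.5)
Your proposal is correct and takes essentially the paper's route: a decomposition into three perfect matchings whose edges leave in tangent directions at mutual angle $2\pi/3$, \cref{lem:upper_half} to pass between the one-point and two-point immersions, and, for the converse, \cref{antipodal_immersion} together with the forward direction of \cref{thm:S2} and \cref{cubic}. The only difference is cosmetic: you build the one-point immersion directly from full great circles and then move $Y$ to $-p$, whereas the paper builds the two-point immersion from half great circles joining $p$ to $-p$ and uses \cref{lem:upper_half} to collapse it to a single point.
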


\begin{proof}
For the direction $(a) \Longrightarrow (b)$, by applying \cref{lem:upper_half} to any $v \in V(G)$, we get an $\SSS^2$-immersion.

For the reverse direction $(b) \Longrightarrow (a)$, suppose we have a two-point $\SSS^2$-immersion $\gamma$,  by \cref{antipodal_immersion} $\gamma$ in an  ``antipodal $\SSS^2$-immersion''. Then, applying \cref{lem:upper_half} to vertices that are mapped to one of the antipodes give us a one-point $\SSS^2$-immersion. 

Now, we prove $(c) \Longleftrightarrow (b)$. For the forward direction, consider a cubic bipartite graph $G = (A \cup B, E)$. Let $\overrightarrow{G}$ be an orientation of $G$ such that every edge $e = ab \in E(G)$ with $a \in A$ and $b \in B$, $e$ is oriented from $a$ to $b$. We want to construct a two-point $\SSS^2$-immersion $\gamma$. Let $p$ be any point in $\SSS^2$. Set $\gamma(a) = p$ for every $a \in A$ and $\gamma(b) = -p$ for every $b \in B$. Furthermore, let $C_1, C_2$, and $C_3$ be three great circles with the smallest angle between any of them equals to $\pi/3$. Precisely, we have six geodesic arcs from $p$ to $-p$ along $C_1, C_2$, and $C_3$. Choose three of them at a distance $2\pi/3$, call them $C'_1, C'_2$, and $C'_3$. Note that $C'_i(0) = p$ to $C'_i(1) = -p$ for $ 1 \leq i \leq 3$.
Since $G$ is cubic bipartite graph, by \cref{bipartite_pmatching} we have three perfect matchings $M_1, M_2$, and $M_3$. For every $e_i =ab \in M_i$, let $\gamma(\overrightarrow{e_i}) = C'_i$. Now at a vertex $a \in A$, the outgoing edges each belongs to exactly one of the $M_i$, and its corresponding arc is at $2\pi/3$ from the other ones. Similarly for every vertex $b\in B$. In other words, for each vertex $a \in A$ with neighbors $b_1, b_2,$ and $b_3$, the spherical angle between $\gamma^{(1/3)}(ab_i)$ and $\gamma^{(1/3)}(ab_j)$ for $i,j\in \{1, 2, 3\}$ and $i \neq j$ is equal to $2\pi/3$ since each of $\gamma(ab_i) \subset C'_j$ for some $j \in \{1,2, 3\}$. Similar argument applies for the arcs around the vertices of $B$.

For the backward direction, suppose we have a two-point $\SSS^2$-immersion $\gamma$ of a cubic graph $G$. By \cref{antipodal_immersion}, we can assume that $\gamma(V(G)) = \{p, -p\}$ for some $p \in \SSS^2$. Note that any great circle $C$ that contains $p$ and $-p$ is contained in a plane $\pi_C \subset \mathbb{R}^3$ whose normal vector $v$ is perpendicular to the line defined by $\{p, -p\}$. The set of these normal vectors defines are subset of a circle $S \subset \SSS^2$. By similar reasoning of the proof of \cref{thm:S2}, $G$ has an $\SSS^2$-flow whose values belongs to the circle $S$. In other words, $G$ has an $\SSS^1$-flow. Since $G$ is cubic and has $\SSS^1$-flow, by \cref{cubic} $G$ is bipartite.
\end{proof}

Using \cref{cubic}, we can conclude with the following corollary.
\begin{cor}
For a cubic graph $G$, the followings are equivalent,
\begin{compactenum}
    \item Two-point $\SSS^2$ immersion.
    \item $3$-flow.
    \item $R_3$-flow.
    \item $\SSS^1$-flow.  
\end{compactenum}
\end{cor}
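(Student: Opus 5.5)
The corollary asserts four equivalent conditions for a cubic graph: a two-point $\SSS^2$-immersion, a $3$-flow, an $R_3$-flow, and an $\SSS^1$-flow. I would prove it by chaining results already established. Item 1 is to be read as a two-point \emph{equiangular} $\SSS^2$-immersion, as in \cref{thm::bipartite_immersion}.

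\emph{Equivalence of the three flow conditions.} Tutte's theorem, recalled in the introduction, says that having a $3$-flow is equivalent to having a $\ZZ_3$-flow. \cref{cubic} gives $\ZZ_3$-flow $\iff$ $R_3$-flow for every graph. For cubic $G$, \cref{cubic} also gives that both are equivalent to having an $\SSS^1$-flow, and that all three hold exactly when $G$ is bipartite. Hence items 2, 3 and 4 are mutually equivalent, and each is equivalent to $G$ being bipartite.

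\emph{Linking in the immersion condition.} By \cref{thm::bipartite_immersion}, $(b)\iff(c)$: a cubic graph has a two-point equiangular $\SSS^2$-immersion if and only if it is bipartite. Combined with the previous paragraph, this places item 1 in the same equivalence class as items 2--4.

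\emph{Where care is needed.} No step is a real obstacle; the only subtle points are bookkeeping. First, one must use that $G$ is cubic, since $\SSS^1$-flow $\Rightarrow$ $\ZZ_3$-flow fails in general and is supplied by \cref{cubic} only in the cubic case. Second, the $3$-flow $\iff$ $\ZZ_3$-flow step must be invoked explicitly, because \cref{cubic} is stated for $\ZZ_3$-flows rather than integer $3$-flows.
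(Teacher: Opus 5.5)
Your proposal is correct and follows the paper's argument. The paper likewise obtains the corollary by combining \cref{thm::bipartite_immersion}, specifically (b)$\iff$(c), with \cref{cubic} and Tutte's $3$-flow/$\ZZ_3$-flow equivalence, so that all four conditions are equivalent to $G$ being bipartite; reading item 1 as an \emph{equiangular} two-point immersion is the right interpretation.
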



\subsection[Combining Cubic Graphs with S²-flows]{Combining Cubic Graphs with $\SSS^2$-flows}

Let $v$ be a vertex in a graph $G$ such that $v$ has $k$ neighbours. We define \defin{vertex splitting} at $v$ by replacing $v$ with $k$ vertices, $v_1,...,v_k$. Each vertex $v_i$ for $1 \leq i \leq k$ inherits exactly one neighbour of $v$. In other words, we replaced $v$ with $k$ leaves, each is attached to one of its neighbour.

Given two graphs $G$ and $H$ such that each contains a vertex of degree $k$ for some $k \in \mathbb{N}$. We define \defin{injecting} $H$ into $G$ at a vertex $v \in V(G)$ of degree $k$ as follow. First, we perform a vertex splitting on $v$ and on a vertex $w \in V(H)$ of the same degree. Now, we have $k$ leaves in $G$, $v_1,..., v_k$ and $k$ leaves in $H$, $w_1,..., w_k$. For each $1\leq i \leq k$, we identify $v_i$ in $G$ to the unique neighbour of $w_i$ in $H$, and identify $w_i$ to the unique neighbour of $v_i$ in $G$, we suppress the identified vertex to form an edge. We denote the new graph as $H \vartriangleright G$.

\begin{thm}\label{thm::injection_thm}
    Let $G$ and $H$ be two cubic graphs with $\SSS^2$-flows $g$ and $h$ respectively. Then, the graph $H \vartriangleright G$ created by injecting $H$ into a vertex $v$ of $G$ has an $\SSS^2$-flow.  
\end{thm}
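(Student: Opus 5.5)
We work directly with the flows rather than with immersions. The plan is to glue $g$ restricted to $G-v$ with a suitably rotated copy of $h$ restricted to $H-w$, using the fact that the orthogonal group acts on $\SSS^2$-flows.

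First fix the combinatorics. Let $v\in V(G)$ have neighbours $x_1,x_2,x_3$, and let $w\in V(H)$ be the chosen vertex with neighbours $y_1,y_2,y_3$. Both have degree $3$. By the definition of injecting, $H\vartriangleright G$ is obtained from the disjoint union $(G-v)\cup(H-w)$ by adding the three edges $x_iy_i$ for $i\in\{1,2,3\}$ (for some fixed bijection of the neighbours). By \cref{reversing} we may orient $\vec G$ so that $vx_1,vx_2,vx_3$ all leave $v$. Likewise we orient $\vec H$ so that $y_1w,y_2w,y_3w$ all enter $w$. Then KCL at $v$ gives $\sum_i g(vx_i)=\mathbf 0$, and KCL at $w$ gives $\sum_i h(y_iw)=\mathbf 0$. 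Since each triple consists of unit vectors, \cref{coplanar} says each triple lies in a plane through $\mathbf 0$ with pairwise angles $2\pi/3$.

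The key step is to find an orthogonal map $R\in O(3)$ with $R\,h(y_iw)=g(vx_i)$ for every $i$. Any two such equiangular coplanar triples of unit vectors are congruent via an orthogonal map compatible with the labelling. Indeed, the triangles they span are both equilateral, so the map sending $h(y_1w)\mapsto g(vx_1)$ and $h(y_2w)\mapsto g(vx_2)$ extends to a linear isometry of $\RR^3$. Concretely, one sends an orthonormal frame built from $h(y_1w),h(y_2w)$ and their normal to the corresponding frame built from $g(vx_1),g(vx_2)$. By linearity it also sends $h(y_3w)=-h(y_1w)-h(y_2w)$ to $g(vx_3)$. The resulting map may be a reflection; that is harmless, because KCL is linear, so $R\circ h$ is again an $\SSS^2$-flow of $\vec H$ (it is nowhere zero and takes unit values). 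This is the step I expect to need the most care. The real worry is whether a bijection of the neighbours forced by the construction could prevent matching the labels. It cannot, because any permutation of an equilateral triple is realised by an orthogonal map, so no choice of bijection causes trouble.

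Finally, define $\psi$ on $H\vartriangleright G$ as follows. Set $\psi(e)=g(e)$ for $e\in E(G-v)$ and $\psi(e)=R\,h(e)$ for $e\in E(H-w)$. For each new edge $x_iy_i$, oriented from $y_i$ to $x_i$, set $\psi(x_iy_i)=g(vx_i)=R\,h(y_iw)$. Now check KCL vertex by vertex.
\begin{itemize}
\item At a vertex of $G-v$ other than the $x_i$, nothing changed, so KCL holds.
\item At $x_i$, the edge $vx_i$ entering $x_i$ with value $g(vx_i)$ has been replaced by $y_ix_i$ entering $x_i$ with the same value, so KCL holds.
\item At $y_i$, the edge $y_iw$ leaving $y_i$ with value $R\,h(y_iw)$ has been replaced by $y_ix_i$ leaving $y_i$ with the same value, so KCL holds.
\end{itemize}
All values are unit vectors, so $\psi$ is an $\SSS^2$-flow of $H\vartriangleright G$.
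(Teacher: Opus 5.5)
Your proposal is correct and follows essentially the same approach as the paper: apply to $h$ an orthogonal map that sends the equiangular triple of flow values at $w$ onto the triple at $v$, and glue the result to $g$ on $G-v$. Your version is more careful than the paper's, because you fix the orientations at $v$ and $w$ and match the labels of the two triples explicitly, whereas the paper's rotation argument leaves both points implicit.
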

\begin{proof}
    Let $w$ be a vertex of $H$, as $d(w) = d(v) = 3$ injecting $H$ into $G$ at $v$ is well defined. Let $a, b,$ and $c$ be the vectors in $\SSS^2$ that $g$ maps the incident edges at $v$ to. Since $g$ is a flow, we have $a + b + c = 0$. By \cref{coplanar}, $a, b,$ and $c$ are coplanar with angle $2\pi/3$ between them. Similarly, let $a',b'$ and $c'$ be the vectors in $\SSS^2$ that $h$ maps the edges around $w$ to. Note that there is bijection that maps $\{a', b', c'\}$ to $\{a, b, c\}$, precisely this bijection is a rotation of normal vectors containing $\{a', b', c'\}$ to the normal vector of the plane containing $\{a, b, c\}$. Then, it followed by another rotation to align the vector $a'$ to $a$. Denote this rotation by $\theta$. Now, we define $f$ from $E(H \vartriangleright G)$ as $f(e) = g(e)$ for $e \in E(H\vartriangleright G) \sm E(H)$, and $f(e) = \theta \circ h(e)$ for $e\in E(H)$. 
    It is clear that at every vertex of $V(G) \cap V(H \vartriangleright G)$ the flow conditions holds for $f$. For every vertex $w \in V(H) \cap V(H \vartriangleright G)$, 
    \begin{align*}
        f(e_1) + f(e_2) + f(e_3) & = \theta \circ g(e_1) + \theta \circ g(e_2) + \theta \circ g(e_3)\\
        & = \theta \circ (g(e_1) + g(e_2) + g(e_3)) \text{ (by the linearity of $\theta$)} \\
        & = \mathbf{0}    
    \end{align*}

     where $e_1, e_2, e_3$ are the edges incident to $w$.    
\end{proof}

\begin{lem}\label{lem::k_4}
    The complete graph $K^4$ has an equiangular $\SSS^2$-immersion, consequently an $\SSS^2$-flow.
\end{lem}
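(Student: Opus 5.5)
We will exhibit an explicit equiangular $\SSS^2$-immersion of $K^4$ and then invoke \cref{thm:S2} to obtain the $\SSS^2$-flow. The natural candidate is the projection of a regular tetrahedron onto the sphere. Place the four vertices of $K^4$ at the points $\gamma(v_i)=\mathbf{t}_i/\sqrt{3}$, where
\[
\mathbf{t}_1=(1,1,1),\quad \mathbf{t}_2=(1,-1,-1),\quad \mathbf{t}_3=(-1,1,-1),\quad \mathbf{t}_4=(-1,-1,1).
\]
For each edge $v_iv_j$, let $\gamma(v_iv_j)$ be the shorter geodesic arc joining $\gamma(v_i)$ to $\gamma(v_j)$. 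The points are pairwise non-antipodal, so this arc is unique. The resulting drawing is the spherical tetrahedron, whose four faces are congruent equilateral spherical triangles. Conditions \ref{vertex_to_sphere} and \ref{edge_to_geodesic} then hold by construction.

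The key step is verifying condition \ref{one_twenty} at each vertex. By the symmetry group of the regular tetrahedron (the rotations permute the vertices transitively, and the stabiliser of a vertex cyclically permutes its three neighbours), it suffices to check one vertex, say $\gamma(v_1)$. The rotation by $2\pi/3$ about the axis $\mathbf{t}_1$ fixes $\gamma(v_1)$ and cyclically permutes $\gamma(v_2),\gamma(v_3),\gamma(v_4)$. It therefore cyclically permutes the three initial tangent directions of the arcs leaving $\gamma(v_1)$. These tangent directions are the projections of $\mathbf{t}_j-\mathbf{t}_1$ onto the plane orthogonal to $\mathbf{t}_1$, and they are nonzero and distinct. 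Three distinct unit vectors in a plane that are cyclically permuted by a rotation of order three must be pairwise at angle $2\pi/3$. Hence the spherical angles between $\gamma^{(1/3)}(v_1v_j)$ and $\gamma^{(1/3)}(v_1v_k)$ all equal $2\pi/3$.

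As an alternative sanity check, one can compute the projections directly. For example, $\mathbf{t}_2-\mathbf{t}_1=(0,-2,-2)$ has projection $(0,-2,-2)+\tfrac{4}{3}(1,1,1)$, and its sum with the other two projections is zero because $\sum_j(\mathbf{t}_j-\mathbf{t}_1)=-4\mathbf{t}_1$, which projects to $\mathbf{0}$. Combined with equal lengths, \cref{coplanar} then gives the angles.

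The only mild obstacle is the bookkeeping between spherical angles and tangent vectors. The proof of \cref{thm:S2} already identifies the spherical angle at $\gamma(v)$ with the angle between the initial tangent vectors, so we cite that identification. Having obtained the equiangular immersion, \cref{thm:S2} yields an $\SSS^2$-flow of $K^4$.
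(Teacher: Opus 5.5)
Your proof is correct. It reaches the same configuration as the paper, the regular spherical tetrahedron, but by a genuinely different route.

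The paper places $v_0$ at the north pole and $v_1,v_2,v_3$ equally spaced on the equator. There the angle at each $v_i$ between its two equatorial arcs is $\pi$. It then slides $v_1,v_2,v_3$ down their meridians toward the south pole, where that angle tends to $\pi/3$, and uses the intermediate value theorem to stop when the angle equals $2\pi/3$. Finally it argues from the symmetry of the motion that the meridian arc to $v_0$ still bisects the complementary angle $4\pi/3$, so both remaining angles at $v_i$ are $2\pi/3$; the angles at $v_0$ are $2\pi/3$ throughout because the meridians are equally spaced.

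You instead write down coordinates and obtain every angle at once from the order-three rotation about each vertex axis, with a direct tangent-vector computation as a check. This is fully explicit and avoids the informal steps of the deformation argument, namely continuity of the angle along the motion and persistence of the bisector property. Your coordinates even let you bypass \cref{thm:S2}: setting $\varphi(v_iv_j):=\mathbf{t}_i\times\mathbf{t}_j/(2\sqrt{2})$ gives unit vectors, and at each $v_i$ the outgoing values sum to $\mathbf{t}_i\times(-\mathbf{t}_i)/(2\sqrt{2})=\mathbf{0}$ because $\sum_j\mathbf{t}_j=\mathbf{0}$.

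The paper's argument, for its part, is coordinate-free. It illustrates the continuity technique the section announces and also uses for the quasi-Petersen graphs.
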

\begin{proof}
    We want to define an $\SSS^2$-immersion $\gamma$ for $K^4$.
    Let $v_0, v_1, v_2,$ and $v_3$ be the four vertices of $K^4$, and set $\gamma(v_0)= (0, 0, 1)$. Then, place $v_1, v_2$, and $v_3$ at equal distance from each other at the equatorial; the great circle $C_\pi$ defined by the intersection of $\SSS^2$ and the plane $\pi$ with zero $z$-coordinate. Let $\gamma(v_i)$ for $i\in \{1, 2, 3\}$ be defined as described above. Between any consecutive $\gamma(v_i)$ for $i\in \{1, 2, 3\}$ draw an arc along $C_\pi$ (this arc corresponds to the shortest possible arc), this arc between $\gamma(v_i)$ and $\gamma(v_j)$ corresponds to the edge $v_iv_j$ for $i\neq j$ and $i, j \in \{1, 2, 3\}$. Between $\gamma(v_0)$ and $\gamma(v_i)$ for $i\in \{1, 2, 3\}$ draw an arc along the unique great circle containing $\gamma(v_0)$ and $\gamma(v_i)$, call such circle $C_{v_0v_i}$. Note that $\gamma$ is not an $\SSS^2$-immersion as it is defined now; the angle between $\gamma(v_iv_0)$ and $\gamma(v_iv_j)$ for $i, j \in \{1, 2, 3\}$ is $\pi/2$, and the angle between $\gamma(v_iv_j)$ and $\gamma(v_iv_k)$ is equal to $\pi$. 

    To make $\gamma$ an $\SSS^2$-immersion, we are going to move at a constant speed each of the $\gamma(v_i)$ for $i \in \{1, 2, 3\}$ toward $-\gamma(v_0)$ along $C_{v_0v_i}$ as defined above. As this movement is going extend the arc of $\gamma(v_0v_i)$ to the new position of $\gamma(v_i)$ for $i \in \{1, 2, 3\}$. As the $\gamma(v_i)$ for $i \in \{1, 2, 3\}$ approaches $-\gamma(v_0)$, the angles between the $\gamma(v_i)$s is approaching $\pi/3$. By the intermediate value theorem, as the angle between $\gamma(v_i)$s changes between $\pi$ and $\pi/3$ there must be a position in which the $\gamma(v_i)$s attain an angle of $2\pi/3$; let such a position be the final position of each of the $\gamma(v_i)$ for $i \in \{1, 2, 3\}$. It remains to show that the angle between $\gamma(v_iv_0)$ and $\gamma(v_iv_j)$ for $i \neq j \in \{1, 2, 3\}$ is $2\pi/3$. By the symmetry of movement, as $\gamma(v_i)$ for $i \in \{1, 2, 3\}$ is moving toward $-\gamma(v_0)$ as it is described above, the angle between the arc joining $\gamma(v_0)$ to $\gamma(v_i)$ and the arcs joining $\gamma(v_i)$ to its right and left neighbors is increasing by the same amount. Therefore, the arc $\gamma(v_0v_i)$ remain an angular bisector for the angle $\gamma(v_j)\gamma(v_i)\gamma(v_k)$; consequently at the final position the value of $\gamma(v_k)\gamma(v_i)\gamma(v_0)$ equals to $(2\pi - 2\pi/3)/2 = 2\pi/3$.
\end{proof}

In the following, blowing up a vertex in a cubic graph by a triangle is to replace the vertex by a cycle of length three and give exactly one neighbour to each of its vertex. 
\begin{thm}
Let $G$ be a cubic graph. If $G$ admits an $\SSS^2$-flow, then the graph obtained by blowing up any vertex in $V(G)$ into a triangle also admits an $\SSS^2$-flow.
\end{thm}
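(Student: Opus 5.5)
Blowing up a vertex $v$ of $G$ into a triangle gives exactly the graph $K^4 \vartriangleright G$, i.e.\ the graph obtained by injecting $K^4$ into $G$ at $v$. So the plan is to deduce the statement from \cref{thm::injection_thm} together with \cref{lem::k_4}.

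First I verify the identification. Take any vertex $w$ of $K^4$ and split it into three leaves $w_1,w_2,w_3$, attached to the three remaining vertices $x_1,x_2,x_3$. These three vertices still span a triangle, and each has exactly one leaf. Likewise, splitting $v$ in $G$ gives leaves $v_1,v_2,v_3$ attached to the neighbours $u_1,u_2,u_3$ of $v$.

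Next I perform the identifications in the definition of injection. Identify $v_i$ with $x_i$ and $w_i$ with $u_i$, then suppress the identified vertices. The result joins $u_i$ to $x_i$ by a single edge for each $i$. The remaining structure is $G-v$ together with the triangle $x_1x_2x_3$, where $x_i$ is adjacent to $u_i$. This is precisely the blow-up of $v$ into a triangle, and the graph is cubic. Because $K^4$ is vertex-transitive and the labelling of the $x_i$ can be chosen freely, every way of assigning the neighbours $u_i$ to the triangle vertices arises this way.

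Then I apply the two results. By \cref{lem::k_4}, $K^4$ has an $\SSS^2$-flow $h$, and by hypothesis $G$ has an $\SSS^2$-flow $g$. Applying \cref{thm::injection_thm} with $H=K^4$ gives an $\SSS^2$-flow on $K^4\vartriangleright G$, which is the blown-up graph.

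The main obstacle is checking that the flow produced in \cref{thm::injection_thm} is consistent on the three connecting edges $u_ix_i$. On the $G$-side each such edge carries $g$, and on the $H$-side it should carry the rotated value $\theta\circ h$ of the corresponding edge at $w$. I need $\theta$ to map $\{h(wx_i)\}$ onto $\{g(vu_i)\}$ respecting the labelling, with orientations chosen consistently via \cref{reversing}.

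The rotation $\theta$ can be chosen to do this. Both triples are coplanar with pairwise angles $2\pi/3$ by \cref{coplanar}. A rotation (possibly composed with a reflection, which is still an orthogonal linear map and preserves both unit length and KCL) maps one triple onto the other with any prescribed bijection. With that choice, the flow value on each edge $u_ix_i$ is well defined and KCL holds at every vertex. Alternatively, one could argue directly: equip the triangle with $\theta\circ h$ and check KCL at the $x_i$ by linearity.
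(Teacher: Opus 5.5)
Your proposal is correct and is the paper's argument: the paper's proof is just the observation that blowing up $v$ into a triangle is the same as injecting $K^4$ at $v$, followed by an appeal to \cref{lem::k_4} and \cref{thm::injection_thm}. Your consistency check on the edges $u_ix_i$ addresses a step the paper's proof of \cref{thm::injection_thm} glosses over. Note that $vu_i$ and $wx_i$ become opposite orientations of the same edge, so the orthogonal map must send $h(wx_i)$ to $-g(vu_i)$. This is always achievable, even by a proper rotation, since the image of the normal to the plane can be chosen with either sign.
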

\begin{proof}
    Blowing up a vertex $v$ in $V(G)$ into a triangle is equivalent to injecting $K^4$ into $v$. By \cref{lem::k_4} and \cref{thm::injection_thm}, the conclusion follows.
\end{proof}


\section{An Algebraic Approach of \texorpdfstring{$\SSS^2$}{S2}-Flows}\label{proof_of_algebraic_thm}

\citet{VecAndInt} proved the following:

\begin{lem}{\rm \cite[Theorem 1.10]{VecAndInt}}
If a graph $G$ admits a vector $\SSS^1$-flow with rank at most two, then $G$ admits a $3$-flow.
\end{lem}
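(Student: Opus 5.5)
The plan is to turn the vector flow into an integer flow that is constant on each class $E_i$. I will look for an integer weight vector $\mathbf z=(z_1,\dots,z_b)$ and set $f(e):=z_i$ for $e\in E_i$. At a vertex $v$ the net outflow of $f$ is $\sum_i \epsilon_i(v)z_i=\langle \bm\epsilon(v),\mathbf z\rangle$. So $f$ is a $\ZZ$-circulation exactly when $\mathbf z\in S_{\mathbb Q}(\varphi)^{\perp}$. By Tutte's theorem and \cref{group_order}, it suffices to find $\mathbf z\in\mathbb F_3^{\,b}$ with every $z_i\neq 0$ and $\langle \bm\epsilon(v),\mathbf z\rangle\equiv 0 \pmod 3$ for all $v$. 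That gives a $\ZZ_3$-flow, hence a $3$-flow.

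\textbf{Normalisation.} By \cref{reversing} I first re-orient edges so that no two values $\bm v_i,\bm v_j$ in $\calV$ are antipodal: whenever $\varphi(e)=-\bm v_i$, reverse $e$ and replace its value by $\bm v_i$. The key observation is that Kirchhoff's law gives $\sum_i\epsilon_i(v)\bm v_i=\mathbf 0$, so $S_{\mathbb Q}(\varphi)$ lies in the kernel $K$ of the linear map $L\colon\mathbb Q^b\to\RR^2$, $\mathbf x\mapsto\sum_i x_i\bm v_i$. Now $K$ contains no nonzero vector of support $\le 2$. A vector $a\mathbf e_i$ would force $\bm v_i=\mathbf 0$. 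A vector $a\mathbf e_i+c\mathbf e_j$ with $a,c\neq0$ would force $\bm v_i=\pm\bm v_j$ with $\bm v_i\ne\bm v_j$, i.e.\ an antipodal pair, which we have excluded. Hence every nonzero vector of $S:=S_{\mathbb Q}(\varphi)$ has support at least $3$.

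\textbf{Reduction to a finite linear-algebra problem.} Let $\Lambda:=S\cap\ZZ^b$, a saturated lattice of rank $r\le 2$, and fix a $\ZZ$-basis $\mathbf s^1,\mathbf s^{r}$ (or $\mathbf s^1,\mathbf s^2$). Every $\bm\epsilon(v)$ is an integer combination of this basis. So it suffices to find a nowhere-zero $\mathbf z\in\mathbb F_3^b$ orthogonal to $\bar{\mathbf s}^1$ and $\bar{\mathbf s}^2$, their reductions mod $3$. I will attack this with the Combinatorial Nullstellensatz / Alon--Tarsi counting. Consider $P(\mathbf z)=\bigl(1-\langle\bar{\mathbf s}^1,\mathbf z\rangle^2\bigr)\bigl(1-\langle\bar{\mathbf s}^2,\mathbf z\rangle^2\bigr)$, of degree $4$, evaluated on $\{\pm1\}^b$. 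Equivalently, count the solutions $\mathbf z\in\{\pm1\}^b$ by characters. The number of nowhere-zero solutions equals
\[
3^{-r}\sum_{\mathbf w\in \bar\Lambda}\;\prod_{i=1}^b\bigl(\omega^{w_i}+\omega^{-w_i}\bigr),\qquad \omega=e^{2\pi i/3}.
\]
Each factor is $2$ if $w_i=0$ and $-1$ otherwise. Thus the $\mathbf w=\mathbf 0$ term is $2^b$, while each of the at most $8$ other terms has modulus $2^{\,b-|\supp(\mathbf w)|}$. If every nonzero $\mathbf w\in\bar\Lambda$ has support $\ge 4$, then the total is at least $2^b(1-8/16)>0$ and we are done. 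Supports of size $3$ need a finer look at signs and at the actual values.

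\textbf{Main obstacle.} The difficulty is that reduction mod $3$ can shrink supports. A vector of $\Lambda$ with support $\ge3$ over $\mathbb Q$ may become a vector of support $\le 2$ in $\bar\Lambda$, for instance when a coordinate is divisible by $3$. The support-$3$ case also does not follow from crude counting. To control both, I would use the geometry of $S^1$ together with $r\le 2$. A rational vector in $K$ of support $3$ says that three unit vectors in $\RR^2$ have a rational linear dependence. Combined with the rank bound, I expect this to force the relevant $\bm v_i$ into a configuration such as the cube roots of unity, where $\bm\epsilon(v)$ has entries in $\{0,\pm1,\pm2,\pm3\}$ with controlled parity. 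Failing that, I would choose the mod-$p$ reduction more cleverly: take $\mathbf z=(\langle\bm v_i,\mathbf u\rangle)_i$ for a generic $\mathbf u\in\RR^2$, which is a nowhere-zero real vector in $S^\perp$. Then approximate it by a rational vector of $S^{\perp}$ in a $2$-dimensional rational family, scaled so that its entries lie in $\{\pm1,\pm2\}$. I expect this case analysis to be where the real work of the proof lies.
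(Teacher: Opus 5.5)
The paper gives no proof of this lemma; it is quoted from \cite{VecAndInt}. So your proposal has to be judged on its own terms, and it has a genuine gap.

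It is unfinished: your counting argument closes only when every nonzero vector of $\bar\Lambda$ has support at least $4$. More seriously, the framework cannot be completed, because demanding an integer flow that is constant on every class $E_i$ is too strong. Take $\bm v_1=(1,0)$, $\bm v_2=(-\tfrac16,\tfrac{\sqrt{35}}6)$, $\bm v_3=(-\tfrac16,-\tfrac{\sqrt{35}}6)$, so that $\bm v_1+3\bm v_2+3\bm v_3=\mathbf 0$. Let $G$ have two vertices $u,w$ and seven parallel edges oriented $u\to w$: one carries $\bm v_1$, three carry $\bm v_2$ and three carry $\bm v_3$. (Subdivide each edge if you want a simple graph; the subdivision vertices have $\bm\epsilon=\mathbf 0$.) The balanced vectors are $\pm(1,3,3)$, so the rank is $1$, and the nonzero vectors of $S$ have support $3$, exactly as your normalisation predicts. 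But $\bar\Lambda$ is spanned by $(1,0,0)$. Hence every class-constant $\ZZ_3$-circulation vanishes on $E_1$, and your character sum is $\tfrac13(8-4-4)=0$. Your fallback fails too: an integer $\mathbf z\perp(1,3,3)$ has $z_1=-3(z_2+z_3)$, which is either $0$ or at least $3$ in absolute value. Nevertheless $G$ has the $3$-flow $1,1,1,1,-1,-1,-2$, which is not constant on classes. The example also refutes your expectation that a support-$3$ rational relation forces a cube-roots-of-unity configuration. Any positive integer triple satisfying the strict triangle inequality is realised by three pairwise non-parallel unit vectors.

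What is missing is a mechanism that lets the integer flow vary inside a class. One option is to find a \emph{real} class-constant $\mathbf z\in S^\perp$ with $1\le|z_i|\le2$, then round it to an integer $3$-flow using integrality of circulations. The rounding step is precisely what breaks class-constancy. For a fixed sign vector $\sigma$, a solution $\mathbf z=\sigma\circ\mathbf t$ with $\mathbf t\in[1,2]^b$ exists if and only if $|\langle\sigma,\mathbf s\rangle|\le\tfrac13\|\mathbf s\|_1$ for every $\mathbf s\in S$.

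In rank $1$ this can be arranged. From $\sum_i s_i\bm v_i=\mathbf 0$ with unit $\bm v_i$ you get $|s_j|\le\sum_{i\ne j}|s_i|$, so $\supp(\mathbf s)$ splits into two parts, each carrying at least a third of $\|\mathbf s\|_1$. In the example this gives $\mathbf z=(3,3,-4)$, i.e.\ values in $[1,\tfrac43]$ after scaling.

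In rank $2$, however, a single $\sigma$ must serve a whole plane of relations, and this can fail. Take $\bm v_1=(1,0)$, $\bm v_2=\tfrac1{29}(20,21)$, $\bm v_3=(0,1)$, $\bm v_4=\tfrac1{29}(-21,20)$, with $S=K$ realised by two bundles of parallel edges whose balanced vectors are $\pm(8,1,-9,12)$ and $\pm(21,0,-20,29)$. Then $S^\perp$ consists of the vectors $(\langle\bm v_i,\mathbf u\rangle)_i$. None of these has all entries within a factor $2$ of one another, because the four lines form two perpendicular pairs offset by about $46^\circ$. In that example a class-constant $\ZZ_3$-flow, namely $(1,1,1,1)$, does exist instead. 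So neither class-constant device suffices on its own, and the rank-$2$ case needs an idea your proposal does not supply.
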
 

In this section, we generalize the previous result to $\SSS^2$ under a mild assumption. 
Let ${\pmb{\varphi}}$ be a  $\SSS^{2}$–flow on a graph $G$. Its set of flow
values is
\[
  \{\pmb{\varphi}(e)\mid e\in E(G)\}\;=\;\{\pm{\bm v}_{1},\dots,\pm{\bm v}_{b}\},
\]
where $\{{\bm v}_{1},\dots,{\bm v}_{b}\}\subseteq \SSS^{2}$ consists of
$b$ pairwise linearly independent vectors. Without loss of generality we may further assume that
$\{\pmb{\varphi}(e)\mid e\in E(G)\} = \{{\bm v}_{1},\dots,{\bm v}_{b}\}$,
because whenever an edge $e$ satisfies $\pmb{\varphi}(e)=-{\bm v}_{i}$ we simply reverse the orientation of $e$ and replace its flow value by $-\!\pmb{\varphi}(e)={\bm v}_{i}$.
For a vertex $v$, define the \defin{balanced vector} $\boldsymbol{\epsilon}(v) \in \mathbb{Z}^b$ as:
\[
\boldsymbol{\epsilon}(v) = \left( \epsilon_1(v), \dots, \epsilon_b(v) \right),
\]
where $\epsilon_i(v) = |E^+(v) \cap E_i| - |E^-(v) \cap E_i|$, and $E_i$ is the set of edges with flow value ${\bm v}_i$.
The \defin{balanced equation} at a vertex $ v $ is defined as
\begin{equation} \label{eq:balanced}
\epsilon_1(v){\bm v}_1 + \cdots + \epsilon_b(v){\bm v}_b = 0,
\end{equation}

Let $ S(\varphi) $ denote the linear subspace of $ \mathbb{R}^b $ generated by all balanced vectors of the flow $\varphi$ over $\mathbb R$. 
The \defin{rank} of this subspace, denoted by $ \operatorname{rank}(\varphi) $, is called the \defin{rank of the $ \SSS^2 $-flow} $ \varphi $.

\citet{VecAndInt} proved that $G$ admits a nowhere-zero
integer $3$-ﬂow if $G$ admits a vector $\SSS^1$-ﬂow with rank at most two. 
This result is sharp since there are examples that admit vector $\SSS^1$-ﬂows with rank at least 3, but no nowhere-zero integer $3$-ﬂows.

Let $V=\mathbb{Q}^{\,b}$ be the vector space over $\mathbb Q$. We define $\mathdefin{S_{\mathbb{Q}}(\varphi)}$ to be the linear subspace of $\mathbb{Q}^{\,b}$ generated by all balanced vectors. We call a subspace $W$ of $V$ an \defin{odd-coordinate-free} if it contains no integer vector with exactly one odd coordinate.

The following lemma is known as the finite-dimensional Riesz representation theorem.
\begin{lem}{\rm\cite[Sec.8.3 Theorem 6]{hoffman}}\label{lem:Riesz}
If $(V,\langle\,\ast,\ast\rangle)$ is a finite-dimensional inner product space over a field $\mathbb F$ and $f$ is a linear functional on $V$ (i.e $f\colon V\to \mathbb F$), then there exists a unique vector $\beta\in V$ such that $f(\alpha)=\langle \alpha,\beta\rangle$ for all $\alpha\in V$.
\end{lem}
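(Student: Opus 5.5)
The plan is to prove existence and uniqueness using only nondegeneracy of the inner product and a dimension count. This avoids Gram--Schmidt, which needs square roots and so is unavailable over a general field such as $\mathbb{Q}$, the case used later for $S_{\mathbb{Q}}(\varphi)$. Let $V^{*}$ denote the space of linear functionals $V\to\mathbb F$. For each $\beta\in V$ define $f_\beta\colon V\to\mathbb F$ by $f_\beta(\alpha):=\langle\alpha,\beta\rangle$. Linearity of the inner product in its first argument shows that $f_\beta\in V^{*}$.

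Next consider the map $T\colon V\to V^{*}$, $\beta\mapsto f_\beta$. Over a real field such as $\mathbb{Q}$ or $\mathbb{R}$ the inner product is symmetric and bilinear, so $T$ is linear. In the Hermitian case $T$ is only conjugate-linear, but it still respects sums and sends subspaces to subspaces, so the dimension argument below goes through unchanged.

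The key step is injectivity of $T$. Suppose $f_\beta=0$. Then $\langle\alpha,\beta\rangle=0$ for all $\alpha$, and taking $\alpha=\beta$ gives $\langle\beta,\beta\rangle=0$. Positive definiteness (or, more generally, nondegeneracy) then forces $\beta=\mathbf 0$.

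Since $V$ is finite-dimensional, $\dim V^{*}=\dim V$. The image $T(V)$ is a subspace of $V^{*}$, and injectivity gives $\dim T(V)=\dim V=\dim V^{*}$. Hence $T(V)=V^{*}$, so $T$ is surjective. Therefore every linear functional $f$ equals $f_\beta$ for some $\beta$, which is the existence claim.

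Uniqueness is exactly injectivity again. If $\langle\alpha,\beta\rangle=\langle\alpha,\beta'\rangle$ for all $\alpha$, then $\langle\alpha,\beta-\beta'\rangle=0$ for all $\alpha$. Choosing $\alpha=\beta-\beta'$ gives $\langle\beta-\beta',\beta-\beta'\rangle=0$, hence $\beta=\beta'$.

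The only delicate point is making sure the argument never needs an orthonormal basis; the dimension count above sidesteps this. Alternatively, over $\mathbb{R}$ or $\mathbb{C}$ one could take an orthonormal basis $e_1,\dots,e_n$ and set $\beta=\sum_i\overline{f(e_i)}\,e_i$, then check the identity on basis vectors. I would use the dimension argument because it is field-independent.
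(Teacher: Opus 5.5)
The paper gives no proof of its own here. It cites Hoffman--Kunze, where one takes an orthonormal basis $\alpha_1,\dots,\alpha_n$ and sets $\beta=\sum_j\overline{f(\alpha_j)}\,\alpha_j$. One then checks that $f$ and $\langle\cdot,\beta\rangle$ agree on the basis, and gets uniqueness from $\langle\beta-\gamma,\beta-\gamma\rangle=0$.

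Your route is correct and genuinely different. You show that $\beta\mapsto\langle\cdot,\beta\rangle$ is an injective (conjugate-)linear map $V\to V^{*}$, then get surjectivity from $\dim V^{*}=\dim V$. You give up the explicit formula for $\beta$, but you need no orthonormal or even orthogonal basis, only nondegeneracy of the form.

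That extra generality is what the paper actually needs, though not for the reason you give. The lemma is not applied over $\mathbb{Q}$. It is applied inside the proof of Lemma~\ref{Zauber_lem}, to the dot product on $\mathbb{Z}_2^{|Z_{\mathbf{x}}|}$. That form is not an inner product in Hoffman--Kunze's sense at all, since they work only over $\mathbb{R}$ and $\mathbb{C}$.

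In that setting one step of yours needs rewording. In both the injectivity and uniqueness arguments you specialize to $\alpha=\beta$ (resp.\ $\alpha=\beta-\beta'$) and deduce $\beta=\mathbf{0}$ from $\langle\beta,\beta\rangle=0$. That is anisotropy, and it fails over $\mathbb{Z}_2$: $(1,1)\cdot(1,1)=0$. Nondegeneracy cannot be applied to the single equation $\langle\beta,\beta\rangle=0$. It must be applied to the whole family $\langle\alpha,\beta\rangle=0$ for all $\alpha$; for the standard dot product on $\mathbb{F}^n$, taking $\alpha=\mathbf{e}_i$ gives $\beta_i=0$. With that change your proof covers the paper's use.

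Two side remarks. Gram--Schmidt needs square roots only to normalize, so over $\mathbb{Q}$ an orthogonal basis with $\beta=\sum_i f(e_i)\,e_i/\langle e_i,e_i\rangle$ would also work. And for the standard dot product the standard basis is already orthonormal over every field, so the cited formula reduces directly to $\beta=(f(\mathbf{e}_1),\dots,f(\mathbf{e}_n))$.
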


If $V$ is a vector space over a field $\mathbb F$, a \defin{hyperspace} in $V$ is a maximal proper subspace of $V$.

\begin{lem}{\rm\cite[Sec.3.6 Theorem 19]{hoffman}}\label{lem:kernel-hyperspace}
If $f$ is a non-zero linear functional on the vector space $V$ over a field $\mathbb F$, then the kernel of $f$ is a hyperspace in $V$. 
Conversely, every hyperspace in $V$ is the kernel of a (not unique) non-zero linear functional on $V$.
\end{lem}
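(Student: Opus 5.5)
Since the statement is purely linear-algebraic, I would prove it directly from the definitions, without assuming finite-dimensionality. The common idea in both directions is to pick a single vector outside the relevant subspace and show that this subspace together with the line spanned by that vector fills all of $V$.

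For the forward direction, let $f$ be a non-zero linear functional and set $N:=\ker f$. Since $f\neq 0$, choose $\alpha\in V$ with $f(\alpha)\neq 0$; then $\alpha\notin N$, so $N$ is a proper subspace. For an arbitrary $\beta\in V$ I would use the decomposition
\[
\beta=\Bigl(\beta-\tfrac{f(\beta)}{f(\alpha)}\alpha\Bigr)+\tfrac{f(\beta)}{f(\alpha)}\alpha ,
\]
whose first summand lies in $N$ by linearity. This gives $V=N+\mathbb F\alpha$. Maximality then follows. Let $U$ be a subspace with $N\subsetneq U$ and pick $\gamma\in U\sm N$. Then $f(\gamma)\neq0$, and the same decomposition with $\gamma$ in place of $\alpha$ shows $V=N+\mathbb F\gamma\subseteq U$, so $U=V$. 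Hence $N$ is a hyperspace.

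For the converse, let $W$ be a hyperspace and pick $\alpha\in V\sm W$, which exists because $W$ is proper. The subspace $W+\mathbb F\alpha$ strictly contains $W$, so by maximality it equals $V$. Thus every $\beta\in V$ can be written as $\beta=w+c\alpha$ with $w\in W$ and $c\in\mathbb F$. I define $f(\beta):=c$.

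The step needing care is that this $f$ is well defined, i.e.\ that the decomposition is unique. Suppose $w+c\alpha=w'+c'\alpha$ with $c\neq c'$. Then $\alpha=(w'-w)/(c-c')\in W$, a contradiction. So the decomposition is unique, and linearity of $f$ follows by adding and scaling decompositions, again using uniqueness. Finally, $f(\alpha)=1$, so $f$ is non-zero, and $f(\beta)=0$ exactly when $\beta\in W$, so $\ker f=W$.

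Non-uniqueness of $f$ is clear, since any non-zero scalar multiple of $f$ has the same kernel. No part of the argument is a real obstacle; the only subtle point is the well-definedness of $f$ just checked.
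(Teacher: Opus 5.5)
Your proof is correct. The paper gives no argument of its own and only cites Hoffman--Kunze, and your proof is essentially the one given there: pick any $\alpha$ outside the subspace, obtain $V=N+\mathbb F\alpha$ (respectively $V=W+\mathbb F\alpha$ with unique coefficients), and define $f(\beta)$ as the coefficient of $\alpha$.

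The one slip is your justification of the parenthetical ``not unique''. Rescaling by a non-zero $\lambda\neq 1$ needs $|\mathbb F|>2$. Over $\mathbb F=\ZZ_2$, which is exactly where the paper applies this lemma, the functional with kernel $W$ is in fact unique, because it must equal $1$ on every vector outside $W$. So that clause should be read as ``not necessarily unique''. This is harmless, since only the existence part is used later.
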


Although the following result is known, we provide a proof for completeness and the reader’s convenience.

\begin{lem}\label{lem:max_noit_include_1}
Let $V$ be a finite-dimensional vector space over $\mathbb F$ with a proper subspace $W$. 
If $v\notin W$, then there exists a hyperspace(maximal subspace) $M$ with
$W\subseteq M \subsetneq V$ and  $v\notin M$.
\end{lem}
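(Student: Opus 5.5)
Since $v\notin W$, the vectors of a basis of $W$ together with $v$ are linearly independent. I will extend this set to a basis of $V$ and define the hyperspace directly from that basis. Since $V$ is finite-dimensional, this avoids any Zorn-type argument.

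\textbf{Step 1 (basis).} Fix a basis $w_1,\dots,w_k$ of $W$. The set $\{w_1,\dots,w_k,v\}$ is linearly independent. Indeed, in a nontrivial dependence the coefficient of $v$ must be nonzero, because the $w_i$ are independent; but then $v\in\operatorname{span}(w_1,\dots,w_k)=W$, a contradiction. By the standard basis-extension theorem in finite dimensions, extend this set to a basis
\[
\{w_1,\dots,w_k,v,u_1,\dots,u_m\}
\]
of $V$.

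\textbf{Step 2 (define $M$).} Let $M:=\operatorname{span}(w_1,\dots,w_k,u_1,\dots,u_m)$. This span omits exactly one basis vector of $V$, so $\dim M=\dim V-1$.

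\textbf{Step 3 (properties of $M$).}
\begin{itemize}
\item $W\subseteq M$ holds by construction.
\item $v\notin M$, since otherwise the basis in Step 1 would be linearly dependent. In particular $M\subsetneq V$.
\item $M$ is maximal among proper subspaces: any subspace strictly containing $M$ has dimension $\dim V$, and so equals $V$. Hence $M$ is a hyperspace.
\end{itemize}

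Equivalently, let $f$ be the coordinate functional for $v$ with respect to this basis. Then $M=\ker f$ and $f(v)=1\neq 0$, which matches the characterisation of hyperspaces as kernels of nonzero functionals in \cref{lem:kernel-hyperspace}.

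There is no real obstacle here. The only point needing care is checking that $v$ together with a basis of $W$ is independent, which is the one place the hypothesis $v\notin W$ is used. I do not expect to need \cref{lem:Riesz} for this lemma.
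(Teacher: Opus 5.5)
Your proof is correct and takes essentially the same route as the paper: it extends a basis of $W$ together with $v$ to a basis of $V$, then takes $M=\ker f$ for the coordinate functional of $v$, which is exactly your span of the remaining basis vectors. The only difference is that the paper cites the kernel--hyperspace lemma to conclude that $M$ is a hyperspace, while you verify maximality directly by a dimension count; both are valid.
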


\begin{proof}
Let $\{w_1,\dots,w_k\}$ be a basis of $W$. 
Since $v\notin W$, the set $\{w_1,\dots,w_k,v\}$ is linearly independent and can be extended to a basis of $V$,
say $\mathcal B=\{w_1,\dots,w_k, v, u_{k+2},\dots,u_n\}$.
Define $f\in V^*$ by prescribing its values on $\mathcal B$:
\[
f(w_i)=0 \ (1\le i\le k), \qquad f(v)=1, \qquad
f(u_j)=0 \ (k+2\le j\le n),
\]
and extend $f$ linearly to all of $V$. 
Since $f$ vanishes on $W$, we conclude that $W$ is subset of the kernel of $f$.
Furthermore by definition $f(v)=1$ which implies $v\notin \ker(f)$. 
Now it follows from \Cref{lem:kernel-hyperspace} that $\ker(f)$ is a hyperspace.
\end{proof}

Next we define the support of a vector.
Let $\xx=(x_1,\dots,x_n)$ be a vector in a vector space $V$. 
The \defin{support} of $x$ is
$\supp(\xx)\;\coloneqq \;\{\, i\in[n] \mid x_i\neq 0 \,\}$.

It is a well-known fact that in any binary linear code, either all codewords have even weight, or exactly half of them do. 

\begin{lem}\label{lem:num_odd_supp}
A $k$-dimensional binary subspace $V\subseteq\mathbb{Z}_2^n$ has either $0$ or exactly $2^{k-1}$ vectors of odd support.\qed
\end{lem}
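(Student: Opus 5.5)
The statement is \cref{lem:num_odd_supp}: a $k$-dimensional binary subspace $V\subseteq\mathbb{Z}_2^n$ contains either no vectors of odd support or exactly $2^{k-1}$ of them. Here "odd support" means $|\supp(\xx)|$ is odd, i.e.\ $\xx$ has odd Hamming weight. The plan is to express the parity of the weight as a linear functional and apply the kernel/hyperspace dichotomy of \cref{lem:kernel-hyperspace}.

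First I define $f\colon\mathbb{Z}_2^n\to\mathbb{Z}_2$ by $f(\xx)=\sum_{i=1}^n x_i \pmod 2$. Since each coordinate of $\xx$ is $0$ or $1$, $f(\xx)$ equals $|\supp(\xx)| \bmod 2$. So $\xx$ has odd support exactly when $f(\xx)=1$. The map $f$ is visibly linear over $\mathbb{Z}_2$, so its restriction $f|_V$ is a linear functional on $V$.

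Next I split into two cases.
\begin{itemize}
\item If $f|_V$ is identically zero, then every vector of $V$ has even support, so $V$ has $0$ vectors of odd support.
\item Otherwise $f|_V$ is a nonzero functional into the one-dimensional space $\mathbb{Z}_2$, so it is surjective. By rank--nullity its kernel $K$ has dimension $k-1$; this is also the hyperspace conclusion of \cref{lem:kernel-hyperspace}. Hence $|K|=2^{k-1}$.
\end{itemize}

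The vectors of odd support are exactly $f|_V^{-1}(1)$. This set is the nontrivial coset $\yy+K$ for any $\yy\in V$ with $f(\yy)=1$. Therefore it has $|K|=2^{k-1}$ elements, and equivalently $|V|-|K|=2^k-2^{k-1}=2^{k-1}$.

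The only point needing care, mild as it is, is justifying that the weight parity is linear. This holds because over $\mathbb{Z}_2$ the sum of the coordinates counts the nonzero entries modulo $2$. Everything else is standard linear algebra over a finite field. The case $k=0$ is trivial, since then $V=\{\mathbf 0\}$ and falls into the first case.
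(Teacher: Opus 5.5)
Your proof is correct and complete. The parity map $\mathbf{x}\mapsto\sum_i x_i$ is a linear functional detecting odd support, so on $V$ it is either identically zero or onto $\mathbb{Z}_2$ with a kernel of size $2^{k-1}$, whose nontrivial coset is exactly the set of odd-support vectors. The paper itself gives no proof and simply invokes the well-known fact that a binary linear code has either all codewords of even weight or exactly half of odd weight. Your argument is the standard justification of that fact, built on the paper's own kernel--hyperspace lemma, and it covers the degenerate case $k=0$ as well.
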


Let $(V, \langle \ast, \ast \rangle)$ be a vector space equipped with an inner product.  
If $W$ is a subspace of $V$, it is \emph{not} necessarily true that  
$\dim(W^\perp) + \dim(W) = \dim(V)$, where  $W^\perp$ is the orthogonal complement of $W$. However, if the inner product is \emph{non-degenerate}, then the equality $\dim(W^\perp) + \dim(W) = \dim(V)$ does hold. In particular, the standard inner product on $\mathbb{Z}_2^n$ is a non-degenerate. The following lemma plays a crucial role in this section.

\begin{lem}\label{Zauber_lem}
Let $V \subseteq \ZZ_2^n$ be a linear subspace of dimension $d$ with $n - 2 \le d \le n$. If for each coordinate $i = 1, \dots ,n$, the projection map $\pi_i \colon V \to \ZZ_2$, is surjective. Then there exist two vectors ${\mathbf{x}}, {\mathbf{y}}  \in V$ such that $(\pi_i(\xx),\pi_i(\yy))\neq (0,0)$ for every $i$.
\end{lem}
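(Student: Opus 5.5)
We argue by a counting argument over $V$, since $V$ has codimension at most $2$ in $\ZZ_2^n$. For each coordinate $i$, let $H_i:=\ker(\pi_i|_V)$. Because $\pi_i$ is surjective on $V$, $H_i$ is a hyperspace of $V$ (\cref{lem:kernel-hyperspace}), so $|H_i|=2^{d-1}$. A pair $(\xx,\yy)\in V\times V$ fails the required condition exactly when there is some $i$ with $\xx,\yy\in H_i$. It therefore suffices to show
\[
\Bigl|\bigcup_{i=1}^n H_i\times H_i\Bigr|<|V|^2=4^d .
\]

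The naive union bound gives $n\cdot 4^{d-1}$, which is only below $4^d$ when $n<4$. So we first merge coordinates whose kernels coincide. Let $\mathcal H=\{H_i: 1\le i\le n\}$ be the set of \emph{distinct} hyperspaces that occur; the failing set is $\bigcup_{H\in\mathcal H} H\times H$. Each $H\in\mathcal H$ is the kernel of the functional $\pi_i|_V$. Two coordinates $i,j$ give the same hyperspace exactly when $\pi_i|_V=\pi_j|_V$, because over $\ZZ_2$ a nonzero functional is determined by its kernel. Thus $|\mathcal H|$ equals the number of distinct nonzero functionals among the restrictions $\pi_i|_V$.

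The next step is to use $d\ge n-2$ to bound $|\mathcal H|$. Set $V^\perp:=\{\mathbf{u}\in\ZZ_2^n: \langle \mathbf{u},\mathbf{v}\rangle=0 \ \forall \mathbf{v}\in V\}$. Non-degeneracy gives $\dim V^\perp=n-d\le 2$. We have $\pi_i|_V=\pi_j|_V$ iff $e_i+e_j\in V^\perp$, and surjectivity of $\pi_i$ means $e_i\notin V^\perp$. The restrictions $\pi_i|_V$ live in $V^*\cong \ZZ_2^n/V^\perp$. Hence the number of distinct ones is the number of distinct cosets $e_i+V^\perp$, none of which is $V^\perp$ itself. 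This alone does not bound $|\mathcal H|$ by a constant, so the counting must use more structure. The key point is that the failing set is $\bigcup_H H\times H$, and $(\xx,\yy)$ lies in it iff some coordinate vanishes on both. Equivalently, for the pair viewed as a vector in $V^2\subseteq \ZZ_2^{2n}$, we need the $i$-th column $(x_i,y_i)$ to be nonzero for all $i$.

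The plan is therefore a probabilistic/averaging argument with inclusion–exclusion over at most a bounded number of "types". Take $\xx,\yy$ uniform in $V$. For each $i$, $(x_i,y_i)$ is uniform on $\ZZ_2^2$, so it is zero with probability $1/4$. Dependencies among coordinates come only from $V^\perp$, which has at most $3$ nonzero vectors. The approach is to partition the coordinates into the classes given by the supports of the nonzero vectors of $V^\perp$ and handle three cases:
\begin{itemize}
\item[(a)] $d=n$: take $\xx=\mathbf 1$, and $\yy$ is arbitrary.
\item[(b)] $d=n-1$: $V^\perp=\{0,\mathbf u\}$ with $|\supp(\mathbf u)|\ge2$. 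Choose $\xx\in V$ nonzero on all but at most one coordinate of $\supp(\mathbf u)$ (possible by parity adjustments), and use $\yy$ to cover the leftover coordinate.
\item[(c)] $d=n-2$: $V^\perp=\{0,\mathbf u,\mathbf w,\mathbf u+\mathbf w\}$. The coordinates split into the four Venn regions of $\supp\mathbf u,\supp\mathbf w$. Membership in $V$ imposes only two parity constraints: an even number of ones on $\supp\mathbf u$ and on $\supp\mathbf w$.
\end{itemize}
In case (c) we construct $\xx$ to be $1$ everywhere except on a small set $T$ chosen to fix the two parities, and then construct $\yy$ to be $1$ on $T$, fixing $\yy$'s parities using coordinates outside $T$ where $\xx$ is already $1$.

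The main obstacle is case (c), when some Venn region is tiny: for instance $|\supp\mathbf u|=2$ with $\mathbf u=e_i+e_j$, which forces $x_i=x_j$. Surjectivity (no $e_i\in V^\perp$) rules out singleton supports, and that is what makes the parity fixes possible. I expect a short case check on the region sizes to finish the argument.
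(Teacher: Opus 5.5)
Cases (a) and (b) are fine, and your route differs from the paper's and can be made to work. But as submitted it does not prove the lemma. The counting and ``probabilistic/averaging'' discussion in the first half is abandoned and never used. In case (c), the only case with real content, you stop at ``I expect a short case check'', and that check is the proof.

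The obstacle you name ($|\supp\mathbf u|=2$) is also not the real one. The real difficulty is an empty Venn region. Then no single coordinate fixes the parity defect of $\mathbf 1$, so the zero set $T$ of $\mathbf x$ must have two elements. Surjectivity of the individual $\pi_i$ no longer guarantees a $\mathbf y\in V$ that is $1$ on all of $T$.

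What rescues this is that all three nonzero vectors $\mathbf u,\mathbf w,\mathbf u+\mathbf w$ of $V^\perp$ have support of size at least $2$. With $A=\supp\mathbf u\setminus\supp\mathbf w$, $B=\supp\mathbf w\setminus\supp\mathbf u$, $C=\supp\mathbf u\cap\supp\mathbf w$, this reads
\[
|A|+|C|\ge 2,\qquad |B|+|C|\ge 2,\qquad |A|+|B|\ge 2 .
\]
The last inequality, coming from $\mathbf u+\mathbf w$, is genuinely needed.

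With these, the check goes as follows.
\begin{enumerate}
\item If $|A|+|C|$ and $|B|+|C|$ are both even, take $\mathbf x=\mathbf 1$.
\item If only $|A|+|C|$ is odd and $A\neq\emptyset$, or both are odd and $C\neq\emptyset$, take $\mathbf x=\mathbf 1+e_t$ with $t$ in that region. Take any $\mathbf y\in V$ with $y_t=1$, which exists by surjectivity of $\pi_t$.
\item If only $|A|+|C|$ is odd and $A=\emptyset$, then $|C|\ge 2$ and $|B|\ge 2$. So $\mathbf x=\mathbf 1+e_b+e_c$ and $\mathbf y=e_b+e_{b'}+e_c+e_{c'}$ work.
\item If both are odd and $C=\emptyset$, then $|A|,|B|\ge 2$. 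So $\mathbf x=\mathbf 1+e_a+e_b$ and $\mathbf y=e_a+e_{a'}+e_b+e_{b'}$ work.
\item The case where only $|B|+|C|$ is odd is symmetric.
\end{enumerate}
Once this is written out, you have a constructive, entirely elementary proof that exhibits $\mathbf x,\mathbf y$ explicitly.

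The paper argues by contradiction instead. For each $\mathbf x\in V$, the projection of $V$ onto the zero set $Z_{\mathbf x}$ misses the all-ones vector. By duality, $Z_{\mathbf x}$ therefore contains the support of an odd-weight $\mathbf w\in V^\perp$. Hence $V$ is covered by the subspaces of vectors vanishing on $\supp\mathbf w$, one for each of the at most $2^{k-1}\le 2$ odd-weight vectors of $V^\perp$. Each of these is proper by surjectivity, and a group is never a union of two proper subgroups. That route needs no region bookkeeping and shows exactly where codimension at most $2$ enters; yours makes visible which support conditions are used.
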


\begin{proof}
Let $V^\perp = \{\,w\in \mathbb{Z}_2^{\,n} \mid w\cdot v=0\ \text{for all }v\in V\,\}$ be the orthogonal complement of $V$. Then $\dim(V^\perp)=k=n-d\in\{0,1,2\}$, since $n-2\le d\le n$. The surjectivity of each $\pi_i$ implies that no standard basis $e_i$ lies in $V^\perp$; consequently, every nonzero $w\in V^\perp$ satisfies $|\operatorname{supp}(w)|\ge 2$. Assume, for the sake of contradiction, that the lemma is false. Then we have the following: 
\begin{equation}\label{Zx}
\textit{For any pair of vectors $\xx,\yy \in V$, there exists an index $i\in [n]$ such that $x_i = y_i = 0$.}
\end{equation}
For $x\in V$ define the zero set $Z_x \;=\; \{\, i\in[n] \mid x_i=0 \,\}$.
We now claim the following:
\begin{clm}\label{clm:miracle}
for every $\xx \in V$, there exists a non-zero vector ${\bf w} \in V^\perp$ with odd support such that $\mathsf{supp}({\bf w}) \subseteq Z_{\xx}$.
\end{clm}
\begin{clmproof}
Let us fix an arbitrary vector $\xx\in V$.
Without loss of generality, we can assume that $Z_{\xx}=[|Z_{\xx}|]$.
Our hypothesis, applied to this specific $\xx$, implies that for any vector $\yy \in V$, there is an index $i \in Z_{\xx}$, where $y_i = 0$. This means that no vector in $V$ has a `1' in all coordinate positions of $Z_{\xx}$. 
Next, we consider the projection map from $V$ onto the coordinates in $Z_{\xx}$:
$$\pi_{Z_{\xx}}:  V \to \ZZ_2^{|Z_{\xx}|}\coloneqq \underbrace{\mathbb Z_2\times\cdots\times \mathbb Z_2}_{|Z_{\xx}|}$$ 
Since no vector in $V$ has a `1' in all coordinate positions of $Z_{\xx}$, we infer that  the all-ones vector $\mathbf{1} \in \ZZ_2^{|Z_{\xx}|}$ is not in the image of $\pi_{Z_{\xx}}$ and so the projection $\pi_{Z_{\xx}}$ is not a surjective map.
It follows from \Cref{lem:max_noit_include_1} that there is a hyperspace(maximal subspace) $M$ of $\ZZ_2^{|Z_{\xx}|}$ such that $\mathsf{Im}(\pi_{Z_{\xx}})\subseteq M$ and ${\bf 1}\notin M$.
By \Cref{lem:kernel-hyperspace}, let $f\colon \ZZ_2^{|Z_x|}\to \ZZ_2$ be the linear functional such that the kernel of $f$ is $M$. We now invoke \Cref{lem:Riesz} and conclude that the non-zero linear functional $f$ can be represented by the dot product with a non-zero vector ${\bf u} \in \ZZ_2^{|Z_x|}$. 
By definition of $f$, we have the following two crucial properties:
\begin{enumerate}
    \item ${\bf u} \cdot {\bf v} = 0$ for all ${\bf v} \in \mathsf{Im}(\pi_{Z_{\xx}})$, as $\mathsf{Im}(\pi_{Z_{\xx}})$ is a subset of the kernel of $f$.
    \item ${\bf u} \cdot \mathbf{1} = 1$, as $\bf 1 $ does not belong to kernel of $f$. Hence it implies that $|\mathsf{supp}({\bf u})|$ is odd.
\end{enumerate}
We extend ${\bf u}$ to a vector ${\bf w} \in \ZZ_2^n$ by setting its components to 0 for all coordinates not in $Z_{\xx}$. 
By construction, $\text{supp}({\bf w}) = \text{supp}({\bf u})$, so $\text{supp}(\bf w) \subseteq Z_{\xx}$ and $|\text{supp}({\bf w})|$ is odd. 


The first condition saying that ${\bf u} \cdot {\bf v} = 0$ for all ${\bf v} \in \mathsf{Im}(\pi_{Z_{\xx}})$.
By extending the vector ${\bf u}$ to the vector ${\bf w}$, we deduce that ${\bf w} \cdot \yy = 0$ for all $\yy \in V$, which means ${\bf w} \in V^\perp$ and the claim is proved.
\end{clmproof}

Let $ V^\perp_{odd} = \{{\bf w} \in  V^\perp \sm \{0\} \mid |\mathsf{supp}({\bf w})| \text{ is odd}\}$. 
For each ${\bf w} \in V^\perp_{odd}$, define the subspace $V_{\bf w} \coloneqq \{{\bf v} \in V \mid \text{supp}({\bf v}) \cap \text{supp}({\bf w}) = \emptyset\}$. 
We now claim the following:
\begin{clm}\label{union}
$$ V = \bigcup_{{\bf w} \in V^\perp_{odd}} V_{\bf w} $$
\end{clm}
\begin{clmproof}
It follows from \Cref{clm:miracle} that for every ${\bf v} \in V$, there exists a non-zero vector ${\bf w} \in V^\perp$ with odd support such that $\mathsf{supp}({\bf w}) \subseteq Z_{\bf v}$.
Recall that by definition, $Z_{\bf v}$ is the set of indices $i \in [n]$ for which the $i$-th coordinate of ${\bf v}$ is zero. In particular, this means that $\mathsf{supp}({\bf v}) \cap Z_{\bf v} = \emptyset$. Hence, we conclude that ${\bf v} \in V_{\bf w}$.
\end{clmproof}
By \Cref{lem:num_odd_supp}, we know that the number of vectors in $V^\perp_{odd}$ is either 0 or $2^{k-1}$, where $k=\dim(V^\perp)$. 
\begin{itemize}
    \item \textbf{Case $k=0$ ($d=n$):} $V^\perp=\{0\}$, and so $V^\perp_{odd}$ is empty and $V=\Z_2^n$.
    \Cref{union} yields a contradiction.
    \item \textbf{Case $k=1$ ($d=n-1$):} $|V^\perp_{odd}| \le 2^{1-1} = 1$. The covering becomes $V = V_{\bf w}$, where $\bf w$  lies in $V^\perp_{odd}$.
    This means that the support of each vector of $V$ has empty intersection with $\mathsf{supp}(\bf w)$.
    In other words, all vectors in $V$ are zero on $\mathsf{supp}(\bf w)$, which violates the surjectivity of the projection maps $\pi_i$ for $i \in \mathsf{supp}(\bf w)$.
    \item \textbf{Case $k=2$ ($d=n-2$):} $|V^\perp_{odd}| \le 2^{2-1} = 2$. The covering becomes $V = V_{w_1} \cup V_{w_2}$ which gives us a contradiction. 
\end{itemize}

Since every possible case arising from our initial assumption leads to a contradiction, the assumption must be false. Therefore, there must exist two vectors $\xx, \yy \in V$ such that $\text{supp}(\xx) \cup \text{supp}(\yy) = \{1, \dots, n\}$.
\end{proof}

\begin{mythm}{\ref{rank_odd_cor}}
If $G$ admits an $\SSS^2$-flow such that $S_{\mathbb{Q}}(\varphi)$ is odd-coordinate-free of rank at most $2$, then $G$ admits a $4$-flow.
\end{mythm}
\begin{proof}
Let $f$ be an $\mathbb{S}^2$-flow on a graph $G$ with rank $r \leq 2$. 
The set of flow values is $\{{\bf v}_1, \dots, {\bf v}_b\} \subseteq \mathbb{S}^2$. By definition, for any vertex $v \in V(G)$, its balanced vector $\epsilon(v) = (\epsilon_1(v), \dots, \epsilon_b(v))$ satisfies the KCL for the $\mathbb{S}^2$-flow:
$\sum_{i=1}^b \epsilon_i(v) {\bf v}_i = 0$.
Our goal is to construct a nowhere-zero 4-flow. 
By a classic result of Tutte, this is equivalent to constructing a nowhere-zero flow using the Klein four-group, $\mathbb{Z}_2 \times \mathbb{Z}_2$. 
A function $\varphi\colon E(G) \to \mathbb{Z}_2 \times \mathbb{Z}_2 \sm \{\bf 0\}$ is a nowhere-zero $\ZZ_2\times \ZZ_2$-flow if the KCL holds at every vertex. Since every element in $\ZZ_2\times \ZZ_2$ is its own inverse, the KCL simplifies to:
\[
\sum_{e \text{ incident to } v} \varphi(e) = 0 \quad \text{(in $\ZZ_2\times \ZZ_2$)}
\]

We shall define a flow $\varphi$ by assigning a value $g_i \in \mathbb Z_2\times \mathbb Z_2 \sm\{\bf 0\}$ to all edges with the flow vector ${\bf v}_i$. 
That is, if an edge $e$ has $f(e) = {\bf v}_i$, we set $\varphi(e) = g_i$. The KCL for $\varphi$ at a vertex $v$ is then:
\[
\sum_{i=1}^b \left(|E^+(v) \cap E_i| - |E^-(v) \cap E_i|\right) g_i = \sum_{i=1}^b \epsilon_i(v) g_i = 0
\]
Let $\pmb\epsilon'(v) = (\epsilon_1(v) \bmod{2}, \dots, \epsilon_b(v) \bmod{2}) \in (\mathbb{Z}_2)^b\coloneqq \underbrace{\mathbb Z_2\times\cdots\times \mathbb Z_2}_{b\textit{ times}}$. The condition becomes:
\[
\sum_{i=1}^b  {\bf \epsilon'}_i(v) g_i = 0 \quad \Leftrightarrow \quad  {\bf \epsilon'}(v) \cdot (g_1, \dots, g_b) = 0 \quad \text{(in $\mathbb Z_2\times \mathbb Z_2$)}
\]
This equation must hold for all $v \in V(G)$. Let $S' \subseteq (\mathbb{Z}_2)^b$ be the subspace spanned by all vectors $\{ {\bf \epsilon'}(v)\}$. 
We know that  $k = \dim_{\mathbb{Z}_2}(S')\le \dim _{\mathbb Q}S(f)\le 2$.
Furthermore we can represent elements of $\mathbb Z_2\times \mathbb Z_2$ as vectors in $(\mathbb{Z}_2)^2$. Let $g_i = (x_i, y_i)$, where $(x_i, y_i) \in (\mathbb{Z}_2)^2 \sm \{(0,0)\}$. The vector equation $\sum  {\bf \epsilon'}_i(v) g_i = 0$ decomposes into two independent linear equations over $\mathbb{Z}_2$:
\[
 {\pmb\epsilon'}(v) \cdot (x_1, \dots, x_b) = 0
\]
\[
 {\pmb\epsilon'}(v) \cdot (y_1, \dots, y_b) = 0
\]
These must hold for all $v$. This is equivalent to requiring the vectors ${\bf x} = (x_1, \dots, x_b)$ and ${\bf y} = (y_1, \dots, y_b)$ to lie in the orthogonal complement of $S'$, denoted $W = (S')^\perp$. The dimension of this subspace is $d = \dim(W) = b - \dim(S') = b - k$. Since $k \leq 2$, we have $d \geq b - 2$.
Our task reduces to finding two vectors ${\bf x}, {\bf y} \in W$ such that for every index $i \in \{1, \dots, b\}$, the pair $(x_i, y_i)$ is not the zero vector $(0,0)$.
In order to invoke \Cref{Zauber_lem}, we need to show that the projection onto each coordinate is surjective.
 
\begin{clm}\label{clm:proj}
We claim that the projections $\pi_1, \dots, \pi_{b}$ are surjective.
\end{clm}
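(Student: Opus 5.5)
We must show that for each coordinate $i\in\{1,\dots,b\}$ the projection $\pi_i\colon W\to\ZZ_2$ is surjective, where $W=(S')^\perp$. Since $\pi_i$ is linear onto a one-dimensional space, it fails to be surjective exactly when every vector of $W$ vanishes at coordinate $i$. We will show that this would force the standard basis vector $e_i$ to lie in $S'$, and then contradict the odd-coordinate-free hypothesis.

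The first step is linear algebra over $\ZZ_2$. If $\pi_i$ is identically zero on $W$, then $e_i\cdot w=0$ for all $w\in W$, so $e_i\in W^\perp=((S')^\perp)^\perp$. Because the standard inner product on $\ZZ_2^b$ is non-degenerate (as recalled just before \Cref{Zauber_lem}), we have $\dim W^\perp=b-\dim W=\dim S'$. Since $S'\subseteq W^\perp$ trivially, this gives $W^\perp=S'$, and therefore $e_i\in S'$.

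The second step lifts this to $\mathbb{Q}$. Since $e_i\in S'$, there are vertices $u_1,\dots,u_m$ with
\[
\sum_{j}\pmb\epsilon'(u_j)=e_i \quad\text{in }\ZZ_2^b.
\]
Take $\mathbf{z}=\sum_j\pmb{\epsilon}(u_j)\in\ZZ^b$, which is an integer vector lying in $S_{\mathbb{Q}}(\varphi)$. Reducing modulo $2$ gives $\mathbf{z}\bmod 2=e_i$, so $\mathbf{z}$ has exactly one odd coordinate, namely the $i$-th. This contradicts the assumption that $S_{\mathbb{Q}}(\varphi)$ is odd-coordinate-free. Hence every $\pi_i$ is surjective.

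The only real obstacle is the double-complement identity $W^\perp=S'$ over $\ZZ_2$, where orthogonal complements can behave unexpectedly because self-orthogonal vectors exist. The dimension count from non-degeneracy is enough, however, because the inclusion $S'\subseteq (S')^{\perp\perp}$ always holds. We should also note that the lifting step uses integer combinations of the $\pmb\epsilon(u_j)$, each with coefficient $1$, so $\mathbf{z}$ is genuinely an integer vector in $S_{\mathbb{Q}}(\varphi)$, which is exactly the form the hypothesis rules out.
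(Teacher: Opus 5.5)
Your proposal is correct and follows the paper's proof essentially step for step. A non-surjective $\pi_i$ forces $e_i\in W^\perp=S'$, and lifting the mod-$2$ combination to the integer vector $\sum_j\pmb\epsilon(u_j)\in S_{\mathbb{Q}}(\varphi)$ gives a vector with exactly one odd coordinate, contradicting odd-coordinate-freeness; your dimension-count justification of $W^\perp=S'$ via non-degeneracy fills in a step the paper simply asserts "by definition".
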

\begin{clmproof}
Assume for the sake of contradiction that for some $ j \in \{1, \ldots, b\} $, the map $ \pi_j $ is not surjective. 
This implies it must be the zero map and so for any vector $ \xx \in W $, its $ j $-th coordinate is zero. 
This implies that every vector in $ W $ is orthogonal to the vector ${\bf e}_j $. 
By definition, this forces $ {\bf e}_j \in W^\perp = S' $ and so 
it can be written as a sum of some "mod 2" balanced vectors $  \pmb\epsilon'(u) $.
More precisely, we have $${{\bf e}_j}=\sum_{u\in V(G)}c_u{\pmb\epsilon'}(u)$$
We lift the equation from $\mathbb{Z}_2$ to the integers $\mathbb{Z}$.  
Let $\mathbf{z}$ be the integer vector defined by
\[
\mathbf{z} = \sum_{u \in V(G)} c_u\, \boldsymbol{\epsilon}(u).
\]
By definition, $\mathbf{z}$ is an integer linear combination of balanced vectors.  
Observe that $\mathbf{e}_j \equiv \mathbf{z} \pmod{2}$, which implies that $z_j \equiv 1 \pmod{2}$ and $z_i \equiv 0 \pmod{2}$ for every $i \neq j$ which violates the assumption ``odd-coordinate-free".
So the claim is proved.
\end{clmproof}
By \Cref{clm:proj}, we know that each projection $\pi_j$ is surjective.  
Next, we invoke \Cref{Zauber_lem} to obtain two vectors $\xx, \yy \in \mathbb{Z}^b$ such that $(\pi_i(\xx), \pi_i(\yy)) \neq (0,0)$ for every $i \in [b]$.  
We now define $g_i \coloneqq (\pi_i(\xx), \pi_i(\yy))$.  
It then follows from the discussion at the beginning of the proof that $\{g_i\}_{i \in [b]}$ forms a nowhere-zero $\mathbb{Z}_2 \times \mathbb{Z}_2$-flow.
\end{proof}

\section{\texorpdfstring{$\SSS^n$}{S-n}-Flows for \texorpdfstring{$n\geq 3$}{n>=3}}

\begin{thm}\label{H_i}
Let $G$ be a graph that can be decomposed as the union of subgraphs $H_1, \ldots, H_n$ such that each edge of $G$ belongs to exactly $l$ of these subgraphs. Suppose that each subgraph $H_i$ admits an $\SSS^{j_i}$-flow for $i = 1, \ldots, n$. Then $G$ admits an $$\SSS^{\sum_{i=1}^n j_i + n - 1}\text{-flow}$$
\end{thm}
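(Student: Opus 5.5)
Fix one orientation $\vec{G}$ of $G$ and give each $H_i$ the induced orientation. For each $i$ let $\varphi_i\colon E(\vec{H_i})\to\SSS^{j_i}\subseteq\RR^{j_i+1}$ be an $\SSS^{j_i}$-flow; by \cref{reversing} it may be taken with respect to this induced orientation. Extend $\varphi_i$ by zero to all of $E(\vec{G})$. Since $H_i$ is a subgraph of $G$ and the extension is zero off $E(H_i)$, the extended map is an $\RR^{j_i+1}$-circulation on $\vec{G}$: at every vertex the sums in \cref{KL} only see edges of $H_i$.

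Set $N:=\sum_{i=1}^n (j_i+1)=\sum_i j_i+n$, so that $\RR^N=\bigoplus_{i=1}^n\RR^{j_i+1}$ is an orthogonal direct sum. Define
\[
\Phi(e):=\frac{1}{\sqrt{l}}\bigl(\varphi_1(e),\varphi_2(e),\dots,\varphi_n(e)\bigr)\in\RR^N .
\]

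Kirchhoff's law for $\Phi$ holds coordinate-block by coordinate-block, because each block is a circulation and scaling by $1/\sqrt{l}$ preserves this. For the norm, fix an edge $e$. Exactly $l$ of the blocks $\varphi_i(e)$ are unit vectors and the rest are $\mathbf{0}$. Orthogonality of the blocks then gives
\[
\|\Phi(e)\|^2=\frac{1}{l}\sum_i\|\varphi_i(e)\|^2=\frac{l}{l}=1 .
\]
So $\Phi$ is nowhere zero and unit-valued, that is, an $\SSS^{N-1}$-flow. Since $N-1=\sum_i j_i+n-1$, this is the claimed flow.

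The only delicate point is the hypothesis: the normalisation needs each edge to lie in exactly $l\ge 1$ subgraphs, which is precisely what is given. One also checks that the orientation convention is consistent across the $H_i$, which \cref{reversing} handles. I expect no real obstacle beyond this bookkeeping. The construction is the standard direct-sum-and-rescale trick, and the main thing to verify carefully is that the zero-extension of each $\varphi_i$ really satisfies \cref{KL} at vertices of $G$ outside $H_i$ or incident to edges outside $H_i$, which is immediate.
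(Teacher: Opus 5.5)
Your proof is correct and takes essentially the same approach as the paper. Both concatenate the zero-extended flows $\varphi_i$ into the orthogonal direct sum $\RR^{\sum_i j_i+n}$, rescale by $1/\sqrt{l}$, and make the orientations consistent via \cref{reversing}. You additionally spell out the norm computation $\|\Phi(e)\|^2=\frac{1}{l}\cdot l=1$, which the paper leaves implicit.
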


\begin{proof}
We begin by orienting the edges of $G$ arbitrarily. For each $i \in \{1, \ldots, n\}$, let $f_i$ denote an $\SSS^{j_i}$-flow on $H_i$.
Define the flow on $G$ by assigning to each edge $e \in E$ the vector
\[
\frac{1}{\sqrt{l}}(u_1, \ldots, u_n) \in \SSS^{\sum_{i=1}^n j_i + n - 1},
\]
where each component $u_k$ is given by:
\[
u_k\coloneqq \begin{cases}
0 & \text{if } e \notin H_k, \\
f_k(e) & \text{if } e \in H_k.
\end{cases}
\]
If the orientation of $e$ in $H_k$ disagrees with its orientation in $G$, we replace $u_k$ with $-u_k$ to ensure consistency.
Since each $H_i$ admits an $\SSS^{j_i}$-flow, the resulting vector assigned to each edge satisfies the flow conservation condition at each vertex. Therefore, $G$ admits an $\SSS^{\sum_{i=1}^n j_i + n - 1}$-flow.
\end{proof}

As an immediate consequence of~\Cref{H_i}, we obtain the following corollary.
If the graph $G$ is bridgeless, then, as shown in , there exists a collection of seven even subgraphs such that each edge of $G$ lies in exactly four of them. It follows that every bridgeless graph admits a flow with values in the 6-dimensional unit sphere 
In other words, if $G$ is bridgeless, then it admits an $\SSS^6$-flow, as shown in \cite{thom2}.

\begin{cor}
If $G$ is a bridgeless graph, then the following holds:
\begin{compactenum}
\item $G$ admits an $\SSS^6$-flow {\rm\cite{ber}}.
\item Under the Berge–Fulkerson conjecture (\cite{seymour1979multi,fulkerson1971blocking}),  every bridgeless cubic graph admits an $\SSS^5$-flow.
\item Under the Celmins and Preissmann conjecture, every bridgeless graph admits an $\SSS^4$-flow.
\end{compactenum}
\end{cor}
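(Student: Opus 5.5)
The plan is to take the three items one at a time, each time reducing to \cref{H_i} with a suitable family of subgraphs $H_1,\dots,H_n$ covering every edge exactly $l$ times. Each $H_i$ will be an even subgraph, that is, a subgraph in which every vertex has even degree. By \cref{obs:S0}, such an $H_i$ has an $\SSS^0$-flow, so $j_i=0$. The conclusion of \cref{H_i} then gives an $\SSS^{n-1}$-flow on $G$, where $n$ is the number of even subgraphs in the family. One caveat: \cref{H_i} is stated with $\SSS^{j_i}$-flows on the $H_i$, but the construction only requires each $H_i$ to carry a $\pm1$-valued circulation on its own edges. The coordinate $u_k$ is then $\pm1$ or $0$, each edge has exactly $l$ nonzero coordinates, and after scaling by $1/\sqrt{l}$ we get a unit vector. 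Kirchhoff's law holds coordinatewise because each coordinate is a circulation on $G$ that vanishes off $H_k$.

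For item (1), I would invoke the known theorem that every bridgeless graph has $7$ even subgraphs covering each edge exactly $4$ times. This follows from Fan's or Jaeger's $7$-cycle-cover results, ultimately via Jaeger's $8$-flow theorem: $G$ has a $\ZZ_2^3$-flow, and the $7$ nonzero linear functionals $\ZZ_2^3\to\ZZ_2$ pull back to $7$ even subgraphs. For each nonzero $a\in\ZZ_2^3$, exactly $4$ of the $7$ nonzero functionals are nonzero on $a$, so every edge is covered exactly $4$ times. Applying \cref{H_i} with $n=7$, $l=4$ and $j_i=0$ gives an $\SSS^6$-flow.

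For item (2), the Berge--Fulkerson conjecture provides six perfect matchings of a bridgeless cubic graph covering each edge exactly twice. The complements of these matchings are six $2$-factors. Since each edge lies in exactly $2$ matchings, it lies in exactly $6-2=4$ of the complements. Each $2$-factor is an even subgraph, so \cref{H_i} with $n=6$, $l=4$ gives an $\SSS^5$-flow.

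For item (3), I would use the Celmins--Preissmann conjecture, which asserts that every bridgeless graph has a $5$-cycle double cover. This gives $5$ even subgraphs covering each edge exactly twice, and \cref{H_i} with $n=5$, $l=2$ gives an $\SSS^4$-flow.

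The main obstacle is pinning down the exact covering statements with the right parameters, in particular the exact-multiplicity form of the $7$-even-subgraph cover in (1). If that form is only available as an ``at least'' statement, I would rederive the exact version from the $\ZZ_2^3$-flow as described above. The remaining steps are direct substitutions into \cref{H_i}.
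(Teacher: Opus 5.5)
Your proposal is correct. Items (2) and (3) are exactly the paper's argument. For item (1), however, you take a different route from the paper's written proof, though it is the route the paragraph before the corollary alludes to.

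The paper starts from a nowhere-zero $\ZZ_6$-flow $\varphi$ (Seymour's $6$-flow theorem) and splits it into mod-$2$ and mod-$3$ parts:
\begin{itemize}
\item $H_1$, the support of $\varphi \bmod 2$, is an even subgraph and carries an $\SSS^0$-flow.
\item $H_2,H_3,H_4$ are the supports of $\varphi_3$, $\varphi_3+\chi$, $\varphi_3-\chi$, where $\varphi_3=\varphi\bmod 3$ and $\chi$ is a $\pm1$ circulation along a cycle decomposition of $H_1$. These carry $\ZZ_3$-flows, hence $\SSS^1$-flows by \cref{cubic}.
\end{itemize}
An edge outside $H_1$ has $\varphi(e)\in\{2,4\}$ and lies in all of $H_2,H_3,H_4$. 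An edge of $H_1$ lies in exactly two of them, because the values $a,a\pm1$ exhaust $\ZZ_3$. So $l=3$, and \cref{H_i} with $(j_1,\dots,j_4)=(0,1,1,1)$ gives $\SSS^{3+4-1}=\SSS^6$. (The paper's text says to shift along the cycles of $H_2$; the shift must be along the cycles of $H_1$ for this count to hold.)

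Your version has three advantages. It needs only Jaeger's $8$-flow theorem, which is a weaker input than Seymour's. It uses only $\SSS^0$-pieces, so all three items are handled uniformly. And your functional count ($7-3=4$) makes the exact multiplicity immediate.

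The paper's version uses only four subgraphs. It also shows \cref{H_i} at work with $j_i>0$, with the $\SSS^1$-pieces coming from $\ZZ_3$-flows.

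Your caveat about \cref{H_i} is unnecessary: a $\pm1$-valued circulation on $H_k$ is precisely an $\SSS^0$-flow, so the theorem applies verbatim.
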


\begin{proof}
$ $
\begin{compactenum}
\item By \cite[Theorem 6.6.1]{diestel2017graph}, every bridgeless graph admits a nowhere-zero $\ZZ_6$-flow $\varphi$.
Define the subgraph $H_1 \coloneqq \{e \in E \mid \varphi(e) \not\equiv 0 \pmod{2}\}$. 
Then $H_1$ admits a $\ZZ_2$-flow, and by~\Cref{obs:S0}, it follows that $H_1$ admits an $\SSS^0$-flow and is a disjoint union of cycles.
Next, define $H_2 \coloneqq \{e \in E \mid \varphi(e) \not\equiv 0 \pmod{3}\}$. 
Modify the flow $\varphi$ by adding $1$ to each cycle in $H_2$ in the clockwise direction to obtain a new nowhere-zero flow $\psi$. 
Define $H_3 \coloneqq \{e \in E \mid \psi(e) \not\equiv 0 \pmod{3}\}$.
Repeat this process once more, this time adding $1$ to each cycle in the counterclockwise direction, and define $H_4 = \{e \in E \mid \psi(e) \not\equiv 0 \pmod{3}\}$.
Observe that $H_2$, $H_3$, and $H_4$ each admit a $\ZZ_3$-flow. 
By~\Cref{cubic}, it follows that each of these subgraphs also admits an $\SSS^1$-flow.
It is straightforward to verify that every edge of $G$ lies in exactly three of the subgraphs $H_1, H_2, H_3, H_4$. 
Therefore, applying~\Cref{H_i}, we conclude that $G$ admits an $\SSS^6$-flow, as claimed.

\item The conjecture stating that $G$ has six cycles such that every edge appears in exactly four of them. 

\item The conjecture stating that five cycles covering each edge exactly twice.
\end{compactenum}
\end{proof}

\section{Further Research}

In this paper, we studied unit vector flows with respect to the Euclidean norm for finite graphs. 
A natural question is what happens when we switch from the Euclidean norm to other $L_p$ norms on $\mathbb{R}^n$, or more generally, to an arbitrary norm on $\mathbb{R}^n$. One may even take a further step and investigate unit vector flows over other vector spaces, such as the $p$-adic integers.

Furthermore flow theory has also been developed for infinite graphs, see \cite{MR3621723} and using those tools we can extend the results of this paper to infinite graphs.
Last but not least, we close the paper with the following question:

\begin{enumerate}
    \item Can we drop the condition ``odd-free-coordinate" in \Cref{rank_odd_cor}?
\end{enumerate}



\bibliographystyle{plainurlnat}
\bibliography{uvf}

\end{document}